\newtheorem{thm}{Theorem}[section]
\newtheorem{cor}[thm]{Corollary}
\newtheorem{lem}[thm]{Lemma}
\newtheorem{prop}[thm]{Proposition}
\theoremstyle{definition}
\newtheorem{defn}[thm]{Definition}
\theoremstyle{remark}
\newtheorem{rem}[thm]{Remark}
\newtheorem{ex}[thm]{Example}
\newcommand{\Hl }{\{ H_1, \ldots, H_m\} }
\renewcommand{\d }{{\rm dist}}
\newcommand{\dx }{\d_X}
\newcommand{\Lab}{\mathrm{Lab}}
\newcommand{\Con}{{\mathrm{Con}}}
\newcommand{\e }{\varepsilon }
\newcommand{\CG }{{\rm Con} ^{\omega } (G, d)}
\newcommand{\lio }{\mathrm{lim}^\omega }
\newcommand{\oas}{$\omega $--almost surely}
\newcommand{\T }{\Theta}
\newcommand{\lqt}{$\mathcal Q$-tree }
\newcommand{\lqst}{$\mathcal Q$-subtree\ }
\newcommand{\pp}{{\mathcal P}}
\newcommand{\qq}{{\mathcal Q}}
\newcommand{\la}{\langle}
\newcommand{\ra}{\rangle}
\newcommand{\rrr}{{\mathfrak r}}
\newcommand{\cgx}{\mathrm{Cay} (G,X)}
\newcommand{\G }{\mathrm{Cay} (G,X\cup\mathcal H)}
\newcommand{\lm}{{\lim}}
\newcommand{\dist}{{\mathrm{dist}}}
\newcommand{\Cay}{\mathrm{Cay}}
\newcommand {\Z}{\mathbb{Z}}            
\newcommand {\R}{\mathbb{R}} 
\newcommand {\free}{\mathbb{S}} 
\newcommand{\freg}{\mathbb{G}} 
\newcommand {\q}{\mathfrak q} 
\newcommand {\g}{\mathfrak g} 
\newcommand{\p}{\mathfrak p} 
\newcommand {\pgot}{\mathfrak p}
\newcommand {\iv}{^{-1}}
\newcommand{\ttt}{{\mathcal T}}
\newcommand{\di}{\mathrm{div}}
\begin{document}

\title[Universal tree-graded spaces]{Universal tree-graded spaces and asymptotic cones}
\author{Denis Osin}\thanks{The research of the first author was supported in part by
NSF grant  DMS-1006345.}
\author{Mark Sapir}\thanks{The research of the second author was supported in part by NSF grant DMS-0700811.}
\address{Department of Mathematics,
Vanderbilt University, Nashville, TN 37240, U.S.A.}
\email{denis.v.osin@vanderbilt.edu}

\email{m.sapir@vanderbilt.edu} \subjclass[2000]{{Primary 20F65;
Secondary 20F69, 20F38, 22F50}} \keywords{tree-graded space,
tree-product, relatively hyperbolic group, asymptotic cone}

\maketitle

\begin{abstract} We define and give explicit construction of a universal tree-graded space with a given collection of pieces. We apply that to proving uniqueness of asymptotic cones of relatively hyperbolic groups whose peripheral subgroups have unique asymptotic cones. Modulo the Continuum Hypothesis, we show that if an asymptotic cone of a geodesic metric space is homogeneous and has cut points, then it is the universal tree-graded space whose pieces are maximal connected subsets without cut points. Thus it is completely determined by its collection of pieces.
\end{abstract}


\section{Introduction}


Tree-graded spaces were introduced by Dru\c tu and the second author
in \cite{DS}. These are geodesic metric spaces equipped with a
collection of connected subsets, called pieces, such that every two
pieces intersect by at most one point and every simple closed loop
is contained in one piece. It turned out that tree-graded spaces
occur very often as asymptotic cones of groups. In fact every
geodesic metric space with cut points  has a natural tree-graded
structure: the pieces are maximal connected subspaces which have no
their own cut points \cite[Lemma 2.31]{DS}. (Recall that a point $s$
of a geodesic metric space $\free $ is a {\it cut point} if
$\free\setminus \{ s\} $ is not path connected).

It is proved in \cite[Section 3]{DMS} that a group has cut points in
its asymptotic cones if and only if the divergence function of this
group is super-linear. Asymptotic cones of relatively hyperbolic
groups \cite{DS}, mapping class groups \cite{Beh}, many right angled
Artin groups \cite{BC} have natural tree-graded structures. In
addition, it is shown in \cite[Section 7]{DS} that any
``sufficiently nice" metric space (e.g., compact and locally
contractible) appears as a piece in the tree-graded structure of
some asymptotic cone of a finitely generated group. In \cite[Section
4]{OOS}, it is proved that asymptotic cones of lacunary hyperbolic
groups given by presentations satisfying certain forms of small
cancelation have tree-graded asymptotic cones.

Existence of a tree-graded structure in asymptotic cones of a group $G$ can be used to prove some rigidity results. The approach is based of a similarity between the theory of groups acting on tree-graded spaces and that of groups acting on $\mathbb R$-trees. In particular, it helps answering questions about the number of pairwise non-conjugate homomorphisms from a group $\Gamma$ into $G$ \cite{DS1}, the size of $\mathrm{Out}(G)$, and whether $G$ is Hopfian or co-Hopfian \cite{DS1}. In \cite{BDS, BDS1}, the tree-graded structure of asymptotic cones of mapping class groups helped proving that a group with property (T) (and even much weaker properties) has only finitely many pairwise non-conjugate homomorphisms into a given mapping class group.

In this paper, we use the tree-graded structure of asymptotic cones to address the question about uniqueness of asymptotic cones of a group. It is now well known \cite{TV} that a finitely generated group may have several non-homeomorphic asymptotic cones. The same is true for finitely presented groups \cite{OlshSap}. In \cite{KSTT}, it is shown that provided the Continuum Hypothesis (CH) is true, the total number of non-isometric asymptotic cones of an arbitrary finitely generated group is at most continuum, while if CH is not true, there exists a finitely presented group with $2^{2^{\aleph_0}}$ pairwise non-homeomorphic asymptotic cones. There exists a finitely generated group with continuum pairwise non-$\pi_1$-equivalent asymptotic cones \cite[Theorem 7.37]{DS} regardless whether CH is true or not. On the other hand, all asymptotic cones of a finitely generated nilpotent group are bilipschitz equivalent and even isometric if the set of generators is fixed \cite{Pansu, Br}, and the asymptotic cones of all
non-elementary hyperbolic groups are isometric to the universal $\R$-tree, that is the complete $\R$-tree with branching number continuum at every point \cite{MNO}. An explicit construction of that tree is presented in \cite{EP}.

In order to present the main results of the paper, we need the following definition.

\begin{defn} \label{def1} Let $\qq$ be a collection of geodesic metric spaces. We say that a geodesic metric space $\free$ is a
{\it $\qq $-tree} if $\free$ is tree-graded \cite[Section 1]{DS} with respect to pieces isometric to  elements of $\qq$. If elements of $\qq$ are without cut points, then $\qq$-trees are precisely the geodesic metric spaces $\free$ where every maximal
connected subset of $\free$ without cut points is isometric to some $Q\in \qq$. \footnote{One can also define a category of $\qq$-trees in a natural way, including the collection of pieces as a part of the structure of objects and defining morphisms as maps preserving the structure, but that is not used in this paper.}
\end{defn}

Let us point out that all metric spaces considered in this paper are of sizes at most continuum and we shall sometimes omit this condition.

We say that a $\qq$-tree is {universal} if for every point $s\in
\free$, the cardinality of the set of connected components of
$\free\setminus \{s\}$ of any given {\em type} (for the definition
of type see Section \ref{5}) is continuum. This notion generalizes
the notion of universal $\mathbb R$-trees studied by Mayer, Nikiel,
and Oversteegen  \cite{MNO} as well as Erschler and Polterovich
\cite{EP}, where pieces are points and all connected components of
$\free\setminus \{s\}$ are of the same type. A discrete version of
$\qq$-trees was also studied by Quenell \cite{Q}.

\begin{thm} \label{main1} Let $\mathcal Q$ be a collection of homogeneous complete geodesic metric spaces. Then the following hold.
\begin{enumerate}
\item There exists a universal $\qq $-tree.

\item Every $\qq $-tree of cardinality at most continuum embeds into a universal $\qq$-tree.

\item Every two universal $\mathcal Q$-trees are isometric.
\end{enumerate}
\end{thm}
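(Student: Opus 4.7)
All three parts admit a unified approach based on transfinite iterations of length $\mathfrak c$; the substantive ingredients are homogeneity of the elements of $\qq$ (to allow the gluing point on a freshly attached copy of $Q$ to be chosen arbitrarily) and the standard fact from \cite{DS} that attaching a piece along a single point preserves the tree-graded structure. For \textbf{(1)}, I construct $\free$ by transfinite recursion starting from any $\free_0 \in \qq$. Using a bookkeeping bijection from $\mathfrak c$ onto the set of triples $(s, \tau, \beta)$ (with $s$ a point of the intermediate space, $\tau$ a type, $\beta < \mathfrak c$), arranged so that each task is scheduled after the point $s$ first appears, I execute the $\alpha$-th task at stage $\alpha+1$ by attaching a fresh copy of some $Q\in\qq$ of type $\tau$ to $\free_\alpha$ at $s$ (homogeneity of $Q$ makes the choice of gluing point on $Q$ immaterial); at limit ordinals I take direct limits. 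The outcome $\free = \bigcup_{\alpha<\mathfrak c}\free_\alpha$ is a $\qq$-tree of cardinality $\mathfrak c$, and it is universal because every $s\in\free$ appears at some $\alpha_s < \mathfrak c$ and thereafter receives $\mathfrak c$ attachments of each type.

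For \textbf{(2)}, given a $\qq$-tree $T$ of cardinality at most $\mathfrak c$, I enumerate its pieces as $(P_\alpha)_{\alpha<\mathfrak c}$ so that each initial union $U_\alpha := \bigcup_{\beta<\alpha} P_\beta$ is connected, and build an isometric embedding $f: T \to \free$ piece-by-piece. At stage $\alpha+1$, the tree-graded property forces $P_\alpha \cap U_\alpha$ to be a single point $s$; among the $\mathfrak c$ components of the type of $P_\alpha$ at $f(s)$ provided by universality of $\free$, at most $|\alpha| < \mathfrak c$ have been used so far, so a fresh component is available, and homogeneity allows $f$ to be extended isometrically onto $P_\alpha$. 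Limit stages cause no difficulty.

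For \textbf{(3)}, I apply a standard uncountable back-and-forth to two universal $\qq$-trees $\free_1, \free_2$. Enumerate the pieces of each as $\mathfrak c$-sequences and construct an increasing chain of partial piecewise isometries by alternating forth and back moves, at each stage extending by the first unhandled piece on the active side via the same counting argument as in (2), with universality of $\free_2$ (resp.\ $\free_1$) supplying fresh components on the opposite side. After $\mathfrak c$ alternations every piece of both trees lies in the union of the partial isometries, which is therefore an isometry $\free_1 \to \free_2$.

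The main obstacle is the bookkeeping: at every stage $\alpha<\mathfrak c$ one must arrange that only $|\alpha|<\mathfrak c$ extensions have been made at any one point, so that $\mathfrak c$ components of the required type remain available to extend. A secondary subtlety, needed in (3), is to check that the union of the piecewise isometries is surjective onto $\free_2$; this follows from the fact, visible in the construction of (1), that every point of a universal $\qq$-tree lies in some piece, so that an isometry covering all pieces covers the entire space.
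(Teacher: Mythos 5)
There are genuine gaps, and they concern precisely the points where the paper has to work hardest. First, in your construction for (1): the universality conditions (Definition \ref{defu}) demand, at every point $s$ and for every type $\theta$, continuum \emph{limit} connected components of $\free\setminus\{s\}$ of type $\theta$. A limit component is by definition one in which no piece contains a nontrivial initial segment of a geodesic issuing from $s$; attaching a single fresh copy of some $Q\in\qq$ at $s$ only ever produces non-limit components, so your recursion never realizes nontrivial limit types (which require infinite chains of pieces accumulating at $s$, interleaved with transversal-tree segments), nor the trivial type (which requires nondegenerate $\R$-tree components at $s$ meeting every piece in at most one point). Moreover the increasing union $\bigcup_{\alpha<\mathfrak c}\free_\alpha$ is not complete, and the points supplied by its completion are never scheduled for tasks, so universality fails at them. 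The paper avoids all of this by giving an explicit construction, the tree product $\Pi\qq'$ of continuum copies of each $Q\in\qq$ together with the universal $\R$-tree $\ttt$ as an extra building block, and proving completeness and the description of geodesics directly (Lemmas \ref{complete}, \ref{every}, \ref{tgstructure}).

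Second, in (2) and (3) the piece-by-piece scheme breaks down because the pieces of a $\qq$-tree neither exhaust it nor chain together. A general $\qq$-tree (and even the universal one) contains pairs of pieces $P,P'$ whose connecting geodesic must traverse a nondegenerate transversal-tree segment $[s,t]$; since that segment meets every piece in at most one point, no finite chain of pairwise-intersecting pieces joins $P$ to $P'$, so no enumeration with connected initial unions exists, and the induction step ``$P_\alpha\cap U_\alpha$ is a single point'' fails. A general $\qq$-tree may also have points lying in no piece at all, so an embedding defined only on pieces is not defined everywhere, and your surjectivity argument for (3) (``every point lies in some piece, so covering all pieces covers the space'') does not cover the target either unless one separately accounts for the transversal trees. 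Finally, even where the extension is possible, you never verify that it is a \emph{global} isometric embedding (distances between points of different pieces pass through transversal segments). The paper's proof of (2) and (3) is organized around exactly these difficulties: it runs Zorn's lemma over triples $(\freg',\free',f)$ of \emph{good} $\qq$-subtrees with a type-preserving isometric embedding, uses the fact that good subtrees are totally geodesic (Lemma \ref{int}) to control global distances, extends either by a whole piece, by a typed geodesic realizing a limit component, or by a single boundary point (Lemma \ref{comp}), and derives surjectivity in (3) from the type~I~/~type~II dichotomy of points rather than from a counting of pieces. Your back-and-forth idea is in the right spirit, but without the notion of good subtree and without handling limit components and transversal trees it does not go through.
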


In fact, the isometry in part (3) of the theorem preserves the tree
graded structure (i.e., maps pieces to pieces). A priori this is not
obvious since tree graded structures on metric spaces are, in
general, not unique.

We present an explicit construction of the universal $\qq$-tree,
using the notion of the {\em tree product of metric spaces} similar
in spirit to the construction in \cite{EP}.

\begin{thm}\label{main2} \label{rhg-intr}
Let $G$ be a finitely generated relatively hyperbolic group. Then for every non-principal ultrafilter $\omega $ and every scaling sequence $d=(d_i)$, the asymptotic cone $\CG $ is bi-Lipschitz equivalent to the universal $\mathcal Q$-tree, where $\mathcal Q$ consists of asymptotic cones of peripheral subgroups with respect to $\omega $ and $d$.
\end{thm}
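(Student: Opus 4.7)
The plan is to combine the Drutu--Sapir structure theorem for asymptotic cones of relatively hyperbolic groups with the uniqueness part of Theorem \ref{main1}. By \cite{DS}, for any non-principal ultrafilter $\omega$ and scaling sequence $d$, the cone $\CG$ is tree-graded with pieces bi-Lipschitz equivalent to elements of $\mathcal Q$. Performing a piecewise bi-Lipschitz modification of the metric replaces each piece by an isometric copy of the corresponding element of $\mathcal Q$ and produces a $\mathcal Q$-tree $\free$, in the sense of Definition \ref{def1}, that is bi-Lipschitz equivalent to $\CG$. Since asymptotic cones of finitely generated groups are complete, geodesic, and homogeneous, the elements of $\mathcal Q$ satisfy the hypotheses of Theorem \ref{main1}.

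By Theorem \ref{main1}(3), it then suffices to verify that $\free$ is universal: for every $s \in \free$ and every type $T$ (as in Section \ref{5}), the connected components of $\free \setminus \{s\}$ of type $T$ are continuum in number. Since $\CG$ is homogeneous as an asymptotic cone of a group and the bi-Lipschitz modification preserves homogeneity, we may take $s$ to be the base point $s_0$, the ultralimit of the identity $e \in G$. For each peripheral subgroup $H_i$, every sequence $(g_n) \subset G$ with $\d(e, g_n) = o(d_n)$ defines a piece of $\free$ through $s_0$ of type equal to the asymptotic cone of $H_i$, namely the ultralimit of the cosets $g_n H_i$. Two such sequences produce the same piece iff $g_n^{-1} g_n' \in H_i$ $\omega$-almost surely; since the number of $H_i$-cosets meeting a ball of radius $o(d_n)$ in $\Cay(G,X)$ grows super-linearly in $d_n$ (this uses hyperbolicity of $\G$ relative to $\mathcal H$), one can pick continuum many $\omega$-distinct sequences, yielding continuum many distinct pieces through $s_0$ of each type in $\mathcal Q$. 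Iterating this branching along nested sequences of pieces, in parallel with the inductive definition of type in Section \ref{5}, produces continuum many components of any prescribed type.

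The main obstacle is the iterative step: one must align the combinatorial description of a type $T$ (a rooted tree of piece types with gluing data) with the nested-coset construction and rule out unintended coincidences among the resulting components. Granting this, Theorem \ref{main1}(3) identifies $\free$ with the universal $\mathcal Q$-tree up to isometry, so $\CG$ is bi-Lipschitz equivalent to the universal $\mathcal Q$-tree.
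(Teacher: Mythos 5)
Your overall strategy (exhibit $\CG$ as a $\qq$-tree via the Dru\c tu--Sapir definition, reduce to intrinsic cones of the $H_i$ by undistortedness, verify universality, and invoke the uniqueness of universal $\qq$-trees) is the same as the paper's, and your counting of pieces through the basepoint via $\omega$-distinct cosets $g_nH_i$ with $|g_n|=o_\omega(d_n)$ is a workable variant of the paper's argument (which instead lets the hyperbolic elements $(g_1^{n_j})^\omega\in G(\omega,d)$ act and applies Lemma \ref{action}). But there is a genuine gap exactly where you flag "the main obstacle": condition (b) of Definition \ref{defu}, i.e.\ realizing \emph{every} $\qq$-type $\theta$ by continuum many limit connected components at each point. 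A type here is not "a rooted tree of piece types with gluing data" defined inductively; it is an equivalence class of pairs $(U,f)$ where $U$ is a possibly infinite (even densely ordered) disjoint union of intervals in $]0,a]$ recording where a geodesic crosses pieces, and the \emph{complementary} intervals are traversed inside transversal trees, not inside pieces. Your proposed iteration "along nested sequences of pieces" can only produce geodesics that pass from one piece directly into the next, so it realizes neither the trivial type (a geodesic meeting every piece in at most a point) nor any type whose intervals are separated by transversal segments or accumulate.

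Filling this gap is the technical heart of the paper's proof and requires machinery you have not supplied: one enlarges the peripheral structure by maximal elementary subgroups $E(g_1)$, $E(g_2)$ of two hyperbolic elements (Lemma \ref{Eg}), so that limits of cosets of $\la E(g_i)\ra$ are bi-infinite transversal geodesics (Lemma \ref{ttr}); one approximates $(U,f)$ by finite subfamilies $U_m$ and builds explicit words $Z_m$ whose syllables are prefixes of shortest peripheral words (padded by short powers of $g_1$ so that the hypotheses of the uniform quasi-geodesicity criterion, Lemma \ref{o}, hold) on the intervals of $U_m$ and powers of $g_2$ on the complementary intervals; one then proves the $\omega$-limit of these uniformly quasi-geodesic paths is an honest geodesic of type exactly $\tau$ (using Lemma \ref{DS}(c) to compare intersections with pieces), and finally gets continuum many such components from Lemma \ref{action}. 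Your proposal acknowledges this step only conditionally ("Granting this\dots"), so as written the proof is incomplete; the conclusion does follow once this realization-of-types argument is carried out.
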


The following two corollaries follow from Theorem \ref{rhg-intr} and
Lemma \ref{ble}.

\begin{cor}\label{cor1}
Let $G_1$ and $G_2$ be two finitely generated relatively hyperbolic groups with the same set of isomorphism classes of peripheral subgroups. Then for any fixed ultrafilter $\omega $ and scaling sequence $d$, the asymptotic cones $\Con ^\omega (G_1, d)$ is bilipschitz equivalent to $\Con ^\omega (G_1, d)$. In particular, the sets of asymptotic cones of $G_1$ and $G_2$ coincide.
\end{cor}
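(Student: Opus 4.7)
The plan is to combine Theorem \ref{rhg-intr} with Lemma \ref{ble}. For each $i\in\{1,2\}$, Theorem \ref{rhg-intr} identifies $\Con^\omega(G_i,d)$, up to bi-Lipschitz equivalence, with the universal $\qq_i$-tree, where $\qq_i$ is the collection of asymptotic cones (formed with the same $\omega$ and $d$) of the peripheral subgroups of $G_i$. It therefore suffices to show that the universal $\qq_1$-tree and the universal $\qq_2$-tree are bi-Lipschitz equivalent.

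To this end, I would translate the hypothesis into a statement about $\qq_1$ and $\qq_2$. Any two finite generating sets of a finitely generated group produce bi-Lipschitz equivalent word metrics, and an isomorphism of finitely generated groups transports any chosen finite generating set of the source to one of the target; hence each isomorphism between peripheral subgroups becomes a bi-Lipschitz equivalence of Cayley graphs. The asymptotic cone construction preserves bi-Lipschitz equivalence when $\omega$ and $d$ are held fixed, so the assumption that $G_1$ and $G_2$ share the same set of isomorphism classes of peripheral subgroups produces a bijection between $\qq_1$ and $\qq_2$ under which corresponding elements are bi-Lipschitz equivalent. At this point I would invoke Lemma \ref{ble}, which I expect to assert exactly that such a piecewise bi-Lipschitz correspondence of piece collections lifts to a bi-Lipschitz equivalence of the associated universal trees. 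Composing the three bi-Lipschitz equivalences
\[
\Con^\omega(G_1,d) \longleftrightarrow \text{universal } \qq_1\text{-tree} \longleftrightarrow \text{universal } \qq_2\text{-tree} \longleftrightarrow \Con^\omega(G_2,d)
\]
yields the first assertion.

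For the last sentence, note that the argument above is uniform in the choice of $\omega$ and $d$. Since the set of asymptotic cones of $G_i$ is by definition obtained by letting $(\omega,d)$ range over all admissible pairs, and we have produced a bi-Lipschitz equivalence between $\Con^\omega(G_1,d)$ and $\Con^\omega(G_2,d)$ for every such pair, the two sets of asymptotic cones coincide (as sets of bi-Lipschitz equivalence classes). The only real obstacle is bookkeeping: one must verify that Lemma \ref{ble} is indeed phrased in this piecewise-bi-Lipschitz form, and that bi-Lipschitz equivalences of the pieces extend coherently through the tree-product construction used to build the universal tree. Since all of this machinery is developed in the earlier sections, no new conceptual difficulty arises beyond the invocation of Theorem \ref{rhg-intr} and Lemma \ref{ble}.
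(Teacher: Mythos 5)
Your proposal is correct and follows exactly the route the paper takes: the authors simply state that the corollary follows immediately from Theorem \ref{rhg-intr} and Lemma \ref{ble}, which is precisely the chain of bi-Lipschitz equivalences you describe. The only point worth making explicit is that Lemma \ref{ble} requires the bi-Lipschitz constants to be uniformly bounded over the (continuum-sized) collections of pieces, which holds automatically here because there are only finitely many isomorphism classes of peripheral subgroups and each copy of a given piece is matched by the same map.
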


\begin{ex}
Let $G$ be the fundamental group of a hyperbolic knot complement. Then it is hyperbolic relative to a free abelian subgroup of rank $2$ \cite{Gro,F} and all asymptotic cones of $G$ are bi-Lipschitz equivalent to the universal $\{{\mathbb R}^2\}$-tree. The same holds, say, for asymptotic cones of $(\mathbb Z\times \mathbb Z)\ast \mathbb Z$. Similarly, every non-uniform lattice in $\mathrm{SO}(n,1)$ is relatively hyperbolic with respect to finitely generated free Abelian subgroups $\Z^{n-1}$, hence their asymptotic cones are all bi-Lipschitz equivalent to the asymptotic cones of $\Z^{n-1}\ast \Z$ and are bi-Lipschitz equivalent to the universal $\{\R^{n-1}\}$-tree. More generally, every non-uniform lattice $\Gamma$ in a rank 1 semi-simple Lie group  is relatively hyperbolic with respect to nilpotent subgroups \cite{Gro,F}. The asymptotic cones of a nilpotent group are homeomorphic to $\R^k$ for some $k$ \cite{Pansu}. Hence every asymptotic cone of $\Gamma$ is homeomorphic to the universal $\{\R^{k}\}$-tree, where $k+1$ is the dimension of the associated rank one symmetric space.
\end{ex}

We say that a finitely generated group $G$ is {\it homeo-unicone}
(respectively {\em bi-Lipschitz-unicone}) if all its asymptotic
cones are homeomorphic (respectively bi-Lipschitz equivalent). For
example, all nilpotent groups and hyperbolic groups are
bi-Lipschitz-unicone. The fact that some non-nilpotent solvable
groups (for example, SOL) are bi-Lipschitz-unicone is proved in
\cite[Section 9]{deC}.

\begin{cor}\label{cor2}
If a finitely generated group $G$ is hyperbolic relative to a collection of homeo-unicone (respectively, bi-Lipschitz-unicone) subgroups, then $G$ is homeo-unicone (respectively, bi-Lipschitz-unicone).
\end{cor}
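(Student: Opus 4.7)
The plan is to deduce the corollary directly from Theorem~\ref{rhg-intr} combined with Lemma~\ref{ble}, by showing that the asymptotic cones of $G$ depend (up to the desired equivalence) only on the equivalence classes of the asymptotic cones of the peripheral subgroups.

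Concretely, I would fix two pairs $(\omega_1,d_1)$ and $(\omega_2,d_2)$ consisting of non-principal ultrafilters and scaling sequences, and set $\mathcal{A}_i=\Con^{\omega_i}(G,d_i)$. Let $H_1,\ldots,H_m$ be the (finitely many) peripheral subgroups, which by hypothesis are all homeo-unicone (resp. bi-Lipschitz-unicone). Applying Theorem~\ref{rhg-intr} to each pair, I get that $\mathcal{A}_i$ is bi-Lipschitz equivalent to the universal $\mathcal{Q}_i$-tree $T_i$, where
\[
\mathcal{Q}_i \;=\; \{\,\Con^{\omega_i}(H_j,d_i)\,:\,1\le j\le m\,\}.
\]
The unicone hypothesis applied to each $H_j$ says that $\Con^{\omega_1}(H_j,d_1)$ and $\Con^{\omega_2}(H_j,d_2)$ are homeomorphic (resp. bi-Lipschitz equivalent). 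Thus $\mathcal{Q}_1$ and $\mathcal{Q}_2$ are in bijection via the natural matching of pieces coming from the same peripheral subgroup, with matched pieces being equivalent in the appropriate sense.

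Next, I would invoke Lemma~\ref{ble}, which is precisely designed to transport a piece-wise equivalence between two collections to a global equivalence between the corresponding universal trees: if $\mathcal{Q}_1$ and $\mathcal{Q}_2$ are collections of homogeneous complete geodesic metric spaces related by a bijection under which corresponding pieces are homeomorphic (resp. bi-Lipschitz equivalent), then any universal $\mathcal{Q}_1$-tree is homeomorphic (resp. bi-Lipschitz equivalent) to any universal $\mathcal{Q}_2$-tree. Applying this gives $T_1\sim T_2$, and concatenating $\mathcal{A}_1\sim T_1\sim T_2\sim\mathcal{A}_2$ yields the conclusion. In the bi-Lipschitz case the three equivalences are all bi-Lipschitz and compose, while in the homeo case one uses that bi-Lipschitz equivalence implies homeomorphism to reduce the outer two equivalences to homeomorphisms before composing.

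The substantive step is Lemma~\ref{ble}; the rest of the argument is purely bookkeeping. Assuming that lemma is available and stated in the form above, there is no essential obstacle: the finite cardinality of the collection of peripheral subgroups guarantees that we only need to match finitely many ``types'' of pieces, and the uniqueness statement in Theorem~\ref{main1}(3), together with the explicit tree-product construction alluded to in the paper, is what makes Lemma~\ref{ble} plausible (one builds the equivalence between universal trees piece-by-piece, using the piece-wise equivalence on each piece in the tree-product construction).
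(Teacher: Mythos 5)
Your argument is exactly the paper's: the authors dispose of Corollary~\ref{cor2} in one line ("Corollaries \ref{cor1} and \ref{cor2} follow from the theorem and Lemma \ref{ble} immediately"), and your fleshed-out version --- two cones of $G$, two universal trees whose collections of pieces are matched via the unicone hypothesis on the finitely many $H_j$ (finiteness giving the uniformity required in Lemma~\ref{ble}), then composition of the three equivalences --- is the intended reading. The only caveat is that Lemma~\ref{ble} as stated covers only the bi-Lipschitz case, so the homeo-unicone half needs an (unstated but analogous) homeomorphism version of that lemma; this gap is present in the paper's own one-line proof and is not something you introduced.
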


\begin{thm}\label{main3} [Assuming CH is true] Let $\free $ be an asymptotic cone of a geodesic metric space. Suppose that $\free $ is homogeneous and has cut points. Then $\free $ is isometric to the universal $\qq $-tree, where $\qq$ consists of representatives of isometry classes of maximal connected subspaces of $\free$ without cut points.
\end{thm}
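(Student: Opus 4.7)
The plan is to realize $\free$ as a universal $\qq$-tree and invoke the uniqueness part of Theorem \ref{main1}. Three things are needed: (i) $\free$ is a $\qq$-tree, (ii) elements of $\qq$ are homogeneous complete geodesic metric spaces, and (iii) $\free$ is universal.

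For (i), since $\free$ is geodesic with cut points, Lemma 2.31 of \cite{DS} endows $\free$ with its canonical tree-graded structure whose pieces are exactly the maximal connected subsets without cut points, so tautologically $\free$ is a $\qq$-tree. Each piece $P$ is closed in the complete space $\free$ (asymptotic cones are complete), hence complete; and any geodesic in $\free$ between two points of $P$ remains in $P$ by standard tree-graded properties, making $P$ geodesic.

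For (ii), I must show each piece is homogeneous. The defining property of a piece is intrinsic, so every isometry of $\free$ permutes pieces. Given $p,q$ in a piece $P$ and an isometry $\phi$ of $\free$ with $\phi(p)=q$, the image $\phi(P)$ is a piece through $q$. I would argue that $\phi$ can be modified to stabilize $P$: in a homogeneous space containing a cut point every point is a cut point, so $q$ lies on many pieces, and a suitable ``local rearrangement'' isometry fixing $q$ can send $\phi(P)$ back to $P$. Such arguments are in the spirit of the homogeneity results of \cite{DS}.

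The crucial step is (iii). Under CH, every asymptotic cone formed with a non-principal ultrafilter on $\N$ is $\aleph_1$-saturated as a metric structure and has cardinality $\aleph_1 = 2^{\aleph_0}$; this underlies the arguments of \cite{KSTT}. Fix $s \in \free$ and a type $\tau$ of component of $\free\setminus\{s\}$ that is realised. I would first use homogeneity of $\free$ and the tree-graded structure to exhibit, for each $n$, at least $n$ pairwise distinct components of type $\tau$: starting from one component $C_0$ and using isometries of $\free$ that permute the many pieces through $s$, one builds disjoint copies of $C_0$, with no finite bound on the number produced. Since the existence of $n$ pairwise distinct components of type $\tau$ of $\free\setminus\{s\}$ is a first-order condition in the metric language, $\aleph_1$-saturation under CH upgrades this to the existence of continuum many such components. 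Thus $\free$ is universal, and Theorem \ref{main1}(3) yields the desired isometry.

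The main obstacle is step (iii). One must verify that ``type'' as defined in Section 5 corresponds to a first-order (or at least $\aleph_1$-saturation-friendly) condition in the metric language, and one must engineer enough distinct components of a given type using only global homogeneity of $\free$; in particular, the isometries used to multiply components must preserve $\tau$, which is automatic if $\tau$ is an intrinsic invariant but needs checking against the actual definition. The homogeneity of pieces, step (ii), is a secondary but genuine technical issue; both should be addressable by combining the tree-graded techniques of \cite{DS} with the saturation machinery underlying \cite{KSTT}.
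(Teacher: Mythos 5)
Your steps (i) and (ii) are fine in outline (the paper in fact never spells out homogeneity of the pieces either; what it actually proves in Section 7 is that $\free$ \emph{is} a universal $\qq$-tree, and uniqueness is then supplied by Theorem \ref{univ}). The genuine gap is in step (iii), in two places. First, universality requires continuum many limit components of \emph{every} abstract $\qq$-type $\theta\in\Theta$, not just of the types that happen to be realised; you never show that an arbitrary type is realised at all. The paper does this by an explicit construction (transplanted from the proof of Theorem \ref{rhg}): it builds a path of prescribed type by concatenating geodesic segments inside pieces with segments inside transversal trees, and verifies the concatenation is geodesic using Lemma \ref{DS}(c). Nothing in your argument produces components of a type that is not already present.

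Second, your saturation step does not work as stated. ``There exist $n$ pairwise distinct connected components of $\free\setminus\{s\}$ of type $\tau$'' is not a first-order condition in the KSTT language: the only primitives are the distance predicates $R_r$, and neither path-components of a punctured space nor the tree-graded pieces entering the definition of type are first-order definable from these. So $\aleph_1$-saturation cannot be invoked to upgrade ``arbitrarily many'' to ``continuum many''; and your mechanism for getting even arbitrarily many components (``isometries permuting the many pieces through $s$'') presupposes there are many pieces through $s$, which is exactly condition (c) you are trying to prove. The paper uses CH in an entirely different way: by Lemma \ref{ch}, CH gives $\free\cong\lim^\omega\free$ (via saturation of ultrapowers, elementary equivalence, and the isomorphism theorem for saturated models of equal cardinality), and by Lemma \ref{dms} the ultralimit $\lim^\omega\free$ has a \emph{non-trivial transversal tree} $T_o$ at every point. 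Continuum many choices of sequences $(a_i)$ in $T_o$ with $\lim^\omega a_i=x$ then yield continuum many distinct translates of a given piece (resp.\ component) through $x$, because two such translates based at inequivalent sequences are shown to be distinct. You should replace your saturation argument by this route; without it the proof does not close.
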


Thus, in particular, modulo CH, the asymptotic cones of the mapping class groups are completely determined by their pieces, that is maximal connected subsets without cut points. These have been described in \cite{BKMM} and \cite{BDS}. But we still do not know if the pieces depend on the sequence of scaling constants or the ultrafilter. Hence the question of whether every mapping class group of a punctured surface has unique up to homeomorphism asymptotic cone (see \cite[Question 7.7]{Beh}) remains open.

The paper is organized as follows. In Section 2 we collect some necessary definitions and results about tree graded spaces and relatively hyperbolic groups. Sections 3 and 4 contain the definition,  explicit construction, and main geometric properties of tree products. In Section 5 we introduce universal $\qq$-trees and prove Theorem \ref{main1}. Theorems \ref{main2} and \ref{main3} are proved in Sections 6 and 7, respectively.

{\bf Acknowledgement.} The authors are grateful to Cornelia Dru\c tu for helpful discussions. In particular, Sections \ref{sec2}, \ref{sec3} were written in collaboration with her in 2004 during the preparation of \cite{DS} but were not included in \cite{DS} because we were not able to prove the uniqueness of universal $\qq$-trees then.  We are also grateful to Yves de Cornulier for his comments.

When this paper was completed, it was brought to our attention by Cornelia Dru\c tu that a preprint by Alessandro Sisto \cite{Si2} that appeared at the same time contains results similar to the results proved in  the present paper, including Corollary \ref{cor1}. The preprint \cite{Si2} is based on the thesis \cite{Si1}.

\section{Preliminaries}


\noindent{\bf Notation.} All generating sets of groups are supposed
to be symmetric, i.e., to be closed under taking inverse elements.
Given a group $G$ generated by a subset $X\subseteq G$, we denote by
$\cgx $ the Cayley graph of $G$ with respect to $X$ and by $|g|_X$
the word length of an element $g\in G$. If $p$ is a (combinatorial)
path in $\cgx$, $\phi (p)$ denotes its label, $\Lab(p)$ denotes its
length, $p_-$ and $p_+$ denote its starting and ending vertex. The
notation $p^{-1}$ will be used for the path in $\cgx$ obtained by
traversing $p$ backwards. By saying that $o=p_1\dots p_k$ is a cycle
in $\cgx$ we will mean that $o$ is obtained as a consecutive
concatenation of paths $p_1,\dots p_k$ such that
$(p_{i+1})_-=(p_i)_+$ for $i=1,\dots,k-1$ and $(p_k)_+=(p_1)_-$. For
a word $W$ written in the alphabet $X^{\pm 1}$, $\|W\|$ will denote
its length. For two words $U$ and $V$ we shall write $U \equiv V$ to
denote the letter-by-letter equality between them.

\medskip

\noindent{\bf Tree-graded spaces.}
We collect here all the necessary definitions and basic properties
of tree-graded spaces from \cite{DS} needed in this paper.

Recall that a metric space $\mathbb X$ is called {\it geodesic} (or a {\it length space}) if every two points of $\mathbb X$ can be joined by a geodesic. A point $x\in \mathbb X$ is a {\it cut point} if $\mathbb X\setminus \{ x\} $ is disconnected. Note that for a geodesic space, connected components of $\mathbb X\setminus \{ x\} $  are path components.

\begin{def}\label{tgspace}
Let $\free$ be a complete geodesic metric space and let $\pp$ be a
collection of closed geodesic non-empty subsets (called
{\it{pieces}}). Suppose that the following two properties are
satisfied:

\begin{enumerate}

\item[($T_1$)] Every two different pieces have at most one common
point.

\item[($T_2$)] Every non--trivial simple geodesic triangle (a
simple loop composed of three geodesics) in $\free$ is contained
in one piece.
\end{enumerate}

Then we say that the space $\free$ is {\em tree-graded with
respect to }$\pp$.
\end{def}

\noindent  We allow $\mathcal
P$ to be empty. Clearly $\free$ is tree--graded with respect to
the empty collections of pieces only if $\free$ is a tree.

Recall that a (topological) {\it arc} in a metric space $S$ is any continuous injective map from a segment of $\mathbb R$ to $S$. The following was proved in \cite[Corollary 2.10, Proposition 2.17]{DS}.

\begin{lem}\label{DS}
For any tree-graded space $\free$ the following hold.

\begin{enumerate}
\item[(a)] Every
non-empty intersection of a topological arc in $\free$ and a
piece is a point or a sub-arc.

\item[(b)] Every simple loop in $\free$ is contained
in one piece.

\item[(c)] For every arc $c:[0,d]\to T$, where $c(0)\ne c(d)$, and any $t\in
[0,d]$, let $c[t-a,t+b]$ be a maximal sub-arc of $c$
containing $c(t)$ and contained in one piece. Then every other
topological arc with the same endpoints as $c$ must contain the
points $c(t-a)$ and $c(t+b)$.
\end{enumerate}
\end{lem}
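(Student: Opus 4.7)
I would prove the three parts in the order (b), (a), (c): (b) is the technical heart, and (a), (c) follow from (b) by short ``simple loop'' arguments.

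For (b), the plan is to approximate a simple loop $\gamma$ by geodesic polygons and reduce to $(T_2)$. Choose three distinct points on $\gamma$, splitting it into three sub-arcs, and join consecutive endpoints by geodesics to form a geodesic triangle $\Delta$. If $\Delta$ is simple and non-degenerate, $(T_2)$ places it in a single piece $P$. Refining the subdivision by inserting further points on $\gamma$ produces more geodesic triangles, each of whose simple non-degenerate cases is handled by $(T_2)$; the pieces they lie in must all coincide with $P$ by the ``propagation'' principle from $(T_1)$: two pieces sharing more than one point must be equal, and adjacent sub-triangles share a common geodesic edge. Taking a limit of finer approximations, and using that $P$ is closed, yields $\gamma\subset P$.

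For (a), let $A$ be a topological arc meeting a piece $P$ in two distinct points $x,y$, let $A_{xy}$ be the sub-arc of $A$ between them, and let $\sigma\subset P$ be a geodesic from $x$ to $y$. Trimming $A_{xy}\cup\sigma$ at the first and last intersection of $A_{xy}$ with $\sigma$ yields a simple loop; by (b) it lies in one piece, and since it contains a non-trivial sub-arc of $\sigma\subset P$, $(T_1)$ forces that piece to be $P$. Hence $A_{xy}\subset P$, so $A\cap P$ is connected in the parametrization of $A$, i.e., a point or a sub-arc.

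For (c), suppose for contradiction that $c(t+b)\notin c'$ (the case of $c(t-a)$ is symmetric). Following $c$ forward from $c(t+b)$ to $c(d)$, let $q=c(s_2)$ be the first point meeting $c'$; following $c$ backward to $c(0)$, let $p=c(s_1)$. Then $s_1<t+b<s_2$, and the sub-arcs of $c$ and $c'$ between $p$ and $q$ together form a simple loop through $c(t+b)$. By (b) this loop lies in some piece $P'$, so $c([s_1,s_2])\subset P'$. Since $c([s_1,s_2])$ meets $P$ in the non-degenerate sub-arc $c([\max(s_1,t-a),t+b])$, $(T_1)$ gives $P=P'$, so $c([s_1,s_2])\subset P$, strictly extending $c[t-a,t+b]$ and contradicting maximality.

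The main obstacle is making (b) rigorous: ensuring that at each refinement the relevant geodesic triangles are simple and non-degenerate so $(T_2)$ applies, and tracking that the pieces produced at successive stages of refinement all coincide. The degenerate cases (where a geodesic triangle collapses, or where an extremal sub-arc $c[t-a,t+b]$ is a single point) require separate treatment but are handled by the same $(T_1)$-based propagation.
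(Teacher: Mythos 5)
The paper does not actually prove this lemma: it is imported verbatim from \cite{DS} (Corollary 2.10 and Proposition 2.17 there), so there is no in-paper argument to compare yours with; you are effectively reproving a foundational result of Dru\c tu--Sapir. Your reductions of (a) and (c) to (b) are essentially sound and close to how one would do it. Two small repairs: in (a), ``trimming at the first and last intersection of $A_{xy}$ with $\sigma$'' does not produce a simple loop, since $A_{xy}$ and $\sigma$ share the endpoints $x,y$ and may re-cross in between (possibly in a complicated closed set); the correct move is to take a maximal open sub-arc of $A_{xy}$ disjoint from the \emph{piece} $P$ (it exists because $P$ is closed), join its endpoints by a geodesic inside $P$, and apply (b) and $(T_1)$ to that simple loop. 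Your argument for (c), including the degenerate case $a=b=0$, goes through once (b) is available.

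The genuine gap is in (b), and it is exactly the step you defer. Given a simple loop $\gamma$ and three points on it, the inscribed geodesic triangle need not be simple, and the standard reduction (as in Lemma \ref{ttg}) may collapse it to a point, in which case $(T_2)$ gives no information; moreover, refining the subdivision produces inscribed \emph{polygons}, not triangles, and triangulating them introduces diagonal geodesics whose intersections with the polygon are uncontrolled. Ruling out the possibility that every inscribed geodesic polygon of a simple loop degenerates, extracting simple geodesic sub-polygons from non-simple ones by surgery, and showing the resulting pieces all coincide is the actual content of \cite[Proposition 2.9 and its corollaries]{DS}, where it is done by induction on the number of sides with careful case analysis; asserting that ``the degenerate cases are handled by the same $(T_1)$-based propagation'' does not substitute for that argument. (By contrast, your final ``limit'' step is a non-issue: once every sufficiently fine inscribed polygon through a given point $\gamma(t)$ is known to be simple and to lie in the same piece $P$, the point $\gamma(t)$ is a vertex of such a polygon and hence lies in $P$ directly, with no appeal to closedness.) As it stands, the proposal is a correct strategic outline with the central lemma unproved.
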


For every point $x$ in a tree-graded space $\free$ let $T_x$ be the set of points $y\in \free$ such that every geodesic $[x,y]$ intersects every piece in at most one point. By \cite[Lemma 2.14]{DS}, $T_x$ is an $\mathbb R$-tree, it is called the {\em transversal tree of $\free$ at $x$}.

\begin{lem}\label{txy}(\cite[Lemma 2.13]{DS})
Let $x\in \free $ and $y\in T_x$. Then $T_x=T_y$.
\end{lem}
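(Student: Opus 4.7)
The plan is to prove both inclusions $T_x\subseteq T_y$ and $T_y\subseteq T_x$. First observe that since the geodesic $[y,x]$ and the geodesic $[x,y]$ have the same image, and hence the same intersection with each piece, the conditions $y\in T_x$ and $x\in T_y$ are equivalent; thus, once $T_x\subseteq T_y$ is established under the hypothesis $y\in T_x$, the reverse inclusion follows by applying the same argument with the roles of $x$ and $y$ swapped.

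To prove $T_x\subseteq T_y$, fix $z\in T_x$ and a geodesic $\gamma=[y,z]$, and suppose for contradiction that $\gamma$ meets some piece $P$ in more than one point. By Lemma~\ref{DS}(a), $\gamma\cap P$ is a sub-arc, so one may choose a maximal sub-arc $[a,b]$ of $\gamma$ contained in $P$ with $a\ne b$. Concatenating chosen geodesics $[y,x]$ and $[x,z]$ produces a path from $y$ to $z$, and by a standard point-set topology fact this path contains a topological arc $\alpha$ from $y$ to $z$. By Lemma~\ref{DS}(c) applied to $\gamma$ at any interior parameter of $[a,b]$, the arc $\alpha$ must contain both $a$ and $b$. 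Since $y,z\in T_x$, each of $[y,x]$ and $[x,z]$ meets $P$ in at most one point, so $\alpha\cap P$ has at most two elements, forcing $\alpha\cap P=\{a,b\}$. Moreover, $a$ and $b$ cannot both lie on the same one of the two geodesics (else it would meet $P$ in two points), so, relabeling if necessary, $a\in[y,x]$ and $b\in[x,z]$.

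The main step---which I expect to be the trickiest point---is to derive a contradiction from this configuration by producing a simple loop. Let $\alpha_1$ be the sub-arc of $\alpha$ from $a$ to $b$. Since $[a,b]\subseteq P$ and $\alpha_1\cap P\subseteq\alpha\cap P=\{a,b\}$, the two arcs $[a,b]$ and $\alpha_1$ meet only at their common endpoints, so the concatenation $[a,b]\cdot\alpha_1^{-1}$ is a simple loop. By Lemma~\ref{DS}(b), this loop lies in a single piece, and since it contains the non-degenerate sub-arc $[a,b]\subseteq P$, that piece must be $P$. Consequently $\alpha_1\subseteq P$, which contradicts $\alpha_1\cap P=\{a,b\}$ because an injective continuous image of a non-degenerate interval has more than two points. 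This contradiction shows $z\in T_y$ and completes the proof.
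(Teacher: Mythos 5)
Your proof is correct. The paper does not prove this lemma itself --- it is imported from \cite[Lemma 2.13]{DS} --- so there is no in-paper proof to compare against; your argument is a sound, self-contained derivation from the properties collected in Lemma \ref{DS}: the symmetry reduction ($y\in T_x\Leftrightarrow x\in T_y$ since reversing a geodesic preserves its image), the extraction of an arc $\alpha\subseteq [y,x]\cup[x,z]$, the use of part (c) to force $\alpha$ through the endpoints $a,b$ of $\gamma\cap P$, and the final simple-loop contradiction via parts (a), (b) and property $(T_1)$ all check out, and this is essentially the standard argument.
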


\begin{lem}\label{ctree}(\cite[Lemmas 2.19, 2.14]{DS}
For every $x\in \free$, $T_x$ is a closed subset of $\free$. Every simple path connecting two points in $T_x$ inside $\free$ is contained in $T_x$.
\end{lem}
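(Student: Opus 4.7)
The plan is to prove the two assertions separately, using the closest-point projection onto pieces (a standard tool from \cite[Section 2]{DS}) for closedness, and the rigidity of arcs provided by Lemma \ref{DS}(c) for both parts.

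For closedness, I will show that the complement $\free\setminus T_x$ is open. Suppose $y\notin T_x$: some geodesic $\gamma$ from $x$ to $y$ meets a piece $P$ in a non-degenerate subarc $[a,b]$. I will invoke the standard facts from \cite[Section 2]{DS} that the closest-point projection $\pi_P:\free\to P$ is well-defined and continuous, and that for any geodesic $[u,v]$ crossing $P$ non-trivially the entry and exit points are $\pi_P(u)$ and $\pi_P(v)$. Thus $\pi_P(x)=a$ and $\pi_P(y)=b$, and $a\neq b$. By continuity, every $y'$ sufficiently close to $y$ satisfies $\pi_P(y')\neq\pi_P(x)$, so any geodesic from $x$ to $y'$ crosses $P$ from $\pi_P(x)$ to $\pi_P(y')$ in a non-degenerate subarc, forcing $y'\notin T_x$. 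This exhibits an open neighborhood of $y$ in $\free\setminus T_x$, so $T_x$ is closed.

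For the second assertion, let $u,v\in T_x$ and let $\alpha$ be a simple arc from $u$ to $v$. By Lemma \ref{txy} we have $T_x=T_u$, so $v\in T_u$. Choosing any geodesic $\gamma$ from $u$ to $v$, the hypothesis $v\in T_u$ gives that $\gamma$ meets every piece in at most one point. The key observation is then: Lemma \ref{DS}(c) forces the same property on $\alpha$, because if $\alpha$ had a non-degenerate intersection with some piece $P$, the lemma would require $\gamma$ (an arc with the same endpoints) to contain the two distinct endpoints of that sub-arc, contradicting $|\gamma\cap P|\le 1$. Now for any $w\in\alpha$, the sub-arc $\alpha_{u,w}$ inherits this property, and one more application of Lemma \ref{DS}(c), comparing $\alpha_{u,w}$ with an arbitrary geodesic $[u,w]$, shows that every such geodesic meets each piece in at most one point. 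Hence $w\in T_u=T_x$, and $\alpha\subseteq T_x$.

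The main technical point that needs careful handling is the equivalence \emph{some arc from $u$ to $y$ avoids non-degenerate piece-crossings iff every such arc does}; this is a direct but essential consequence of Lemma \ref{DS}(c) and is what lets us pass freely between the given geodesics appearing in the definition of $T_u$ and the non-geodesic arc $\alpha$. A minor subtlety in the closedness argument is the boundary case $b=y\in P$, but the projection approach treats it uniformly: $\pi_P(y)=y\neq a=\pi_P(x)$ still yields the required separation by continuity.
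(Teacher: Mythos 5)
Your proof is correct. Note that the paper itself gives no argument for this lemma --- it is imported verbatim from \cite[Lemmas 2.14, 2.19]{DS} --- so there is nothing internal to compare against; but your reconstruction is sound and uses exactly the tools one would expect. The closedness argument correctly reduces to the continuity (indeed $1$-Lipschitz property) of the nearest-point projection onto a piece and the fact that a geodesic crossing a piece enters and exits at the projections of its endpoints; these are genuine statements of \cite[Section 2]{DS}, not restated in this paper, so you are relying on the cited source rather than on anything proved here, which is legitimate in context. The second half is a clean double application of Lemma \ref{DS}(c): first to transfer ``meets every piece in at most one point'' from one geodesic $[u,v]$ to the arbitrary simple arc $\alpha$, then back from the sub-arc $\alpha_{u,w}$ to every geodesic $[u,w]$, combined with Lemma \ref{txy} to replace the basepoint $x$ by $u$. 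The only implicit hypotheses are that the endpoints of the simple path are distinct (otherwise a non-trivial simple loop inside a piece would be a counterexample, and Lemma \ref{DS}(c) requires distinct endpoints anyway) and the boundary case $y\in P$ in the closedness argument, which you address explicitly. No gaps.
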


\begin{lem} \label{treesrem}(\cite[Remark 2.27]{DS}) Let $\free$ be a metric space that is tree-graded with respect to a collection of pieces $\pp$. Let
$\{T_i\mid i\in I\}$ be the collection of all transversal trees $T_x$ in $\free$. Then
the set $\pp'=\pp\cup \{ T_i \mid i\in I \}$ also satisfies
properties $(T_1)$ and $(T_2)$, in particular $\free$ is tree-graded with respect to the collection of pieces $\pp'$. In that tree-graded space all transversal trees are trivial.
\end{lem}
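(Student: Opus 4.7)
The plan is to verify property $(T_1)$ by a case analysis on the types of pieces of $\pp'$, verify $(T_2)$ by observing that it is already guaranteed by the original pieces in $\pp$, and then show that the new transversal trees collapse.

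\textbf{Verifying $(T_1)$.} There are three cases. Two pieces $P_1,P_2\in\pp$ already satisfy $(T_1)$ by hypothesis. Two distinct transversal trees $T_x$ and $T_y$ must be disjoint: if $z\in T_x\cap T_y$, then Lemma \ref{txy} gives $T_x=T_z=T_y$. The key case is an intersection $P\cap T_x$ with $P\in\pp$. Suppose for contradiction that there exist distinct points $p,q\in P\cap T_x$. Since $p,q$ both lie in $T_x$, Lemma \ref{ctree} implies that any simple path between them in $\free$ lies in $T_x$; in particular the geodesic $[p,q]$ inside $P$ (which is a simple path in $\free$) is contained in $T_x$. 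Pick any interior point $z$ of this arc. Then $z\in T_x$, so $T_z=T_x$ by Lemma \ref{txy}, and in particular $q\in T_z$. But the sub-arc $[z,q]$ of the geodesic $[p,q]\subset P$ is a geodesic from $z$ to $q$ in $\free$ entirely contained in the piece $P$; its intersection with $P$ is the whole segment, which has more than one point. This contradicts the definition of $T_z$.

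\textbf{Verifying $(T_2)$.} Every non-trivial simple geodesic triangle in $\free$ is in particular a simple loop, so by Lemma \ref{DS}(b) it is contained in some piece $P\in\pp\subset\pp'$. Together with $(T_1)$ this shows that $\free$ is tree-graded with respect to $\pp'$.

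\textbf{Triviality of transversal trees for $\pp'$.} Let $T'_x$ denote the transversal tree at $x$ in the new tree-graded structure. Let $y\neq x$ and let $[x,y]$ be any geodesic. Either $y\in T_x$, in which case $[x,y]\subset T_x$ by Lemma \ref{ctree}, so $[x,y]\cap T_x$ has more than one point and $T_x\in\pp'$ witnesses $y\notin T'_x$; or $y\notin T_x$, in which case by definition of $T_x$ some geodesic $[x,y]$ meets some $P\in\pp\subset\pp'$ in more than one point, so again $y\notin T'_x$.

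\textbf{Main obstacle.} The only non-formal step is the case $P\cap T_x$ in $(T_1)$: this is where one must combine the fact that $T_x$ is closed under simple paths (Lemma \ref{ctree}) with the translation invariance of transversal trees along their own points (Lemma \ref{txy}). The argument above uses an interior point of the common sub-arc to force the transversal tree based there to contain a nontrivial geodesic inside a piece, which is the desired contradiction. Everything else is a straightforward bookkeeping check.
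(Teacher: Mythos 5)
Your proof is correct. The paper itself gives no argument for this lemma --- it is quoted verbatim from \cite[Remark 2.27]{DS} --- so there is nothing internal to compare against; your reconstruction from Lemmas \ref{txy}, \ref{ctree} and \ref{DS} is a valid way to fill it in. Two small remarks: in the case $P\cap T_x$ the detour through an interior point $z$ is unnecessary (since $p\in T_x$ gives $T_p=T_x\ni q$, the geodesic $[p,q]\subset P$ already violates the definition of $q\in T_p$ directly), and for $\free$ to be tree-graded with respect to $\pp'$ one should also note that each $T_i$ qualifies as a piece in the sense of Definition \ref{tgspace}, i.e.\ is a closed, geodesic, non-empty subset --- which is exactly what Lemma \ref{ctree} and the fact that $T_x$ is an $\R$-tree containing $x$ provide.
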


\begin{lem}[{\bf Triangles in tree-graded spaces}] \label{ttg}
Let $\Delta =pqr$ be a geodesic triangle in a tree-graded space $\free$. Then the sides $p, q, r$ can be decomposed as $p=p_1p_2p_3$, $q=q_1q_2q_3$, and $r=r_1r_2r_3$ (some of $p_i,q_i,r_i$ may be trivial), where $(p_1)_+=(r_3)_-$, $(p_3)_-=(q_1)_+$, $(q_3)_-=(r_1)_+$, and the cycle $p_2q_2r_2$ either is a point or belongs to a single piece.
\end{lem}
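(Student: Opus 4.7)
Label the vertices of the triangle so that $p\colon A\to B$, $q\colon B\to C$, and $r\colon C\to A$; then $X:=(p_1)_+=(r_3)_-$, $Y:=(p_3)_-=(q_1)_+$, $Z:=(q_3)_-=(r_1)_+$ are the three points we need to locate. The plan is to produce $X,Y,Z$ as the ``branching points'' of the triangle coming from the tree-graded structure, and then to argue that the inner loop $p_2q_2r_2$ either collapses to a point or is a simple loop, at which moment property $(T_2)$ from Definition~\ref{tgspace} finishes the proof.

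The main tool is Lemma \ref{DS}(c). Consider the two geodesic arcs $p$ and $r^{-1}$ emanating from $A$. For each point on $p$, part (c) of Lemma \ref{DS} gives a maximal sub-arc through that point that lies in a single piece, and forces every other arc from $A$ to $B$ to pass through the endpoints of that sub-arc. The first step is to show that $p$ and $r^{-1}$ share an initial portion: either a common initial sub-arc, or a common maximal initial sub-arc contained in a single piece. I would formalize this by taking the supremum of those $t$ for which the initial segment $p|_{[0,t]}$ is either equal to the corresponding initial segment of $r^{-1}$ or lies, together with that segment, inside a common piece. Define $X$ to be this supremum point; this then yields $p_1\subseteq p$ and $r_3^{-1}\subseteq r^{-1}$ ending at $X$. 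The supremum is attained because pieces are closed. Define $Y$ and $Z$ symmetrically, starting from $B$ (comparing $p^{-1}$ and $q$) and from $C$ (comparing $q^{-1}$ and $r$) respectively. This produces the required decompositions $p=p_1p_2p_3$, $q=q_1q_2q_3$, $r=r_1r_2r_3$ with the stated endpoint identifications.

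It remains to analyze the inner loop $\ell=p_2q_2r_2$ based at $X$. If all three of $p_2,q_2,r_2$ are trivial, $\ell$ is a single point, and we are done. Otherwise, I would show $\ell$ is a simple loop, and then invoke $(T_2)$ to conclude $\ell$ lies in a single piece. Suppose, for contradiction, that $\ell$ has a non-trivial self-intersection. Two cases arise: either two of $p_2,q_2,r_2$ share a sub-arc, or they meet only at isolated interior points. In either case, apply Lemma \ref{DS}(a) to the arcs $p_2$ and (say) $q_2$: their non-empty intersection inside the arc formed by $p_2q_2$ (modified if necessary to be an arc) is either a point or a sub-arc, and the same analysis at the vertex $Y$ says this common segment must actually be shared initially with the corresponding continuation along $p^{-1}$ from $Y$. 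But that contradicts the maximality built into the definition of $Y$ as the last branching point of $p^{-1}$ and $q$ at $B$. A symmetric argument rules out the other overlaps. Thus $\ell$ must be simple, and $(T_2)$ finishes the proof.

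The main obstacle will be the last step: handling the possible self-intersections of the inner loop and reducing them to a contradiction with the maximality of $X,Y,Z$. The subtlety is that a concatenation of geodesics need not be an arc, so one has to pass from paths to arcs carefully (removing backtracking subloops), and then apply Lemma \ref{DS}(c) to compare the resulting arcs with the original geodesics. A secondary care point is boundary behavior: one must check that the supremum defining $X$ is realized (so that $p_1,r_3$ end at an honest point), which follows from the closedness of pieces and the completeness of $\free$.
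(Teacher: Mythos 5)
Your overall strategy (locate three branching points, argue the inner loop is simple, invoke $(T_2)$) is the same as the paper's, but the way you define the branching points is where the argument breaks. You take $X$ to be the supremum of those $t$ for which the initial segments of $p$ and $r^{-1}$ are equal \emph{or lie together in a single common piece}. In a tree-graded space two geodesics issuing from the same point can agree, detour through a piece along different sub-geodesics, \emph{reunite at the exit point of that piece}, travel together again, detour through a second piece, reunite again, and so on (possibly infinitely often) before separating for good. Your supremum stops at the first such detour, so your $X$ is in general strictly earlier than the true branching point. With that choice the cycle $p_2q_2r_2$ contains an entire piece-detour (a non-simple sub-loop) plus everything beyond it, so it is neither a point, nor simple, nor contained in one piece; and your concluding reductio fails because the shared segment of $p_2$ and $r_2$ beyond the first detour does \emph{not} contradict the maximality of your $X$ — it is perfectly consistent with it. So the final step cannot be repaired without changing the definition of $X$, $Y$, $Z$.

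The fix is the paper's (simpler) choice: let $X$ be the \emph{last} common point of $p$ and $r^{-1}$, i.e.\ the common point at maximal distance from the vertex (this set of common points is closed and bounded, so the maximum is attained — no appeal to closedness of pieces or completeness is needed). Note that a common point of two geodesics from the same vertex sits at the same parameter on both, so ``last'' is unambiguous. With $X$, $Y$, $Z$ defined this way, simplicity of the middle triangle is immediate from maximality: an interior intersection of $[X,Y]\subseteq p$ with $[Y,Z]\subseteq q$ would be a common point of $p^{-1}$ and $q$ strictly farther from their common vertex than $Y$. Then $(T_2)$ applies when the middle triangle is nondegenerate; when exactly one of $p_2,q_2,r_2$ is trivial you get a simple bigon and should quote Lemma~\ref{DS}(b) rather than $(T_2)$ — a case both you and, strictly speaking, the paper's one-line proof pass over.
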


\proof Let $A$ be the last common point of $p$ and $r\iv$, $B$ be the last common point of $p\iv$ and $q$, $C$ be the last common point of $q\iv$ and $r$. Then either $A=B=C$ or the union of  $[A,B]\subseteq p$,  $[B,C]\subseteq  q$ and  $[C,A] \subseteq r$ is a simple geodesic triangle in $\free$ and by the definition of tree-graded spaces it is contained in one piece of $\free$. Then the points $A,B$ (resp. $B,C$ and $C,A$) gives the required decomposition $p = p_1,p_2, p_3$ (resp. $q=q_1q_2q_3$, $r=r_1r_2r_3$).
\endproof

\medskip

\noindent{\bf Asymptotic cones.}
Recall that a non-principal ultrafilter $\omega$ is a finitely additive measure
defined on all subsets $S$ of ${\mathbb N}$, such that $\omega(S)
\in \{0,1\}$, $\omega (\mathbb N)=1$, and $\omega(S)=0$ if $S$ is
a finite subset. For a bounded sequence of numbers $x_n$, $n\in
\mathbb N$, the limit $\lio x_n$ with respect to $\omega$ is the unique
real number $a$ such that $\omega(\{i\in {\mathbb N}:
|x_i-a|<\epsilon\})=1$ for every $\epsilon>0$.

Let $(X,\dist)$ be a metric space. Fix an arbitrary
sequence $e=(e_n)$ of points $e_n\in X$ and a
{\it scaling sequence} of positive real numbers $d=(d_n)$ with $\lim^\omega(d_n)=\infty$ (that is for every $K>0$ the set $\{n: d_n>K\}$ is in $\omega$). Let $X_i=\left(X, \frac1{d_i}\dist \right)$ be the scaled copy of $X_i$. Consider the subset $\mathcal B$ of the ultraproduct $\Pi ^\omega  X_i$ consisting of all $x=(x_n)^\omega \in \prod ^\omega  X_i$ such that
$\lio \dist (x_n,e_n)/d_n  < \infty $. Two sequences
$(x_n)^\omega$ and $(y_n)^\omega $ from this set ${\mathcal B}$ are said to be {\em
equivalent} if $\lio\dist(x_n,y_n) =0$. The {\em asymptotic cone} $\Con^\omega(X,e, d)$ is the quotient of $\mathcal B$ modulo
this equivalence relation. The metric is defined by
$$
\dist ([(x_n)^\omega ], [(y_n)^\omega] )=\lio (\dist (x_n, y_n)/d_n) .
$$
It is easy to verify that $\dist $ is well-defined (i.e., it is
independent of the choice of representatives of the equivalence
classes $[(x_n)^\omega ]$ and $[(y_n)^\omega ]$) and indeed
satisfies all axioms of a metric.

Note that if $e$ is a constant sequence, the asymptotic cone is
independent of $e$ and we write $\Con^\omega(X,d)$ instead of
$\Con^\omega(X,e,d)$. If $X$ is homogeneous and $e$ is an arbitrary
sequence, then $\Con^\omega(X, e,d)$ is isometric to
$\Con^\omega(X,d)$.

An {\em asymptotic cone} of a finitely generated group $G$ with a
word metric is the asymptotic cone of its Cayley graph (considered
as the discrete space of vertices with the word metric). Asymptotic
cones corresponding to two different finite generating sets of $G$
(and the same ultrafilters and scaling constants) are bi-Lipschitz
equivalent. The asymptotic cone $\Con^\omega(G,d)$ of a group $G$ is
a homogeneous geodesic metric space.

\medskip

\noindent{\bf Relatively hyperbolic groups.}
Let us recall one of the definitions of finitely generated relatively hyperbolic groups (see \cite[Appendix]{DS}).

\begin{defn} \label{rhd} A group $G$ generated by a finite set $X$ is called {\it hyperbolic relative to a collection of proper
subgroups} $\Hl $ if for every non-principal ultrafilter $\omega $ and every scaling sequence $d=(d_n)$ the following conditions hold.
\begin{enumerate}
\item[(a)]
Let $\mathcal L$ be the collection of all limits $\lio{g_jH_i}$
in $\CG $, where $|g_j|=O(d_j)$. Then two limits $\lio
g_jH_i,\lio g_j^\prime H_k\in \mathcal L$ coincide in $\CG $ if
and only if $i=k$ and $g_jH_i=g_j^\prime H_i$ \oas.

\item[(b)] The asymptotic cone $\Con^\omega(G,(d_n))$ is tree-graded with respect to the set of all (distinct) elements of $\mathcal L$.
\end{enumerate}
\end{defn}

An element $g\in G$ is called {\it parabolic} if it is conjugate to an element of one of the
subgroups $H_i$. An element is said to be {\it hyperbolic} if it is not
parabolic and has infinite order. Recall also that a group is {\it elementary} if it contains a cyclic
subgroup of finite index. The first part of the next lemma is well-known in the context of convergence groups \cite{Tuk,Yam}. For the second part we refer to \cite{Osi06b}.

\begin{lem}\label{Eg}
Let $g$ be a hyperbolic element of $G$. Then the following conditions hold.
\begin{enumerate}
\item[(a)] There is a unique maximal elementary
subgroup $E_G(g)\le G$ containing $g$.

\item[(b)] The group $G$ is hyperbolic relative to the collection
$\{H_1, \ldots , H_m, E_G(g)\} $.
\end{enumerate}
\end{lem}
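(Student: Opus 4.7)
The plan for part (a) is to realize $E_G(g)$ as the setwise stabilizer of a fixed-point pair on the Bowditch boundary $\partial(G,\hh)$. Since $g$ is hyperbolic in the sense of Definition \ref{rhd}, the convergence action of $G$ on $\partial(G,\hh)$ makes $g$ loxodromic with exactly two fixed points $g^{\pm}$; I define $E_G(g)=\{h\in G:h\cdot\{g^+,g^-\}=\{g^+,g^-\}\}$. Uniqueness of $E_G(g)$ as a maximal elementary subgroup containing $g$ is automatic: any elementary subgroup $E\ni g$ has $\langle g\rangle$ as a finite-index cyclic subgroup, so every $h\in E$ commutes with some positive power of $g$ and therefore fixes $\{g^+,g^-\}$ setwise, forcing $E\le E_G(g)$. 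That $E_G(g)$ is itself elementary is the classical convergence-group theorem of Tukia--Yaman that the setwise stabilizer of a loxodromic fixed-point pair is virtually cyclic. As a self-contained alternative, I would work inside the coned-off Cayley graph $\G$ (which is hyperbolic), viewing $E_G(g)$ as the commensurator of $\langle g\rangle$; a standard hyperbolic-geometry argument on the quasi-axis of $g$ yields the same conclusion.

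For part (b), the plan is to verify Definition \ref{rhd} directly for the enlarged family $\{H_1,\ldots,H_m,E_G(g)\}$. Fix a non-principal ultrafilter $\omega$ and scaling sequence $d=(d_n)$, and let $\free=\CG$. By assumption $\free$ is tree-graded with pieces $\mathcal{L}_0=\{\lio f_jH_i:|f_j|_X=O(d_j),\,1\le i\le m\}$. I would augment $\mathcal{L}_0$ with all limits $\lio f_jE_G(g)$ with $|f_j|_X=O(d_j)$, each of which is bi-Lipschitz equivalent to $\R$ since $E_G(g)$ is virtually cyclic. The key geometric input is a uniform bound $R$ such that, for all $f\in G$ and all $i$, the intersection $E_G(g)\cap fH_i$ has $X$-diameter at most $R$, and analogously $E_G(g)\cap fE_G(g)f^{-1}$ has bounded $X$-diameter unless $f\in E_G(g)$. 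The first bound implies that each new line-piece $\lio f_jE_G(g)$ sits inside a single transversal tree of the original structure on $\free$ and meets each old peripheral piece in at most a point; the second controls $(T_1)$ among the new pieces themselves. A direct check of $(T_1)$ and $(T_2)$ for the augmented collection then yields Definition \ref{rhd}(b), with Lemma \ref{treesrem} available to reinterpret the new line-pieces inside the affected transversal trees. The coset coincidence condition Definition \ref{rhd}(a) for the enlarged collection reduces to the same intersection bounds.

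The main obstacle is precisely the uniform intersection bound for $E_G(g)\cap fH_i$. The qualitative fact that no positive power of $g$ lies in a conjugate of any $H_i$ is immediate from hyperbolicity of $g$, but upgrading this to a uniform bound in the $X$-metric requires combining Gromov-hyperbolicity of $\G$ with the bounded coset penetration property of $\{H_1,\ldots,H_m\}$ in order to bound how long a quasi-axis of $g$ in $\G$ can remain in a single peripheral coset. Once this quantitative estimate is in hand, everything else in the argument is routine bookkeeping inside the tree-graded asymptotic cone $\free$.
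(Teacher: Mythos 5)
The paper does not actually prove this lemma: part (a) is quoted from the convergence-group literature \cite{Tuk,Yam} and part (b) from \cite{Osi06b}, so there is no in-house argument to match line by line. Your part (a) follows exactly the cited route --- $E_G(g)$ as the setwise stabilizer of the loxodromic fixed pair. One small correction: it is not true that every element $h$ of an elementary subgroup $E\ni g$ commutes with a positive power of $g$ (in the infinite-dihedral case a ``reflection'' conjugates $g^n$ to $g^{-n}$ times torsion); what is true is that $\langle g\rangle$ has finite index in $E$, so $h$ commensurates $\langle g\rangle$ and therefore preserves its two-point limit set, which is all you need. Your part (b) is a genuinely different route from the cited one: \cite{Osi06b} works with the combinatorial definition of relative hyperbolicity (relative presentations and relative isoperimetric functions), whereas you verify the asymptotic-cone Definition \ref{rhd} directly by adjoining the limit lines $\lio f_jE_G(g)$ as new pieces. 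That has the merit of staying inside the framework this paper uses, and your bookkeeping is right: $(T_2)$ for the augmented collection is free because all old pieces are retained, so everything reduces to $(T_1)$ and to condition (a) of Definition \ref{rhd}, both of which follow from uniform bounds on the $X$-diameter of $E_G(g)\cap fH_i$ and of $E_G(g)\cap fE_G(g)f^{-1}$ for $f\notin E_G(g)$ (more precisely, of intersections of bounded neighborhoods of the relevant cosets), plus the observation, via Lemma \ref{treesrem}, that each new line then sits inside a single transversal tree and is therefore a closed geodesic subset.

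The caveat is that those uniform bounds are not routine bookkeeping --- they are the entire content of the cited result. Showing that a quasi-axis of $g$ in $\G$ penetrates each peripheral coset a uniformly bounded amount, and that distinct conjugates of $E_G(g)$ have uniformly bounded coarse intersection, is essentially equivalent to the strong relative quasiconvexity of $E_G(g)$ established in \cite{Osi06b} (or, in BCP language, in \cite{F}). So your outline correctly isolates the crux but does not discharge it; as written, part (b) is a reduction of the lemma to that key quantitative estimate rather than a proof of it. If you want to carry it out in the spirit of this paper, the natural tool is Lemma \ref{Omega}: a cycle assembled from a long intersection of a quasi-axis of $g$ with a peripheral coset (or from a long coarse intersection of two conjugate axes) has isolated components whose total $X$-length violates the linear bound of that lemma. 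That argument works, but it needs to be written out; until then the proposal has a genuine gap exactly where you say it does.
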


Let $$\mathcal H=\bigsqcup\limits_{i=1}^m H_i\setminus\{ 1\} .$$ Let
$q$ be a path in the Cayley graph $\G $. A (non--trivial) subpath
$p$ of $q$ is called an {\it $H_\lambda $--subpath} for some
$\lambda \in \{1,\ldots,m\}$, if the label of $p$ is a word in the
alphabet $H_\lambda\setminus \{ 1\} $. If $p$ is a maximal
$H_\lambda $--subpath of $q$, i.e. it is not contained in a bigger
$H_\lambda $--subpath, then $p$ is called an {\it $H_\lambda
$--component} (or simply a {\it component}) of $q$.

Two $H_\lambda $--subpaths (or $H_\lambda $--components) $p_1, p_2$
of a path $q$ in $\G $ are called {\it connected} if there exists a
path $c$ in $\G $ that connects some vertex of $p_1$ to some vertex
of $p_2$ and ${\phi (c)}$ is a word consisting of letters from $
H_\lambda\setminus\{ 1\} $. In algebraic terms this means that all
vertices of $p_1$ and $p_2$ belong to the same coset $gH_\lambda $
for a certain $g\in G$. Note that we can always assume that $c$ has
length at most $1$, as every nontrivial element of $H_\lambda
\setminus\{ 1\} $ is included in the set of generators.  An
$H_\lambda $--component $p$ of a path $q$ is called {\it isolated }
(in $q$) if no distinct $H_\lambda $--component of $q$ is connected
to $p$.

The following lemma is a particular case of \cite[Lemma 2.27]{Osi06a} applied to finitely generated relatively hyperbolic groups. Given a path $p$ in $\G$, we define its {\it $X$-length } by
$$
l_X (p)=\dx (p_-, p_+).
$$

\begin{lem}\label{Omega}
There exists a constant $L>0$ such that the following condition
holds. Let $q$ be a cycle in $\G $, $p_1, \ldots , p_k$ a set of
isolated components of $q$. Then we have
$$ Ll(q)\ge \sum\limits_{i=1}^k l_X (p_i).$$
\end{lem}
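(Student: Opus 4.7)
The plan is to reduce to the linear relative isoperimetric inequality characterization of relative hyperbolicity. By the equivalence of definitions of relatively hyperbolic groups established in \cite{DS}, the asymptotic-cone condition in Definition \ref{rhd} implies that $G$ admits a finite relative presentation $\la X\mid\mathcal R\cup\bigcup_i\mathcal S_i\ra$, where $\mathcal R$ is a finite set of words in $(X\cup\mathcal H)^{\pm 1}$ and $\mathcal S_i$ is a presentation of $H_i$ over the alphabet $H_i\setminus\{1\}$, whose relative Dehn function is linear: there is a constant $K$ such that any word in $X\cup\mathcal H$ of length $n$ representing $1$ in $G$ bounds a relative van Kampen diagram containing at most $Kn$ cells of type $\mathcal R$.

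The next step is to realize the cycle $q$ as the boundary of such a diagram $\Delta$, having at most $Kl(q)$ cells of type $\mathcal R$; the remaining cells are \emph{$\mathcal H$-cells}, whose boundary labels are words in a single $H_\lambda\setminus\{1\}$ representing $1$ in $H_\lambda$. For each isolated component $p_i$ of type $H_{\lambda_i}$, let $D_i\subset\Delta$ be the maximal connected subcomplex made of $\mathcal H$-cells of type $H_{\lambda_i}$ and edges labeled in $H_{\lambda_i}\setminus\{1\}$ that contains $p_i$ on its boundary. The key observation is that the subdiagrams $D_i$ are pairwise disjoint: if two distinct $p_i,p_j$ lay in the same such $D$, then the path in $D$ joining $(p_i)_-$ to $(p_j)_-$ would have label in $H_{\lambda_i}=H_{\lambda_j}$, putting the endpoints of $p_i$ and $p_j$ in a common coset of $H_{\lambda_i}$ and contradicting the isolation of $p_i$ in $q$. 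Writing the boundary cycle as $\partial D_i=p_is_i$, the same isolation argument shows that no edge of $s_i$ can lie on $\partial\Delta\setminus p_i$, so every edge of $s_i$ is a boundary edge of some $\mathcal R$-cell of $\Delta$.

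Since $s_i$ joins $(p_i)_+$ back to $(p_i)_-$, setting $l_X(e)=1$ for $X$-labeled edges and $l_X(e)=|\phi(e)|_X$ for $\mathcal H$-labeled edges, we obtain
\[
l_X(p_i)=\dist_X((p_i)_-,(p_i)_+)\le\sum_{e\in s_i}l_X(e).
\]
The pairwise disjointness of the $D_i$ forces the edge sets of the $s_i$ to be pairwise disjoint (in fact, edges labeled by elements of different $H_\lambda\setminus\{1\}$ are distinct in $\Delta$, since $\mathcal H$ is the disjoint union of the $H_i\setminus\{1\}$), so summing over $i$ yields
\[
\sum_{i=1}^k l_X(p_i)\;\le\;\sum_{\Pi\in\mathcal R\text{-cells of }\Delta}\sum_{e\in\partial\Pi}l_X(e)\;\le\;MKl(q),
\]
where $M$ is the maximum over $r\in\mathcal R$ of the total $X$-length of the word $r$, which is finite since $\mathcal R$ is finite. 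Hence one may take $L=MK$.

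The main obstacle is the first step: extracting a linear relative Dehn function from the asymptotic-cone definition \ref{rhd}. This is the substantive direction in the equivalence of the standard definitions of relative hyperbolicity, carried out in \cite{DS}. Granting this reduction together with the standard machinery of relative van Kampen diagrams from \cite{Osi06a}, the combinatorial counting above is routine, which is why the lemma is phrased as a particular case of \cite[Lemma 2.27]{Osi06a}.
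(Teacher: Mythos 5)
The paper offers no proof of this lemma at all: it is quoted as a special case of \cite[Lemma 2.27]{Osi06a}, and the proof of that result is precisely the van Kampen diagram argument you outline. Your reconstruction is correct in all essentials --- the reduction to a linear relative isoperimetric inequality (which, as you rightly flag, rests on the equivalence of Definition \ref{rhd} with Osin's definition, established in the appendix of \cite{DS}), the pairwise disjointness of the subdiagrams $D_i$ forced by isolation, and the bound on $\sum_i l_X(p_i)$ by the total $X$-length of $\mathcal H$-letters on boundaries of $\mathcal R$-cells --- so, up to the routine diagram technicalities (e.g.\ when $D_i$ is not a disc), it coincides with the cited proof rather than offering a different route.
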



\section{Tree products of metric spaces}\label{sec2}


Let $\pp=\{M_i\mid i\in I\}$ be a set of complete geodesic metric
spaces, and let $\ttt$ be the complete $\R$-tree of degree continuum \cite{MNO} explicitly constructed in \cite{EP}. In this section, we define a universal
tree-graded space with every piece isometric to a spaces from
$\pp$ and transversal trees isometric to $\ttt$. We shall call it the {\em tree product} of $\pp$ and denote
it by $\Pi\pp$.

Our construction of the tree product is similar in spirit to the construction from \cite{EP}. The idea is the following. For every point $y$ of a tree-graded
space $\free$ we consider a geodesic connecting that point with a
fixed base point $x$. Unlike in the case of trees in
\cite{EP}, there could be many geodesics connecting $x$ and $y$ in
$\free$. But by Lemma \ref{DS}, every such geodesic must pass
through a certain sequence of points depending only on $x, y$. These
are the points where the geodesic enters and exits pieces or the transversal trees.
Following a geodesic from $x$ to $y$, we can remember only the
entrance and exit points. Thus to the set of all geodesics
$c\colon [0,d]\to \free$ between two points $x,y$, we will
associate one locally constant function defined on a dense open
subset of $[0,d]$ whose values are pairs of distinct points from
pieces. The function is not defined in the ``break points'' where
the geodesic exits one piece (or transversal tree) and enters another. Given $x$, that
function uniquely describes $y$.

Obviously there are several necessary restrictions on the
functions we get stemming from the fact that geodesics cannot
backtrack, and that two ``consecutive'' values of the function do
not belong to the same piece (tree): there are no ``fake'' exits.

Here is the formal definition. We divide it into two
parts. First we define the set $\Pi\pp$ and then the metric.

\begin{defn}[tree product: the set] \label{set} For simplicity we shall denote the universal $\R$-tree of degree continuum $\ttt$ from \cite{MNO,EP} as $M_0$.\footnote{The definition is valid without this assumption. But if, say, we do not include a tree in the list of pieces at all, the resulting tree-graded space will have trivial (single point) transversal trees as follows from the description of geodesics in Lemma \ref{every} below.}
For every $\alpha\in I\cup\{0\}$ let $\Omega_\alpha$ be the set of
all pairs of distinct points of $M_\alpha$, i.e. $M_\alpha\times
M_\alpha \setminus \{(x,x) \mid x\in M_\alpha\}$. Let
$\Omega=\bigsqcup_{\alpha\in I\cup\{0\}}\Omega_\alpha$.

Let $\Pi\pp$ be the set of partial functions $f\colon \R\to
\Omega$ that consists of
all functions $f$ with the following properties:
\begin{itemize}
\item[($\Pi_1$)] The domain of $f$ is a dense open subset $A_f$ of
an open interval $]0,d(f)[$, $0\le d(f)<\infty$.\footnote{We
denote an open interval with endpoints $a,b$ by $]a,b[$,
half-open intervals by $]a,b], [a,b[$, and closed intervals by
$[a,b]$.} The only $f\in \Pi\pp$ with $d(f)=0$ is the empty
function and is denoted by $f_\emptyset$.

\item[($\Pi_2$)] $f$ is locally constant on $A_f$.

\item[($\Pi_3$)] For every maximal sub-interval $u$ of $A_f$ if
$f(u)= \{(x,y)\}$ then the length $|u|$ equals the distance
$\dist(x,y)$ between $x$ and $y$ in the corresponding $M_\alpha$,
$\alpha\in I\cup\{0\}$.

\item[($\Pi_4$)] ({\bf No fake exits.}) No two consecutive values
of $f$ have the form $(x_1,x_2), (x_2,x_3)$.

\item[($\Pi_5$)] ({\bf No backtracking.}) Let
$\overline{(x,y)}=(y,x)$ for every $(x,y)\in \Omega$. For every
$x\in \R$ let $\sigma_x$ be the symmetry in $\R$ with respect to
$x$. Then for every non-empty open sub-interval $]p-q,p+q[$ of
$]0,d(f)[$, the (partial) functions $\overline{f \circ
\sigma_p}$ and $f$ do not coincide on $]p-q,p+q[$.
\end{itemize}
\end{defn}

\begin{rem} \label{backward} The informal meaning of ($\Pi_4$)
is that the maximal sub-intervals of $A_f$ divide a geodesic into
maximal sub-geodesics each belonging to one piece (every exit
point should lead to a different piece). Note also that $A_f$ can have infinitely many connected components.

A function $f\colon ]0,d[\to \Omega$ as above will also be responsible for representing geodesics connecting the point $f$ to the point $f_{\emptyset}$. Similarly $f|_{]p,q[}$ corresponds to a geodesic $c_1$ connecting $f(p) $ to $f(q)$, and $\overline{f \circ \sigma_p}$ corresponds to a geodesic $c_2$ connecting $f(p)$ to $f(2p-q)$, which is symmetric to $c_1$ with respect to $f(p)$. Informally, the property ($\Pi_5$) means that $c_1$ and $c_2$ are never represented by the same function, which makes the correspondence between functions and geodesics well-defined.

Note also that if $I$ and all $M_\alpha $ have cardinality at most
continuum, then $\Pi M_\alpha$ has cardinality at most continuum as
well.
\end{rem}

In order to define a metric on $\Pi\pp$ we need some more notation
and a lemma.

\noindent \textit{Notation}: Let $f, g$ be two functions from
$\Pi\pp$. Let $\di(f,g)$ be the point of diverging, that is the
supremum of all numbers $s'$ such that the restrictions of $f$ and $g$ to
$]0,s'[\cap A_f$ coincide. We note that the set of $s'$ is non-empty as it
contains $0$.

\begin{lem} $\di(f,g)\not\in A_f\cup A_g$.
\end{lem}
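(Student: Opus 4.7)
My plan is to derive contradictions from the two assumptions $s := \di(f,g) \in A_f$ and $s \in A_g$ separately, since the definition of $\di(f,g)$ is asymmetric in the two arguments. The principal tools are $(\Pi_3)$, which ties the length of each maximal sub-interval of $A_f$ or $A_g$ to the distance between the two entries of the value taken on it; the density of $A_f$ and $A_g$ from $(\Pi_1)$; and the defining agreement of $f$ and $g$ on $]0,s[\cap A_f$.

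Assume first that $s \in A_f$, and let $u_f = ]a_f, b_f[$ be the maximal sub-interval of $A_f$ containing $s$, so that $f\equiv(x,y)\in\Omega_\alpha$ on $u_f$ and $b_f - a_f = \dist(x,y)$. The equality $f = g$ on $]a_f, s[ \subset ]0,s[\cap A_f$ forces $g\equiv(x,y)$ there, so this sub-interval lies inside a maximal sub-interval $u_g = ]a_g, b_g[$ of $A_g$ of the same length $\dist(x,y)$, with $a_g \le a_f$ and $b_g \ge s$. If $b_g > s$, a direct check shows $f = g$ on $]0, b_g[\cap A_f$, contradicting $\di(f,g) = s$. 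Otherwise $b_g = s$, and equal lengths force $a_g < a_f$; density of $A_f$ produces a point $t\in ]a_g,a_f[\cap A_f$, and because $t \in u_g$ the agreement yields $f(t)=g(t)=(x,y)$, so $t$ lies in a maximal sub-interval of $A_f$ of length $\dist(x,y)$. This sub-interval is trapped inside $]a_g,a_f[$ (using $a_g \notin A_f$, which holds because otherwise the agreement would put $a_g \in A_g$, contradicting $a_g$ being an endpoint of $u_g$), whose length $a_f - a_g$ is strictly less than $b_g - a_g = \dist(x,y)$; contradiction.

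Now assume $s\in A_g$; by the previous step, $s\notin A_f$. Write $g\equiv (x',y')$ on some open neighborhood of $s$ and use density of $A_f$ to pick $t_n \to s^-$ in $A_f$; the agreement on $]0,s[\cap A_f$ gives $f(t_n) = g(t_n) = (x',y')$ for large $n$. Because $(\Pi_3)$ fixes the length of any maximal sub-interval of $A_f$ carrying the value $(x',y')$ to be $\dist(x',y')$, eventually all $t_n$ lie in a single such maximal sub-interval $u_f = ]a_f, b_f[$ of $A_f$; and since $t_n \to s \notin A_f$, necessarily $b_f = s$. If $u_g$ is the maximal sub-interval of $A_g$ around $s$, the agreement $f=g$ on $u_f$ places $u_f \subseteq u_g$, and equal lengths force $u_f = u_g$, which is impossible because $s = b_f \notin u_f$ while $s \in u_g$. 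The main obstacle is the sub-case $b_g = s$ of the first step, where one must rule out a ``shifted'' configuration of $u_g$ relative to $u_f$; it is here that $(\Pi_1)$ (density), $(\Pi_3)$ (equal lengths), and the definition of $\di(f,g)$ must be combined with some care.
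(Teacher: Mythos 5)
Your proof is correct and follows essentially the same route as the paper's: it uses $(\Pi_3)$ together with the agreement of $f$ and $g$ on $]0,s[$ to pin down the maximal constancy intervals on either side of $s=\di(f,g)$ and derives a contradiction with the maximality of $s$ (or with the length constraint of $(\Pi_3)$ itself). You spell out the ``shifted interval'' sub-case that the paper dispatches in a single sentence, and you treat $s\in A_g$ separately where the paper appeals to symmetry, but the underlying argument is the same.
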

\proof The statement is obviously true if $s=0$. If $s>0$ and if
for instance $s\in A_f$ then there exists a maximal interval
$]s-a,s+b[$ on which $f$ is constant equal to $(x,y)$ with
$\hbox{dist}(x,y)=b+a$. On $]s-a,s[$ the function $g$ is also
constant, equal to $(x,y)$. Now ($\Pi _3$) implies that $g$ is constant equal
to $(x,y)$ on $]s-a,s+b[$. This contradicts the maximality of
$s$.\endproof

\noindent \textit{Notation}: If $s=\di(f,g)$, let $]s,a_g(f)[$ and
$]s,a_f(g)[$ be the maximal (possibly empty) sub-intervals of
$A_f$ and $A_g$ respectively. If $a_g(f)\ne s$ and $a_f(g)\ne s$
then $f$ is constant on $]s,a_g(f)[$, $g$ is constant on $]s,a_f(g)[$.
In that case let us denote the values of $f$ and $g$ on these
(non-empty) intervals by $(x_g(f), y_g(f))$ and $(x_f(g), y_f(g))$
respectively.

Notice that if $x_f(g)=x_g(f)$ then, informally speaking, $a_g(f)\ne s$, $a_f(g)\ne s$ means
that after the two geodesics corresponding to $f, g$ diverge, they
still travel in the same piece or on the same transversal tree for some time,
but they exit this piece or tree through different points $y_g(f)$
and $y_f(g)$ respectively ($y_g(f)$ and $y_f(g)$ are necessarily
different because otherwise $s$ would not be the divergence
point).

Now we are ready to define a metric on $\Pi\pp$.

\begin{defn}[tree product: the metric]\label{metric}
Let $f,g\in \Pi\pp$. Define the number $D(f,g)$ as follows.

\noindent {\bf Case 1.} Suppose that $a_g(f)\ne \di(f,g)$,
$a_f(g)\ne \di(f,g)$, $x_f(g)=x_g(f)$. Then set

$$D(f,g)=d(f) -a_g(f) +d(g)
-a_f(g) + \dist(y_g(f),y_f(g)).
$$

\noindent{\bf Case 2.} If $a_g(f)=\di(f,g)$ or $a_f(g)=\di(f,g)$
or $x_f(g)\ne x_g(f)$ then we set
$$D(f,g)=d(f)-s+d(g)-s.$$
\end{defn}

\begin{rem} It is easy to see that the function $D$ that we
have just defined is the most natural candidate for a metric in
$\Pi\pp$. Informally speaking if $f, g$ correspond to points $u,
v$, and $c, c'$ are geodesics connecting $u, v$ to the base point
in a tree-graded space then in Case 1, every geodesic connecting
$u$ with $v$ must pass through the points $y_g(f)$ and $y_f(g)$,
in particular the curve
$$c\iv|_{]d(f),a_g(f)[} \cup [y_g(f),y_f(g)]\cup c'|_{]a_f(g),d(g)[}$$ should be a
geodesic between $u$ and $v$ whose length is $$D(f,g)=d(f) -a_g(f)
+d(g) -a_f(g) + \dist(y_f,y_g).
$$
In Case 2, every geodesic from $u$ to $v$ must pass through
$c(\di(f,g))$ and the curve $$c\iv|_{]d(f), \di(f,g)[}\cup
c'|_{]\di(f,g),d(g)[}$$ should be a geodesic connecting $u$ and
$v$ whose length is $$D(f,g)=d(f)-s+d(g)-s.$$

\end{rem}

We say that a pair of functions $(f,g)$ is \textit{of type} 1 if
they satisfy the conditions of Case 1. Otherwise we say that the
pair of functions $(f,g)$ is \textit{of type} 2.

\noindent \textit{Notation}: For every $f\in \Pi\pp$ and every
open interval $]a, b[\subset ]0,d(f)[$ we denote by $f|_{]a, b[}$ the
restriction of $f$ onto $]a, b[$. We denote $f|^\iota_{]a, b[}$
the partial function on $]0 ,b-a[ $ obtained from $f|_{]a, b[}$ by
pre-composition with the addition with $a$. It is obvious that
$f|^\iota_{]a,b[}$ belongs to $\Pi\pp$.

Let $f,g\in \Pi\pp$. We denote $f \sqcup g$ the function defined
on $]0, d(f)+d(g)[$ which coincides with $f$ on $]0,d(f)[$ and
with $g$ pre-composed with the subtraction of $d(f)$ on $(d(f),
d(f)+d(g))$. We note that it is not always true that $f\sqcup g$
is in $\Pi\pp$, as conditions ($\Pi_4$) or ($\Pi_5$) might not be
satisfied.

\begin{rem} Informally, $f\sqcup g$ is the function corresponding to the curve
obtained as a composition of the geodesic corresponding to $f$ and
a geodesic ``parallel'' to the geodesic corresponding to $g$. The
resulting curve may be not a geodesic because, for example, some
backtracking can occur at the point where the geodesics meet.
\end{rem}

\noindent\textit{Notation}: For every $(x,y)\in \Omega $ we denote
by $\mathbf{C}_{(x,y)}$ the function defined on $]0,\dist(x,y)[$
and constant equal to $(x,y)$. It is obvious that
$\mathbf{C}_{(x,y)}$ belongs to $\Pi\pp$.

\begin{lem}\label{D}
(1) For every $f,g\in \Pi\pp$ we have
$$
|d(f)-d(g)|\leq D(f,g)\leq d(f)-\di(f,g)+d(g)-\di(f,g).
$$

(2) For every $f,g_1,g_2 \in \Pi\pp$ such that $f\sqcup g_1,
f\sqcup g_2 \in \Pi\pp$ we have
$$
D(f \sqcup g_1,f \sqcup g_2 )=D(g_1,g_2)\, .
$$

(3) If $(f,g)$ is of type 1 then $D(f,g)>0$.
\end{lem}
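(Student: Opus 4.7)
Write $s=\di(f,g)$ throughout. For (1), I would do a case analysis according to whether $(f,g)$ is of type 1 or type 2. In type 2 the upper bound is immediate from the definition of $D$. In type 1 I would use property ($\Pi_3$) to identify $a_g(f)-s=\dist(x_g(f),y_g(f))$ and $a_f(g)-s=\dist(x_f(g),y_f(g))$ (because $]s,a_g(f)[$ and $]s,a_f(g)[$ are maximal subintervals of $A_f,A_g$ on which $f,g$ are constant, and $s\notin A_f\cup A_g$ by the preceding lemma), and then combine the hypothesis $x_g(f)=x_f(g)$ with the triangle inequality $\dist(y_g(f),y_f(g))\leq (a_g(f)-s)+(a_f(g)-s)$ to collapse $D(f,g)$ to at most $d(f)+d(g)-2s$. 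For the lower bound $|d(f)-d(g)|\le D(f,g)$, in type 2 it follows from $s\le\min(d(f),d(g))$; in type 1 it comes from the reverse triangle inequality $\dist(y_g(f),y_f(g))\geq |a_g(f)-a_f(g)|$, after rewriting $D(f,g)-(d(f)-d(g))$ as $2(d(g)-a_f(g))+(a_f(g)-a_g(f))+\dist(y_g(f),y_f(g))$ and noting $d(g)\ge a_f(g)$.

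For (2) I would first establish the key identity $\di(f\sqcup g_1,f\sqcup g_2)=d(f)+\di(g_1,g_2)$. This uses only that $f\sqcup g_1$ and $f\sqcup g_2$ literally agree on $]0,d(f)[$, that $d(f)\notin A_{f\sqcup g_i}$, and that $A_{f\sqcup g_i}\cap\,]d(f),\infty[$ is the translate of $A_{g_i}$ by $d(f)$. From this the data that enters the definition of $D$ transports in the obvious way: $a_{f\sqcup g_2}(f\sqcup g_1)=d(f)+a_{g_2}(g_1)$ and similarly for the other $a$-value, while $x_{\cdot}(\cdot)$ and $y_{\cdot}(\cdot)$ are the same as for $(g_1,g_2)$. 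Consequently $(f\sqcup g_1,f\sqcup g_2)$ is of type 1 (resp.\ 2) iff $(g_1,g_2)$ is, and substituting into the formulas in Definition \ref{metric} cancels the copies of $d(f)$ to give $D(f\sqcup g_1,f\sqcup g_2)=D(g_1,g_2)$.

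For (3), in type 1 we have $a_g(f)\le d(f)$ and $a_f(g)\le d(g)$, so $D(f,g)\ge \dist(y_g(f),y_f(g))$; it remains to show $y_g(f)\ne y_f(g)$. If they were equal, then combined with $x_g(f)=x_f(g)$ and ($\Pi_3$), both $f$ and $g$ would be constant with the same value $(x,y)$ on the common interval $]s,s+\dist(x,y)[$, where $\dist(x,y)=a_g(f)-s=a_f(g)-s$. Together with the coincidence of $f,g$ on $]0,s[\cap A_f$ this would extend agreement of $f$ and $g$ to $]0,s+\dist(x,y)[\cap A_f$, contradicting the maximality of $s=\di(f,g)$. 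Hence $\dist(y_g(f),y_f(g))>0$ and $D(f,g)>0$.

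The only genuinely delicate point is the exact identification $a_g(f)-s=\dist(x_g(f),y_g(f))$ in (1) and the sharp computation of the new divergence point in (2); both rest on the earlier observation that $s\notin A_f\cup A_g$, together with the fact that the connected components of $A_f$ carry constant values whose lengths are pinned by ($\Pi_3$). Everything else is bookkeeping with the formulas in Definition \ref{metric}.
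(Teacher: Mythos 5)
Your proposal is correct and follows essentially the same route as the paper: the same identification $a_g(f)-s=\dist(x_g(f),y_g(f))$ via ($\Pi_3$), the same triangle-inequality manipulations for (1), the transport of $\di$, $a$, $x$, $y$ under $\sqcup$ for (2) (which the paper simply declares immediate from the definition), and the same maximality-of-$\di(f,g)$ contradiction for (3). The extra detail you supply for (2) and (3) is sound and fills in exactly what the paper leaves to the reader.
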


\proof (1) Suppose that $(f,g)$ is of type 1. Let
$x=x_f(g)=x_g(f)$. We begin with the second inequality. We have
$a_g(f)=\di(f,g)+\dist(x,y_g(f))$ and
$a_f(g)=\di(f,g)+\dist(x,y_f(g))$. Hence
$D(f,g)=d(f)-\di(f,g)+d(g)-\di(f,g)
-(\dist(x,y_g(f))+d(x,y_f(g))-\dist(y_g(f),y_f(g)))\leq
d(f)-\di(f,g)+d(g)-\di(f,g)$ by the triangle inequality that
satisfied by $\dist$.

To prove the first inequality it suffices to prove that
$2(d(g)-\di(f,g))-[d(x,y_g(f))+d(x,y_f(g))-d(y_g(f),y_f(g))]\geq
0$. Since $\dist(x,y_f(g))\le d(g)-\di(f,g)$, it is enough to show
that $d(g)-\di(f,g)\geq d(x,y_g(f)-\dist(y_g(f),y_f(g))$. This
follows again from the triangle inequality since $d(g)-\di(f,g)\ge
\dist(x,y_f(g))$ and $\dist(x,y_f(g))\ge
d(x,y_g(f)-\dist(y_g(f),y_f(g))$.

Suppose now that $(f,g)$ is of type 2 then the second inequality
is obvious (it is in fact equality). The first inequality follows
from the obvious fact that $\di(f,g)\leq \min (d(f),d(g))$.

Statement (2) is an immediate consequence of the definition of
$D$.

(3) If $(f,g)$ is of type 1 then $D(f,g)=0$ implies that
$y_f=y_g$. The contradicts the maximality of $\di(f,g)$.\endproof

\begin{prop}\label{trin}
$D$ is a metric on $\Pi\pp$.
\end{prop}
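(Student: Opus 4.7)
The plan is to verify the three metric axioms in turn: non-negativity together with non-degeneracy, symmetry, and the triangle inequality.

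\emph{Non-negativity and non-degeneracy.} When $(f,g)$ is of type~1, $D(f,g)>0$ is Lemma~\ref{D}(3). When $(f,g)$ is of type~2, $D(f,g)=d(f)+d(g)-2\di(f,g)\ge 0$ since $\di(f,g)\le \min(d(f),d(g))$; and $D(f,g)=0$ forces $d(f)=d(g)=\di(f,g)$, so $f$ and $g$ agree on the dense open set $A_f\cap\, ]0,d(f)[$. A short density and local constancy argument combined with property~$(\Pi_3)$ (which fixes the length of each maximal constancy interval by the distance of its value) then forces $A_f=A_g$ and $f=g$ as partial functions.

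\emph{Symmetry.} One first checks that the asymmetric-looking supremum defining $\di(f,g)$ is symmetric: if $f=g$ on $A_f\cap\, ]0,s'[$, then by density of $A_g$ in $]0,s'[$ and local constancy of $g$, the maximal constancy intervals of $f$ and $g$ in $]0,s'[$ match (matching values and, by $(\Pi_3)$, matching lengths), so $A_f\cap\, ]0,s'[\, =\, A_g\cap\, ]0,s'[$ and the equality propagates. Granted $\di(f,g)=\di(g,f)$, the formulas in Cases~1 and~2 of Definition~\ref{metric} are manifestly symmetric in $f$ and $g$.

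\emph{Triangle inequality.} This is the main effort. I would first establish the tree-like (four-point) property of the divergence function: for any $f,g,h\in\Pi\pp$ one has $\di(f,h)\ge\min(\di(f,g),\di(g,h))$, and among the three numbers $s_{fg}:=\di(f,g)$, $s_{gh}$, $s_{fh}$ the minimum is attained at least twice. Setting $s=\min s_*$ and using Lemma~\ref{D}(2) to strip the common initial segment of length $s$, I may assume $s=0$. Up to the symmetry $f\leftrightarrow h$, three configurations then remain: (i) all three divergences vanish; (ii) $s_{fg}=s_{fh}=0<s_{gh}$ (so $f$ is the odd one out); (iii) $s_{fg}=s_{gh}=0<s_{fh}$ (so $g$ is the odd one out, with $f$ and $h$ sharing a nontrivial initial segment).

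Case (iii) is the easiest: since $f=h$ on an initial interval they enter the same first piece with the same entry point, so by $(\Pi_3)$ the exit data $y_g(f),y_g(h)$ coincide, the full length $d(g)$ appears in both $D(f,g)$ and $D(g,h)$, and Lemma~\ref{D}(1) gives $D(f,g)+D(g,h)\ge d(f)+d(h)\ge D(f,h)$ with room to spare. Cases (i) and (ii) are the heart of the matter: in each, at least two of the three pairs enter a single piece $M_\alpha$ right after the common divergence point, and the triangle inequality for $D$ reduces, through the explicit formulas in Definition~\ref{metric}, to the triangle inequality for $\dist_{M_\alpha}$ applied to the relevant exit points $y_f,y_g,y_h\in M_\alpha$ together with $(\Pi_3)$. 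The main obstacle is bookkeeping: one must enumerate the type-1/type-2 possibilities for each of the three pairs, rule out the combinatorially impossible combinations (for instance, in case (i) one checks that if two pairs are type~1 then so is the third, since the common first-piece entry points must coincide), identify which exit points $y_\cdot$ coincide in the surviving subcases, and combine a single application of the triangle inequality in $M_\alpha$ with the arithmetic of Definition~\ref{metric}. Each individual subcase collapses cleanly, but the enumeration itself is the bulk of the work.
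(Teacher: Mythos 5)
Your overall strategy is the same as the paper's: an exhaustive case analysis on how the three divergence points $\di(f,g),\di(g,h),\di(f,h)$ relate, refined by the type (1 or 2) of each pair, with each surviving subcase reducing to the triangle inequality for $\dist$ in a single piece. The normalization to $s=0$ via Lemma \ref{D}(2) and the ultrametric property of $\di$ is a pleasant repackaging of the paper's split into Case I ($\di(f,h)=\di(h,g)$) and Case II ($\di(f,h)\ne\di(h,g)$), but it does not change the substance: your cases (i) and (ii), which you correctly identify as the bulk of the work and leave as a plan, are exactly the subcases I(a)--(c) and II(a)--(b) that the paper writes out in full.

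There is, however, a genuine error in the one case you do work out, case (iii) ($\di(f,g)=\di(g,h)=0<\di(f,h)$). The claim that ``the full length $d(g)$ appears in both $D(f,g)$ and $D(g,h)$'' and hence $D(f,g)+D(g,h)\ge d(f)+d(h)$ is false when those pairs are of type 1. Take a single piece $M$, points $x,y,z\in M$ with $\dist(x,y)=\dist(x,z)=10$ and $\dist(y,z)$ small, and set $f=h=\mathbf{C}_{(x,y)}$, $g=\mathbf{C}_{(x,z)}$. Then $\di(f,g)=\di(g,h)=0$, both pairs are of type 1 with $a_g(f)=d(f)$ and $a_f(g)=d(g)$, so $D(f,g)=D(g,h)=\dist(y,z)$, and $D(f,g)+D(g,h)$ is tiny while $d(f)+d(h)=20$. (The triangle inequality itself survives since $D(f,h)=0$, but your argument for it does not.) Only the sub-cases where $(f,g)$ or $(g,h)$ is of type 2 behave as you describe; when both are of type 1, all three functions enter the same piece at the same point $x$, and one must argue as in your cases (i)--(ii): the first constancy intervals of $f$ and $h$ carry the same value, so by $(\Pi_3)$ they share the exit point $y_1=y_g(f)=y_g(h)$ and the exit time $a$, whence $\di(f,h)\ge a$ and
$D(f,g)+D(g,h)= d(f)+d(h)-2a+2\bigl(d(g)-a_f(g)\bigr)+2\dist(y_1,y_f(g))\ge d(f)+d(h)-2\di(f,h)\ge D(f,h)$
by Lemma \ref{D}(1). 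So case (iii) is not ``easy with room to spare''; it needs the same piece-level bookkeeping as the others, which is precisely what the paper's Case II carries out.
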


\begin{proof}
(1)\quad Suppose that $D(f,g)=0$. Then $(f,g)$ is of type 2 and
$d(f)=d(g)=\di(f,g)$, so $f=g$ (since $f$ and $g$ coincide on
$(0,\di(f,g))$.
(2)\quad Let $f,g,h$ be arbitrary functions in $\Pi\pp$. Let us
check the triangle inequality:
$$
D(f,g)\leq D(f,h)+D(h,g).
$$

\noindent{\bf Case I.} Suppose $\di(f,h)=\di(h,g)=s$. It follows
that $\di(f,g)\geq s$.

(a) Suppose that $(f,h)$ and $(h,g)$ are both of type 1. Then
either $\di(f,g)=s$ and then $a_g(f)=a_h(f)$, $a_f(g)=a_h(g)$,
$y_g(f)=y_h(f)$, $y_g(h)=y_f(h)$ and $y_f(h)=y_g(h)$, or
$\di(f,g)>s$ and then $\di(f,g)>a_h(f)$, $\di(f,g)>a_h(g)$.

We have $D(f,h)+D(h,g)=
d(f)-a_h(f)+d(h)-a_g(h)+\dist(y_h(f),y_f(h))+d(h)-a_f(h)+d(g)-a_h(g)+\dist(y_h(g),y_g(h))\geq
d(f)-a_h(f) + d(g)-a_h(g) +
\dist(y_h(f),y_f(h))+\dist(y_h(g),y_g(h))$.

In the case $\di(f,g)=s$, $a_g(f)=a_h(f)$, $a_f(g)=a_h(g)$,
$y_g(f)=y_h(f)$, $y_f(h)=y_g(h)$ the last term is not smaller than
$D(f,g)=d(f)-a_g(f)+d(g)-a_f(g)+\dist(y_g(f),y_f(g))$ by the
triangle inequality for $\dist$.

In the case $\di(f,g)>s$, $\di(f,g)>a_h(f)$, $\di(f,g)>a_h(g)$  we
continue with $D(f,h)+D(h,g)\geq d(f)-\di(f,g) + d(g)-\di(f,g)$
and it remains to apply Lemma \ref{D}, (1).

(b) If $(f,h)$ is of type 1 and $(h,g)$ of type 2 then
$\di(f,g)=s$ and $(f,g)$ is of type 2. We have $$D(f,h)+D(h,g) =
d(f)-a_h(f)+d(h)-a_f(h)+\dist(y_h(f),y_f(h))+d(h)-s+d(g)-s.$$
Since $\dist(y_h(f),y_f(h))+(d(h)-s)\ge
\dist(y_h(f),y_f(h)+(a_f(h)-s)=\dist(y_h(f),y_f(h))+\dist(y_f(h),x_h(f))
\ge \dist(y_h(f),x_h(f))=a_h(f)-s$ by the triangle inequality, we
can deduce that $$D(f,h)+D(h,g)\ge
d(f)-a_h(f)+a_h(f)-s+d(g)-s=d(f)-s+d(g)-s=D(f,g).$$

(c) If $(f,h)$ and $(h,g)$ are both of type 2 then
$D(f,h)+D(h,g)=d(f)-s+2(d(h)-s)+d(g)-s\geq d(f)-s+d(g)-s\geq
D(f,g)$.

\noindent{\bf Case II.} Suppose that $\di(f,h)\neq \di(h,g)$.
Without loss of generality we assume that $\di(f,h)> \di(h,g)$. It
follows that $\di(f,g)=\di(h,g)=s$ and that $(f,g)$ and $(h,g)$
are of the same type.

(a) Suppose that $(f,g)$ and $(h,g)$ are of type 1. Then
$a_f(g)=a_h(g)=a_1, a_g(h)=a_g(f) =a_2, y_f(g)=y_h(g) =y_1,
y_g(h)=y_g(f)=y_2$.

(a.1) Suppose that $(f,h)$ is also of type 1. We have
$D(f,h)+D(h,g)=d(f)-a_h(f)+d(h)-a_f(h)+\dist(y_h(f),
y_f(h))+d(h)-a_1+d(g)-a_2+\dist(y_1,y_2)\geq
d(f)-a_1+\dist(y_1,y_2)+d(g)-a_2=D(f,g).$

(a.2) Suppose that $(f,h)$ is of type 2. Then
$D(f,h)+D(h,g)=d(f)-\di(f,h)+d(h)-\di(f,h)+d(h)-a_1+d(g)-a_2+\dist(y_1,y_2)\geq
d(f)-\di(f,h) +\di(f,h)-a_1+d(g)-a_2)+\dist(y_1,y_2)=D(f,g)$.

(b) Suppose that $(f,g)$ and $(h,g)$ are of type 2.

(b.1) Suppose that $(f,h)$ is of type 1. We have
$D(f,h)+D(h,g)=d(f)-a_h(f)+d(h)-a_f(h)+\dist(y_h(f), y_f(h))+
d(h)-s+d(g)-s\geq d(f)-a_h(f)+d(y_h(f),
y_f(h))+a_f(h)-s+d(g)-s\geq d(f)-s+d(g)-s=D(f,g)$.

(b.2) Suppose that $(f,h)$ is of type 2. Then $D(f,h)+D(h,g)=d(f)
- \di(f,h)+d(h)- \di(f,h)+d(h)-s+d(g)-s\geq d(f) -
\di(f,h)+\di(f,h)-s+d(g)-s=D(f,g)$.
\end{proof}


\section{Geometric structure of tree products}
\label{sec3}

The main goal of this section is to prove the following.

\begin{thm}\label{tpmain}
Let $\pp=\{M_i\mid i\in I\}$ be a set of complete geodesic metric
spaces. Then the tree-product $\Pi\pp$ is a complete, homogeneous tree-graded metric space, where every piece is
isometric to one of the $M_\alpha\in\pp$ and every transversal tree $T_x$ is isometric to $\ttt$.
\end{thm}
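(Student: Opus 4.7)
For any $f\in\Pi\pp$, the truncations $f_t := f|_{]0,t[}$ form a geodesic from $f_\emptyset$ to $f$: the pair $(f_s,f_t)$ is of type~2 with $\di(f_s,f_t)=\min(s,t)$, so $D(f_s,f_t)=|s-t|$. For arbitrary $f,g\in\Pi\pp$ with $s=\di(f,g)$, a geodesic from $f$ to $g$ is assembled from (i) the reverse of $f_t$ going from $f$ to $f_s$, (ii) in Case~1 a geodesic in the common piece $M_\alpha$ from $y_g(f)$ to $y_f(g)$, and (iii) a forward segment ending at $g$; its total length equals $D(f,g)$ by Definition~\ref{metric}. For each admissible triple $(f,\alpha,x)$ with $\alpha\in I$ and $x\in M_\alpha$, set
\[
P(f,x,\alpha) := \{f\}\cup\{\,f\sqcup\mathbf{C}_{(x,y)} : y\in M_\alpha\setminus\{x\},\ f\sqcup\mathbf{C}_{(x,y)}\in\Pi\pp\,\}.
\]
A Case~1 computation in which both ``tail'' terms $d(f_i)-a_{\cdot}(f_i)$ collapse to zero shows that $y\mapsto f\sqcup\mathbf{C}_{(x,y)}$ is an isometric embedding of $M_\alpha$ onto $P(f,x,\alpha)$; the collection of such $P(f,x,\alpha)$ will serve as the pieces.

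\textbf{Tree-graded axioms and transversal trees.} Both $(T_1)$ and $(T_2)$ reduce to the no-backtracking condition $(\Pi_5)$. If two distinct pieces shared two distinct points $g\neq h$, the two geodesics from $g$ to $h$ (one through each piece's basepoint) would produce a function violating $(\Pi_5)$ on a subinterval. For $(T_2)$, Lemma~\ref{DS}(c) combined with the explicit geodesic description forces each side of a simple geodesic triangle to traverse the same sequence of pieces, and the only consistent possibility is containment in one piece. The transversal trees $T_f$ are defined by the same formula as the pieces but using $\alpha=0$; since $M_0=\ttt$ is itself a complete $\R$-tree of degree continuum, iterated gluing yields a complete $\R$-tree with continuum branching at every point, which by the Mayer--Nikiel--Oversteegen uniqueness theorem is isometric to $\ttt$.

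\textbf{Completeness.} A Cauchy sequence $(f_n)\subset\Pi\pp$ has $d(f_n)\to D\in[0,\infty)$ and pairwise divergences $\di(f_n,f_m)\to D$. Taking an exhaustion of $]0,D[$ by sub-intervals on which the $f_n$ eventually stabilize produces a limit function $f_\infty$ on a dense open subset of $]0,D[$; conditions $(\Pi_1)$--$(\Pi_5)$ are local and transfer to $f_\infty$, giving $f_\infty\in\Pi\pp$ and $D(f_n,f_\infty)\to 0$.

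\textbf{Homogeneity --- the main obstacle.} For each $f$, I would build an isometry $\Phi_f\colon\Pi\pp\to\Pi\pp$ with $\Phi_f(f_\emptyset)=f$ by setting $\Phi_f(g):=f\cdot g$, where $\cdot$ denotes the concatenation $f\sqcup g$ followed by a reduction that cancels any $(\Pi_4)$ or $(\Pi_5)$ violation at the junction. The inverse is $\Phi_{f^{-1}}$, with $f^{-1}$ obtained by reversing the parametrization of $f$ and flipping each pair value. The delicate step is the reduction at the junction: when $g$'s initial interval runs through the same piece in which $f$ terminated, one must reroute inside that piece using an isometry of $M_\alpha$ (invoking the assumption, implicit in the setting of Theorem~\ref{main1}, that each $M_\alpha$ is homogeneous). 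Once $\Phi_f$ is well-defined, the equality $D(\Phi_f(g),\Phi_f(h))=D(g,h)$ follows from a case analysis mirroring Proposition~\ref{trin}. I expect this to be the main obstacle, because the junction reduction must simultaneously respect $(\Pi_4)$ and $(\Pi_5)$, and the distance verification involves every case of Definition~\ref{metric}.
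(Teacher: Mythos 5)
Your overall architecture (explicit geodesics via truncation, pieces of the form $\{f\}\cup\{f\sqcup\mathbf{C}_{(x,y)}\}$, Mayer--Nikiel--Oversteegen uniqueness for the transversal trees) matches the paper's, but there are genuine gaps. The most serious is in homogeneity: you invoke homogeneity of the pieces $M_\alpha$ to perform the ``junction reduction,'' but Theorem \ref{tpmain} makes no such assumption --- the $M_i$ are arbitrary complete geodesic spaces, and the conclusion that $\Pi\pp$ is homogeneous must hold regardless. The paper's $\Phi_f$ needs no isometry of $M_\alpha$ at the junction: when the geodesics to $f$ and to $g$ traverse the same piece, entering at a common point $x$ and exiting at $y_g(f)$ and $y_f(g)$ respectively, the rerouted function simply carries the constant value $(y_g(f),y_f(g))$ on an interval of length $\dist(y_g(f),y_f(g))$. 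Since the values of the functions are \emph{pairs} of points of $M_\alpha$ (entry and exit), no point of the piece ever needs to be moved by an isometry of the piece; only the recorded entry point changes. Your version, as written, proves homogeneity only under an extra hypothesis, i.e.\ a strictly weaker theorem.

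Two further problems. First, your completeness argument never uses completeness of the $M_\alpha$, which is essential and cannot be avoided: a Cauchy sequence $(f_n)$ may stabilize only on $]0,s[$ with $s<D=\lim d(f_n)$, the tails being constant segments $\mathbf{C}_{(x,y_n)}$ in a single piece with $(y_n)$ Cauchy in $M_\alpha$; the limit function exists precisely because $y_n$ converges in $M_\alpha$. The claim that ``conditions $(\Pi_1)$--$(\Pi_5)$ are local and transfer'' also fails for $(\Pi_3)$, which constrains the length of \emph{maximal} intervals and is not a local condition. Second, your verification of $(T_2)$ cites Lemma \ref{DS}(c), which is a property \emph{of} tree-graded spaces; using it to establish that $\Pi\pp$ is tree-graded is circular. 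The paper instead first classifies all geodesics between $f_\emptyset$ and $f$ (Lemma \ref{every}), deduces directly that every simple geodesic bigon lies in one piece, and then reduces simple triangles to bigons.
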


The proof is divided into a sequence of lemmas.

\begin{lem}\label{complete}
$\Pi\pp $ is complete.
\end{lem}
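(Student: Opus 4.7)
Start with a Cauchy sequence $(f_n)\subset\Pi\pp$. By Lemma \ref{D}(1), $d(f_n)$ is Cauchy in $\R$ and converges to some $d\ge 0$. If $d=0$, direct inspection of Definition \ref{metric} (Case 2, with $s=0$ since $A_{f_\emptyset}=\emptyset$) yields $D(f_n,f_\emptyset)=d(f_n)\to 0$, so $f_n\to f_\emptyset$. Assume henceforth $d>0$ and pass to a subsequence with $D(f_n,f_{n+1})<2^{-n}$; since $(f_n)$ is Cauchy, it suffices to find a limit of this subsequence.

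Write $s_n=\di(f_n,f_{n+1})$. Two structural facts from Definition \ref{metric} drive the argument. If $(f_n,f_{n+1})$ is of type 2, then $D(f_n,f_{n+1})=(d(f_n)-s_n)+(d(f_{n+1})-s_n)$, forcing $s_n>d-3\cdot 2^{-n}$. If of type 1, then $s_n$ may be arbitrarily small, but the two functions agree on $]0,s_n[\cap A_{f_n}$, both enter a common piece $M_{\alpha_n}$ at the shared entry point $x_n=x_{f_n}(f_{n+1})=x_{f_{n+1}}(f_n)$, exit at points $y_{f_{n+1}}(f_n),y_{f_n}(f_{n+1})\in M_{\alpha_n}$ of piece-distance $<2^{-n}$, and have their respective exit parameters within $2^{-n}$ of $d(f_n),d(f_{n+1})$. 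A subsidiary structural lemma, using ($\Pi_3$) and ($\Pi_4$) to pin down that maximal constancy intervals of $A_f$ are determined by the values of $f$ on any dense subset of themselves, shows that $\di$ is essentially symmetric and satisfies $\di(f,h)\ge\min\{\di(f,g),\di(g,h)\}$. Consequently, if $s_n>t$ for all $n\ge N$, the restrictions $f_n|_{]0,t[\cap A_{f_n}}$ for $n\ge N$ coincide as partial functions, and $A_{f_n}\cap\,]0,t[$ is independent of $n$.

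Set $T=\liminf_n s_n\in[0,d]$ and split into two cases. If $T=d$, then for any $t<d$ the preceding observation yields a common restriction, and gluing across $t\uparrow d$ defines a function $f$ on an open dense $A_f\subset\,]0,d[$ with $d(f)=d$, inheriting ($\Pi_1$)--($\Pi_5$) locally from the $f_n$; Lemma \ref{D}(1) then gives $D(f,f_n)\le 2(d-\di(f,f_n))\to 0$. If $T<d$, then since type 2 forces $s_n$ close to $d$, infinitely many consecutive pairs must be of type 1 with $s_n$ near $T$, and all $f_n$ for $n$ past some index share a common prefix $h$ of length $T$ and, just beyond $T$, enter a common piece $M_\alpha\in\pp$ at a common entry point $x$ (both determined by $h$). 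The exit points $w_n\in M_\alpha$ form a Cauchy sequence with $\dist(w_n,w_{n+1})<2^{-n}$ and $\dist(x,w_n)\to d-T$, so completeness of $M_\alpha$ gives $w_n\to y_\infty$ with $\dist(x,y_\infty)=d-T$. Define $f$ to be $h$ on $]0,T[$ followed by the constant function $\mathbf{C}_{(x,y_\infty)}$ on $]T,d[$.

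The main obstacle is to verify that this $f$ lies in $\Pi\pp$ and to compute $D(f,f_n)\to 0$. Conditions ($\Pi_1$)--($\Pi_3$) are immediate from the construction. The delicate points are ($\Pi_4$) at the junction $T$, where the last value of $h$ before $T$ must not have $x$ as its second coordinate---inherited from ($\Pi_4$) for $f_n$ with $n$ large, since $f_n$ has no fake exit there and matches $h$ on a dense neighborhood of $T$ from below---and ($\Pi_5$), which, if violated by $f$ on some $]p-q,p+q[\subset\,]0,d[$, would force the same backtracking in all $f_n$ with $n$ sufficiently large (since $f$ and $f_n$ agree throughout that sub-interval for such $n$), contradicting ($\Pi_5$) for $f_n$. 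A direct application of Definition \ref{metric} then yields $D(f,f_n)\to 0$.
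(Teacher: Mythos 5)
Your proof is correct and follows essentially the same strategy as the paper's: reduce to $d=\lim d(f_n)>0$, track where the functions diverge, and split into the case where the common part exhausts $]0,d[$ (glue the restrictions) versus the case where it stabilizes at some $T<d$ (the tail is a single constancy interval whose exit points form a Cauchy sequence in a complete piece $M_\alpha$). The only difference is bookkeeping — you pass to a rapidly Cauchy subsequence and use $\liminf\di(f_n,f_{n+1})$, while the paper truncates each $f_n$ at $T_n=\inf_{m\ge n}T_{nm}$ — but the dichotomy and the appeal to completeness of the pieces are the same.
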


\begin{proof}
Let $(f_n)$ be a Cauchy sequence in $\Pi\pp$, let $A_n$ be
the domain of the function $f_n$. Let $T_{nm}=\di(f_n,f_m)$ if
$(f_n,f_m)$ is of type 2 and $T_{nm}=a_{f_n}(f_m)$ if $(f_n,f_m)$
is of type 1. Let $T_n=\inf_{m\geq n} T_{nm}$, thus $(T_n)$ is non-decreasing.

The first inequality in Lemma \ref{D}, (1), implies that
$(d(f_n))$ is a Cauchy sequence in $\R$ therefore a convergent
sequence with some limit $d$. If $d=0$ then
$\lm_{n\to\infty}D(f_n, f_{\emptyset})=\lm_{n\to \infty} d(f_n)=
0$ hence $\lm_{n\to\infty}f_n = f_{\emptyset}$. So let us suppose
that $d>0$.

We denote $\varepsilon_n=\sup_{k,m\geq n} D(f_k,f_m)$. Since
$(f_n)$ is a Cauchy sequence $(\varepsilon_n)$ is a decreasing
sequence converging to $0$. By definition $D(f_n,f_m)\geq
d(f_n)-T_{nm}\ge d(f_n)-T_n$. We conclude that $d(f_n)-T_{n}\leq
\varepsilon_n $. It follows that $\lm_{n\to\infty}T_n = d$. It
also follows that the restrictions $g_n=f_n|_{]0,T_n[}$ also form
a Cauchy sequence and $\lm_{n\to\infty}D(g_n,f_n)=\lm_{n\to
\infty}T_n-d(f_n)=0$, so that it suffices to prove that the
sequence $(g_n)$ converges.

By definition we have that for every $m\geq n$ either
$g_n=f_m|_{]0,T_n[}$ or $(g_n,f_m)$ is of type 1 and
$T_n=a_{g_n}(f_m)$. If the latter case ever occurs then there
exists $s_n<T_n$ such that $g_n=\bar{g}_n \sqcup
\mathbf{C}_{(x_n,y_n)}$, where $\bar{g}_n=g_n|_{]0,s_n[}$, and for
every $m>n$ such that $(g_n,f_m)$ is of type 1,
$f_m|_{[0,\sigma_{f_m}(g_n)]}=\bar{g}_n \sqcup
\mathbf{C}_{(x_n,y_m)}$. If the latter case ever occurs then we put
$s_n=T_n$.

The statements in the previous paragraph remain true if we replace
$f_m$ with $g_m$. Consequently the sequence $(s_n)$ is increasing.
On the other hand $s_n\leq T_n$, hence $(s_n)$ is convergent.

If $\lm_{n\to\infty}s_n = d$ then  $D(\bar{g}_n,g_n)=T_n-s_n$
converges to $0$ so we can replace the functions $g_n$ with
$\bar{g}_n$. For $m\geq n$ we have that $\bar{g}_m$ is an
extension of $\bar{g}_n$. We consider $A=\cup_n \left[ A_n\cap
[0,s_n] \right]$ and $g$ defined on $A$ by
$g|_{[0,s_n]}=\bar{g}_n$. The function $g$ is the limit of the
sequence $\bar{g}_n$.

Suppose that $s_n$ converges to $s<d$. Then $T_n-s_n$ converges to
$\delta = d-s>0$. Let $n_0$ be such that for all $n\geq n_0$,
$\varepsilon_n < \frac{\delta}{2}$ and $T_n-s_n >
\frac{\delta}{2}$. In particular for all $k,m\geq n_0$,
$D(g_k,g_m)\leq D(f_k,f_m)< \frac{\delta}{2}$. Let $n\geq n_0$
fixed. For every $m\geq n$ the function $g_m$ extends $g_n$ which
by the previous hypotheses implies $g_m=g_n$ or $(g_m,g_n)$ is of
type 1 which, in turn, implies that $g_m=\bar{g}_n\sqcup
\mathbf{C}_{(x_n,y_m)}$. We then have that for $n\geq n_0$ the
sequence $s_n$ is constant equal to $s$, the sequence $x_n$ is
constant equal to $x$,  $A(f_n)\cap ]0,s[$ is constant equal to
$A$, and that $(y_n)$ is a Cauchy sequence in some $M_\alpha$ or
in $\ttt$. Since $\ttt$ and all $M_\alpha$ are complete, $y_n$
converges to a point $y$. We define $g=\bar{g}_n\sqcup
\mathbf{C}_{(x,y)}$ for every $n\geq n_0$. Since for $n\geq n_0$
we have $D(g_n,g)=d(y_n,y)$ we conclude that $g_n\to g$.
\end{proof}

\begin{lem} \label{homogeneous}
$\Pi\pp$ is homogeneous.
\end{lem}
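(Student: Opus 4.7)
The plan is to construct, for each $f\in\Pi\pp$, an isometry $\Phi_f$ of $\Pi\pp$ sending the base point $f_\emptyset$ to $f$; then $\Phi_g\circ\Phi_f^{-1}$ moves $f$ to $g$. I shall first handle the elementary case $f=\mathbf{C}_{(x_0,y_0)}$ with $(x_0,y_0)\in\Omega_\alpha$, and then assemble the general case by composing elementary isometries along the (at most countably many) maximal constant sub-intervals of $A_f$, passing to a limit via the completeness of $\Pi\pp$ (Lemma \ref{complete}) in the infinite case.

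For the elementary case, invoke homogeneity of $M_\alpha$ --- automatic for $M_0=\ttt$ and a standing hypothesis on the pieces of $\pp$ in the main application (Theorem \ref{main1}) --- to select an isometry $\phi\in\mathrm{Isom}(M_\alpha)$ with $\phi(x_0)=y_0$. The unique piece $P=\{f_\emptyset\}\cup\{\mathbf{C}_{(x_0,w)}:w\in M_\alpha\setminus\{x_0\}\}$ passing through both $f_\emptyset$ and $f$ is canonically isometric to $M_\alpha$ via $f_\emptyset\leftrightarrow x_0$ and $\mathbf{C}_{(x_0,w)}\leftrightarrow w$. I will define $\Phi_f$ to act as $\phi$ on $P$ through this identification, and to extend to branches hanging off each point $p\in P$ by the natural recursive transport: the branch of type $\beta$ attached at $p$ is sent to the branch of the same type attached at $\Phi_f(p)$, via the identity on $M_\beta$ when $\beta\ne\alpha$ and via $\phi$ on $M_\alpha$ when $\beta=\alpha$, with this rule iterated through the tree-graded structure. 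Concretely, for $h=\mathbf{C}_{(u,v)}\sqcup h'$ with $(u,v)\in\Omega_\beta$ and $(\beta,u)\ne(\alpha,x_0)$, the image begins with the piece-$P$ segment $\mathbf{C}_{(x_0,y_0)}$ traversing from $f_\emptyset$ to $f$, followed by $\mathbf{C}_{(\phi(u),\phi(v))}\sqcup h''$ if $\beta=\alpha$ or by $\mathbf{C}_{(u,v)}\sqcup h''$ if $\beta\ne\alpha$, where $h''$ is the inductively transported tail.

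Verifying that $\Phi_f$ is a well-defined isometry reduces to (i) checking conditions ($\Pi_1$)--($\Pi_5$) for the image --- this is straightforward since $\phi$ is a bijective isometry and ($\Pi_4$) is preserved by the injectivity of $\phi$ (for instance $\phi(u)=y_0$ iff $u=x_0$, which rules out the fake exit $\mathbf{C}_{(x_0,y_0)}\sqcup\mathbf{C}_{(\phi(u),\phi(v))}$ when $u\ne x_0$) --- and (ii) verifying $D(\Phi_f(h_1),\Phi_f(h_2))=D(h_1,h_2)$ via a case analysis on Cases 1 and 2 of Definition \ref{metric}, using that the divergence point $\di$ and the divergence data $(x_g(f),y_g(f))$ transform compatibly because $\phi$ preserves distances in $M_\alpha$. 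Bijectivity of $\Phi_f$ follows from the analogous construction with $\phi^{-1}$. The main obstacle is the careful bookkeeping in the recursive definition of $\Phi_f$, particularly the boundary case $\phi(w)=x_0$, where a piece-$P$ image segment collapses to $f_\emptyset$ and any resulting fake exit must be removed to preserve ($\Pi_4$); and, for a general non-elementary $f$, ensuring that the composition of the (at most countably many) elementary isometries converges in the topology induced by $D$ to a well-defined isometry on all of $\Pi\pp$, using Lemma \ref{complete} together with the fact that on any fixed $h$ the action of the partial composition stabilizes at bounded depth.
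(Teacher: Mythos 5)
Your approach has two genuine problems, one of substance and one of hypothesis.

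First, you invoke homogeneity of the pieces $M_\alpha$ to find $\phi\in\mathrm{Isom}(M_\alpha)$ with $\phi(x_0)=y_0$. But Lemma \ref{homogeneous} is a step in the proof of Theorem \ref{tpmain}, which asserts that $\Pi\pp$ is homogeneous for an \emph{arbitrary} collection of complete geodesic metric spaces $\pp$ --- no homogeneity of the pieces is assumed, and the statement is true without it. The reason is that the tree product attaches, at \emph{every} point $f$ and for \emph{every} $\alpha$ and every $x_\alpha\in M_\alpha$, a copy of $M_\alpha$ glued at $x_\alpha$; so one can move the base point without ever moving inside a piece. The paper exploits exactly this: it sets $\hat f=\overline{f\circ\sigma_{d(f)/2}}$ (the reversal of $f$) and defines $\Phi_f(g)$ by the single explicit formula $\hat{f}|_{]0,d(f)-a_g(f)[}\sqcup \mathbf{C}_{(y_g(f),y_f(g))}\sqcup g|^\iota_{]a_f(g),d(g)[}$ in Case 1 (and $\hat{f}|_{]0,d(f)-\di(f,g)[}\sqcup g|^\iota_{]\di(f,g),d(g)[}$ in Case 2), i.e., the function encoding the geodesic from $f$ to $g$ read from the new base point $f$. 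All piece-points appearing in the image are the original points $y_g(f)$, $y_f(g)$ of the original pieces; no isometry of any $M_\alpha$ is ever applied. Your proof, even if completed, establishes only the special case of homogeneous pieces (which does cover Theorem \ref{main1}, but not Theorem \ref{tpmain} as stated).

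Second, your passage from the elementary case to general $f$ does not work as described. The domain $A_f$ is merely a dense open subset of $]0,d(f)[$; its maximal intervals form a countable family, but their natural (left-to-right) order can be a dense linear order (e.g., the complementary intervals of a Cantor set), so there is no well-defined sequence of ``first, second, third'' elementary isometries to compose and no sequential limit to take. Relatedly, the claim that ``on any fixed $h$ the action of the partial composition stabilizes at bounded depth'' fails whenever $h$ shares a long initial segment with $f$: each successive elementary isometry genuinely moves the image of $h$, so you would need a Cauchy estimate on the images together with a separate argument that the limit map is a surjective isometry --- completeness of $\Pi\pp$ alone does not give you a limit \emph{isometry}. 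The paper's one-step formula avoids all of this; its surjectivity is checked by explicitly exhibiting $h=\Phi_f^{-1}(g)$, and the metric-preservation is a case analysis parallel to the proof of the triangle inequality in Proposition \ref{trin}. I would recommend abandoning the piecewise/limit construction and proving the lemma via the re-rooting formula.
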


\proof Let $f$ be an arbitrary function in $\Pi\pp$. We construct
an isometry $\Phi_f$ such that $\Phi_f(f)=f_{\emptyset }$.

We denote $\hat{f}=\overline{f\circ \sigma_{d(f)/2}}$, which is a
function in $\Pi\pp$ (see Remark \ref{backward}).

Let $g$ be an arbitrary function in $\Pi\pp\setminus \{f\}$. If
$(f,g)$ is of type 1 we write
$$\Phi_f(g)=\hat{f}|_{]0,d(f)-a_g(f)[}\sqcup
\mathbf{C}_{(y_g(f),y_f(g))}\sqcup
 g|^\iota_{]a_f(g),d(g)[}.$$

If $(f,g)$ is of type 2 we write $$\Phi_f(g)=
\hat{f}|_{]0,d(f)-\di(f,g)[}\sqcup g|^\iota_{]\di(f,g),d(g)[}.$$

It is easy to check that $\Phi_f(g)$ satisfies all the conditions
$\Pi_1-\Pi_5$, so $\Phi_f(g)\in\Pi\pp$.

The proof that $\Phi_f$ preserves the metric $D$ follows the same
path as the proof of the triangle inequality for $D$ (one has to consider cases as in the proof of Proposition \ref{trin}, and the treatment of each case is similar; the only significant difference is that one has to use Properties $\Pi_4$ and $\Pi_5$ here), so we leave
it as an exercise.

To conclude the proof, we show that $\Phi_f$ is surjective. Take
$g\in \Pi\pp$. First suppose that $(g, \hat{f})$ is of type 1.
Then let $$h=f|_{]0,a_g(\hat{f})[}\sqcup \mathbf{C}_{(y_g(\hat
f),y_{\hat f}(g))} \sqcup g|^\iota_{]d(g)-a_{\hat f}(g),d(g)[}.$$
It is easy to see that $h\in\Pi\pp$. Notice that $(f,h)$ is of
type 1, $\di(f,h)=d(f)-a_g(\hat f)$, $a_h(f)=d(f)-\di(\hat{f},g)$,
$a_f(h)=d(f)-a_g(\hat f)+\dist(y_g(\hat f),y_{\hat f}(g))$,
$y_h(f)=x_g(\hat f)=x_{\hat f}(g)$, $y_f(h)=y_{\hat f}(g)$,
$d(h)=a_f(h)+d(g)-a_g(\hat f)$. So

$$\begin{array}{ll}\Phi_f(h) & =\hat{f}|_{]0,d(f)-a_h(f)[}\sqcup
\mathbf{C}_{(y_h(f),y_f(h))}\sqcup
 h|^\iota_{]a_f(h),d(h)[}\\& =
 \hat{f}|_{]0,\di(\hat f,g)[}\sqcup \mathbf{C}_{(x_{\hat f}(g),y_{\hat f}(g))}
 \sqcup g|^\iota_{]d(g)-a_{\hat f}(g),d(g)[} = g.
\end{array}
$$

Now suppose that $(g,\hat{f})$ is of type 2. Let
$$h=f|_{]0,d(f)-\di(\hat f,g)[}\sqcup g|^\iota_{]\di(\hat f,
g),d(g)[}.$$

Then again it is easy to check that $h\in\Pi\pp$, and $(f,h)$ is
of type 2, $\di(f,h)=d(f)-\di(\hat f, g)$,
$d(h)=d(g)+d(f)-2\di(\hat f,g)$. So

$$\begin{array}{ll}\Phi_f(h) &=
\hat{f}|_{]0,d(f)-\di(f,h)[}\sqcup h|^\iota_{]\di(f,h),d(h)[}\\
&=g|_{]0,\di(\hat f,g)[}\sqcup g|^\iota_{]\di(\hat f,g),
d(g)[}=g.\end{array}$$
\endproof

Now we are going to describe all geodesics joining two points in
$\Pi\pp$. By homogeneity (Lemma \ref{homogeneous}) it suffices to
consider only geodesics connecting $f\in \Pi\pp$ with
$f_{\emptyset}$.

\begin{rem}\label{iv} The isometry $\Phi_f$ and the construction of the function
$h=\Phi_f\iv(g)$ will be used below. In particular, suppose that
for some $\alpha\in I$, $x_\alpha\in M_\alpha$, there exists $y_0
\in M_\alpha \setminus \{ x_\alpha \}$ and $f_0\in \Pi\pp$ such
that $f=f_0\sqcup \mathbf{C}_{(y_0,x_\alpha )}$. Then for every
$y\ne y_0\in M_\alpha$ the function $\Phi_f\iv(\mathbf{C}_{y_0,y})$ is
$f_0\sqcup \mathbf{C}_{(y_0,y)}$ and is $f_0$ for $y=y_0$.

If the $y_0$ does not exist, then
$\Phi_f\iv(\mathbf{C}_{(y_0,y)})$ is $f\sqcup
\mathbf{C}_{(x_\alpha ,y )}$ for every $y\neq x_\alpha\in
M_\alpha$ and is $f$ for $y=x_\alpha$.
\end{rem}

\begin{defn}[geodesics connecting points in $\Pi\pp$ with $f_\emptyset$]\label{geod} For every $f\in\Pi\pp$ consider the set of functions
${\mathfrak g}:[0,d(f)]\to \Pi\pp$ defined as follows. Let
${\mathfrak g}(0)=f_{\emptyset}$. If $t\in ]0,d(f)]\setminus A_f$
we put ${\mathfrak g}(t)=f|_{]0,t[}$. Let $]a,b[$ be a maximal
open interval contained in $A_f$. Then
$f|^\iota_{]a,b[}=\mathbf{C}_{(x,y)}$ for some $(x,y)\in \Omega$.
We have $x,y\in M$, where $M=M_\alpha$. Let $\mathfrak h$ be a
geodesic joining $x$ and $y$ in $M$. We write ${\mathfrak
g}(t)=f|_{]0,a[}\sqcup \mathbf{C}_{(x,{\mathfrak h}(t-a))}$ for
every $t\in ]a,b[$.
\end{defn}

\begin{lem} \label{geodesic} Each such function
$\mathfrak g$ is a geodesic in $\Pi\pp$ connecting $f_\emptyset$
and $f$.
\end{lem}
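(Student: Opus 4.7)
\medskip

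\noindent\textbf{Proof plan.} The proof splits into three claims: (a) $\mathfrak g(t) \in \Pi\pp$ for every $t \in [0, d(f)]$; (b) $d(\mathfrak g(t)) = t$, together with $\mathfrak g(0) = f_\emptyset$ and $\mathfrak g(d(f)) = f$; and (c) $D(\mathfrak g(s), \mathfrak g(t)) = t - s$ whenever $0 \le s \le t \le d(f)$.

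Claim (a) is verified by checking $(\Pi_1)$--$(\Pi_5)$ directly. For $t \notin A_f$, $\mathfrak g(t) = f|_{]0, t[}$ trivially inherits the five conditions from $f$. For $t$ in a maximal sub-interval $]a, b[$ of $A_f$, the only new feature of $\mathfrak g(t)$ is the last maximal interval $]a, t[$, on which $\mathfrak g(t)$ is the constant $(x, \mathfrak h(t - a))$. Then $(\Pi_1)$--$(\Pi_3)$ are immediate from $\mathfrak h$ being a geodesic; $(\Pi_4)$ is inherited from $f$ since the new last interval has the same first coordinate $x$ as the original constant on $]a, b[$; and $(\Pi_5)$ is verified by ruling out a putative symmetry interval $]p - q, p + q[$: such an interval either sits in $]0, a[$ (contradicting $(\Pi_5)$ for $f$), sits in $]a, t[$ (impossible because a constant with distinct entries is not self-reverse), or straddles $a$ (reducing to a symmetry of $f$ by means of $(\Pi_4)$). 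Claim (b) is immediate from the construction, using that $\mathfrak h$ is a geodesic for $d(\mathfrak g(t)) = t$ and that $d(f) \notin A_f$ by $(\Pi_1)$ for $\mathfrak g(d(f)) = f$.

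For claim (c), I would case-analyze based on how $s$ and $t$ sit relative to the maximal open sub-intervals of $A_f$. In the principal case both $s$ and $t$ lie in the same maximal interval $]a, b[$: $\mathfrak g(s)$ and $\mathfrak g(t)$ then share the prefix $f|_{]0, a[}$, and on $]a, s[$ and $]a, t[$ they are constants with common first coordinate $x$ and distinct second coordinates $\mathfrak h(s - a)$ and $\mathfrak h(t - a)$. Hence $\di(\mathfrak g(s), \mathfrak g(t)) = a$, the pair is of type 1, $a_{\mathfrak g(t)}(\mathfrak g(s)) = s$, $a_{\mathfrak g(s)}(\mathfrak g(t)) = t$, and Definition \ref{metric} gives $D = \dist(\mathfrak h(s - a), \mathfrak h(t - a)) = t - s$. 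In the remaining cases, let $a_s$ denote the left endpoint of the maximal interval of $A_f$ containing $s$ (or $s$ itself when $s \notin A_f$) and let $b_s$ denote the corresponding right endpoint (or $s$). Then $s \le b_s \le t$ and $\mathfrak g(t)$ agrees with $f$ throughout $]a_s, b_s[$. When $s \in A_f$ the pair is of type 1 with $\di = a_s$, yielding $D = 0 + (t - b_s) + (b_s - s) = t - s$; when $s \notin A_f$ the pair is of type 2 with $\di = s$, yielding $D = 0 + (t - s) = t - s$.

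The main obstacle is the bookkeeping in part (c): one must correctly compute $\di$, as well as the quantities $a_g(f)$, $a_f(g)$, $x_\bullet$ and $y_\bullet$ appearing in Definition \ref{metric}, in every subcase, and decide whether Case 1 or Case 2 applies, with additional attention to boundary situations (e.g.\ $s$ or $t$ coinciding with an endpoint of a maximal sub-interval of $A_f$, or with $0$ or $d(f)$). A secondary subtlety lies in verifying $(\Pi_5)$ in (a): one must exclude backtracking symmetries straddling the junction at $a$, which is done using $(\Pi_4)$ for $f$ together with the fact that $\dist(x, \mathfrak h(t - a)) = t - a > 0$ keeps $x$ distinct from $\mathfrak h(t - a)$.
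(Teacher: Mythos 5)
Your proposal is correct and follows essentially the same route as the paper's proof: the same case analysis according to whether $s$ and $t$ lie in $A_f$, and the same type‑1/type‑2 computations of $D$ via $\di$, $a_\bullet$, $y_\bullet$. The only differences are cosmetic --- you additionally verify $(\Pi_1)$--$(\Pi_5)$ for each $\mathfrak g(t)$ (which the paper treats as immediate from Definition \ref{geod}), and in the subcase where $s,t$ lie in distinct maximal intervals of $A_f$ you compute $D$ directly, whereas the paper inserts an intermediate point $r\notin A_f$ and squeezes using the triangle inequality together with Lemma \ref{D}(1).
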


\proof Let $t,s\in [0,d(f)]$. We need to show that
\begin{equation}\label{geo}
D({\mathfrak g}(t),{\mathfrak g}(s))=|t-s|.
\end{equation}

If neither $t$ nor $s$ is in $A_f$ then $D({\mathfrak
g}(t),{\mathfrak g}(s))=D(f|_{]0,t[},f|_{]0,s[})=|t-s|$ by the
definition of $D$.

Suppose that $t\in A_f$ and $s \not\in A_f$. Let $]a,b[\subset
A_f$ be the maximal open sub-interval of $A_f$ containing $t$ and
let $f|^\iota_{]a,b[}=\mathbf{C}_{(x,y)}$. We have ${\mathfrak
g}(t)=f|_{]0,a[}\sqcup \mathbf{C}_{(x,{\mathfrak h}(t-a))}$, where
$\mathfrak h$ is a geodesic joining $x$ and $y$. If $s<t$ then the
function ${\mathfrak g}(t)$ is an extension of the function
${\mathfrak g}(s)$ and the equality (\ref{geo}) is satisfied.

Suppose that $t<s$. Then $({\mathfrak g}(t),{\mathfrak g}(s))$ is
of type 1 and $D({\mathfrak g}(t),{\mathfrak
g}(s))=\dist({\mathfrak h}(t-a),y)+s-b=b-t+s-b=s-t$.

Finally suppose that $t,s\in A_f$. If $t$ and $s$ are both
contained in $]a,b[\subset A_f$ maximal open interval then the
equality (\ref{geo}) is obvious. If not then there exists a number
$r \not\in A_f$ between $t$ and $s$. The previous argument applied
to $]t,r[$ and to $]r ,s[$ implies $D({\mathfrak g}(t),{\mathfrak
g}(s)) \leq |t-s|$. The converse inequality follows from Lemma
\ref{D}(1).
\endproof

\begin{lem} \label{every} Every geodesic connecting $f_\emptyset$ to
$f$ is equal to a geodesic defined as in Definition \ref{geod}.
\end{lem}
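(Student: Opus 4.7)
The plan is to fix an arbitrary geodesic $\gamma : [0, d(f)]\to \Pi\pp$ from $f_\emptyset $ to $f$, set $g_t := \gamma (t)$, and show that each $g_t$ matches the prescription of Definition~\ref{geod}. The very first observation is that $(g_t,f_\emptyset )$ is always of type 2 with divergence $0$ (since $A_{f_\emptyset }=\emptyset $), so $D(g_t,f_\emptyset )=d(g_t)$, and the geodesic property gives $d(g_t)=t$. The argument then splits according to whether $t$ lies in $A_f$ or not.

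For $t\notin A_f$ (which covers $t=0$ and $t=d(f)$) the claim is that $g_t=f|_{]0,t[}$. Consider the pair $(g_t,f)$ with $D(g_t,f)=d(f)-t$, and let $\sigma =\di(g_t,f)$; by the lemma $\di(f,g)\notin A_f\cup A_g$ proved just after Definition~\ref{set} we have $\sigma \notin A_f$ and $\sigma \le t$. Suppose for contradiction $(g_t,f)$ is of type 1, with $(x,y_1),(x,y_2)$ the values of $g_t,f$ on the maximal subintervals of $A_{g_t},A_f$ starting at $\sigma $. From $a_f(g_t)\le d(g_t)=t$ we get $\dist(x,y_1)\le t-\sigma $; the hypothesis $t\notin A_f$ together with $\sigma \notin A_f$ forces the subinterval $]\sigma ,a_{g_t}(f)[\subset A_f$ to end at or before $t$, so $\dist(x,y_2)\le t-\sigma $. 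Substituting into the type 1 identity $D(g_t,f)=d(f)-t$ and simplifying yields $\dist(y_1,y_2)\le 0$, contradicting $y_1\ne y_2$. Hence the pair is of type 2, the formula $D=d(g_t)+d(f)-2\sigma =d(f)-t$ yields $\sigma =t$, and thus $g_t=f|_{]0,t[}$; property $(\Pi _3)$ survives the restriction precisely because $t\notin A_f$.

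Now let $]a,b[$ be a maximal subinterval of $A_f$ with $f|^\iota _{]a,b[}=\mathbf{C}_{(x,y)}$ and take $t\in \,]a,b[$; write $\phi :=f|_{]0,a[}$, so the previous step gives $g_a=\phi $ and $g_b=\phi \sqcup \mathbf{C}_{(x,y)}$. Running the same analysis on $(g_t,f)$: here $\sigma \notin A_f$ forces $\sigma \le a$, and the triangle-inequality argument of the previous paragraph still rules out $\sigma <a$, so $\sigma =a$. Then the maximal subinterval of $A_f$ starting at $a$ is $]a,b[$ with value $(x,y)$, and type 1 forces the maximal subinterval of $A_{g_t}$ starting at $a$ to carry a value $(x,z_t)$ with $z_t\in M_\alpha \setminus \{y\}$. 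Plugging into the type 1 identity and combining $\dist(x,z_t)\le t-a$ with the rearranged triangle inequality $\dist(z_t,y)\ge \dist(x,y)-\dist(x,z_t)$ forces $\dist(x,z_t)=t-a$ and $\dist(z_t,y)=b-t$. Together with $d(g_t)=t$ and $\di(g_t,f)=a$, this gives $g_t=\phi \sqcup \mathbf{C}_{(x,z_t)}$ with $z_t$ on a geodesic from $x$ to $y$ in $M_\alpha $.

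It remains to check that $t\mapsto z_t$ is itself a geodesic in $M_\alpha $. For $t,t'\in \,]a,b[$, Lemma~\ref{D}(2) reduces $D(g_t,g_{t'})$ to $D(\mathbf{C}_{(x,z_t)},\mathbf{C}_{(x,z_{t'})})$. If $z_t=z_{t'}$ and $t\ne t'$ the divergence of that pair would equal $\min (t,t')-a$, lying in the union of the two domains and violating the same lemma as above; hence $z_t\ne z_{t'}$, the pair is of type 1 with divergence $0$, and a direct computation yields $D=\dist(z_t,z_{t'})$. Since the geodesic property gives $|t-t'|=D(g_t,g_{t'})$, one concludes $\dist(z_t,z_{t'})=|t-t'|$; then $\mathfrak h(s):=z_{a+s}$ is a geodesic from $x$ to $y$ in $M_\alpha $ and $g_t=f|_{]0,a[}\sqcup \mathbf{C}_{(x,\mathfrak h(t-a))}$, exactly as in Definition~\ref{geod}. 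The principal obstacle throughout is the case analysis on the type and divergence point of $(g_t,f)$; in every subcase it reduces to the same triangle-inequality argument, which simultaneously pins down $\sigma $ and forces triangle equalities in the target piece $M_\alpha $.
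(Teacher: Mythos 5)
Your proof is correct and follows essentially the same route as the paper: both arguments hinge on analyzing the type and divergence point of the pair $(\mathfrak g(t),f)$, showing that type 2 occurs exactly when $t\notin A_f$ (forcing $\mathfrak g(t)=f|_{]0,t[}$) and type 1 exactly when $t$ lies in a maximal interval $]a,b[$ of $A_f$ (forcing $\mathfrak g(t)=f|_{]0,a[}\sqcup \mathbf{C}_{(x,z_t)}$ with $t\mapsto z_t$ an isometric embedding tracing a geodesic from $x$ to $y$). The paper organizes the dichotomy by the type of the pair and you organize it by whether $t\in A_f$, but the content and the triangle-inequality computations are the same.
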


\proof Let ${\mathfrak g}\colon\, ]0,d(f)[\to \Pi\pp$ be any
geodesic connecting $f_\emptyset$ and $f$. For every $t\in ]0,d(f)[$
let $f_t={\mathfrak g}(t)$. Since $D(f_{\emptyset },f_t)=t$ it
follows that the domain of definition $A_t$ of $f_t$ is an open
dense set in $]0,t[$.

Suppose that $(f,f_t)$ is of type 1. Then $D(f,f_t)=
d(f)-t+2(t-a_f{f_t})+\dist(x,y_f(f_t)+\dist(y_f(f_t),y_{t})-\dist(x,y_f)$.
Since $D(f,f_t)= d(f)-t$ it follows that $t=a_f(f_t)$ and that
$\dist(x,y_{t})+\dist(y_f,y_{t})=\dist(x,y_f)$. In particular it
follows that $$\di(f,f_t)<t=\di(f,f_t)+\dist(x,y_{t})<a_{f_t}(f)$$
and $]\di(f,f_t),a_{f_t}(f)[$ is an interval on which $f$ is
constant. Hence $t\in A_f$.

Suppose that $(f,f_t)$ is of type 2. Then $D(f,f_t)=
d(f)-t+2(t-\di(f_t,f))$ and as it is equal to $d(f)-t$ it follows
$t=\di(f_t,f)\not\in A_f$ and $f_t=f|_{]0,t[}$.

We conclude that $(f,f_t)$ is of type 1 if and only if $t\in A_f$.

Let $]a,b[$ be a maximal open interval contained in $A_f$. We have
$f|_{]0,b[}=f|_{]0,a[}\sqcup \mathbf{C}_{(x,y)}, x,y\in M$ and by
the previous argument for every $t\in ]a,b[$,
$f_t=f|_{]0,a[}\sqcup \mathbf{C}_{(x,y_t)}$. Since for every
$t,s\in ]a,b[$ we have $D(f_s,f_t)=\dist(y_s,y_{t})=|t-s|$ it
follows that $s\to y_s$ is an isometric embedding of $]a,b[$ in
$M$. As $D(f_a,f_t)=d(x,y_t)=t-a$ and $D(f_t,f_b)=d(y_t,y)=b-t$ it
follows that $\lm_{t\to a}y_t=x$ and that $\lm_{t\to b}y_t=y$. We
conclude that $s\to y_s$ is a geodesic from $x$ to $y$.
\endproof

In particular, Lemma \ref{every} implies that is every element of $\pp $ is uniquely geodesic, then so is $\Pi\pp$.

\begin{lem}\label{tgstructure}
$\Pi\pp$ is a tree-graded space where every piece is
isometric to one of the $M_\alpha\in\pp$ and every transversal tree $T_x$ is
isometric to $\ttt$.
\end{lem}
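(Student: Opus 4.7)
The plan is to explicitly describe the pieces of $\Pi\pp$, verify both tree-graded axioms using the classification of geodesics from Lemma \ref{every}, and then identify the transversal trees.

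\textbf{Pieces and $(T_1)$.} For each triple $(f_0,\alpha,x)$ with $f_0\in\Pi\pp$, $\alpha\in I\cup\{0\}$, and $x\in M_\alpha$ such that $f_0\sqcup\mathbf{C}_{(x,y)}\in\Pi\pp$ for some (hence every) $y\in M_\alpha\setminus\{x\}$, set
\[
P(f_0,\alpha,x)\ :=\ \{f_0\}\cup\bigl\{\, f_0\sqcup\mathbf{C}_{(x,y)}\ :\ y\in M_\alpha\setminus\{x\}\,\bigr\}.
\]
The pieces of $\Pi\pp$ will be the sets $P(f_0,\alpha,x)$ with $\alpha\in I$. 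A direct computation with Case 1 of Definition \ref{metric} (common first-entry point $x$, $a_{g_2}(g_1)=d(g_1)$, $a_{g_1}(g_2)=d(g_2)$) shows that $y\mapsto f_0\sqcup\mathbf{C}_{(x,y)}$, $x\mapsto f_0$, is a bijective isometry $M_\alpha\to P(f_0,\alpha,x)$. To verify $(T_1)$ I will show that two distinct common points $g_1,g_2$ of two pieces uniquely determine $(f_0,\alpha,x)$: the value $\di(g_1,g_2)$ recovers $d(f_0)$ and hence the common prefix $f_0$, while the type-1 condition for $(g_1,g_2)$ recovers $\alpha$ and $x$ as the common first-entry point past $f_0$. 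Case analysis on whether $g_i=f_0$ or $g_i=f_0\sqcup\mathbf{C}_{(x,y_i)}$ rules out the possibility of a second piece.

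\textbf{Verification of $(T_2)$.} For a simple geodesic triangle with vertices $f_1,f_2,f_3$, use Lemma \ref{homogeneous} to assume $f_1=f_\emptyset$. The sides $[f_\emptyset,f_2]$ and $[f_\emptyset,f_3]$ are of the form of Lemma \ref{every} and coincide on $[0,s]$ where $s=\di(f_2,f_3)$, so simplicity forces $s=0$. If $(f_2,f_3)$ is of type 2, then the geodesic $[f_2,f_3]$ (obtained via $\Phi_{f_2}$ together with Lemma \ref{every}) must pass through $f_\emptyset$, contradicting simplicity. Hence $(f_2,f_3)$ is of type 1 at $s=0$, so both $f_2$ and $f_3$ start with a constant value in a common piece $M_\alpha$ at a common entry point $x$. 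If either $f_2$ or $f_3$ extended beyond its first constant interval, the geodesic $[f_2,f_3]$ would be forced to revisit one of the first two sides. Thus $f_2,f_3\in P(f_\emptyset,\alpha,x)$, and a further application of $\Phi_{f_2}$ combined with Lemma \ref{every} shows that every geodesic $[f_2,f_3]$ stays inside this piece. So the entire triangle lies in one piece.

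\textbf{Transversal trees.} By Lemma \ref{every}, a geodesic from $f_\emptyset$ to $g$ traverses an isometric copy of a geodesic in $M_\alpha$ on each maximal constant interval of $g$ on which $g\in\Omega_\alpha$. Consequently $g\in T_{f_\emptyset}$ iff all such values of $g$ lie in $\Omega_0$; thus $T_{f_\emptyset}$ is canonically identified with $\Pi\{\ttt\}$, the tree product of the one-element collection consisting of the adjoined tree. Running the arguments of this section in this restricted setting (no pieces with $\alpha\in I$, all geodesic ingredients drawn from $\ttt$), I will show that $\Pi\{\ttt\}$ is a complete, uniquely geodesic, homogeneous $\R$-tree whose branching number at every point is continuum (distinct germs at $f_\emptyset$ correspond to pairs consisting of an entry point $x\in\ttt$ and a direction at $x$ in $\ttt$, of which there are continuum many). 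By the uniqueness theorem of Mayer--Nikiel--Oversteegen \cite{MNO}, $\Pi\{\ttt\}\cong\ttt$. By homogeneity, $T_x\cong\ttt$ for every $x\in\Pi\pp$.

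\textbf{Main obstacle.} The delicate step is the case analysis for $(T_2)$: one has to systematically exclude triangles spanning more than one piece by tracking the type (1 vs.~2) of each pair of sides and invoking the no-backtracking property $(\Pi_5)$ to rule out the self-intersecting configurations. A secondary technicality is the rigorous identification of $T_{f_\emptyset}$ with $\ttt$ itself, rather than merely with some abstract $\R$-tree of continuum branching.
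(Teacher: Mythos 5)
Your proposal is correct and follows essentially the same route as the paper's proof: explicit pieces of the form $\{f_0\}\cup\{f_0\sqcup\mathbf{C}_{(x,y)}\}$, $(T_1)$ by case analysis on how two common points can sit in a piece, $(T_2)$ via the classification of geodesics in Lemma \ref{every} together with the isometries $\Phi_f$, and identification of the transversal trees through the Mayer--Nikiel--Oversteegen uniqueness theorem (the paper merely organizes $(T_2)$ slightly differently, treating simple bigons first and then reducing triangles to bigons). The one point you should make explicit in the $(T_2)$ step is the exclusion of $\alpha=0$: when the common entry space is $\ttt$, uniqueness of geodesics in a tree shows that no nontrivial simple bigon or triangle can arise there, which is exactly how the paper disposes of that case.
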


\begin{proof} We start by defining the pieces containing the point
$f_\emptyset$. Other pieces will be defined using the homogeneity.

For every $\alpha \in I$ (note: $\alpha\ne 0$) and $x_\alpha \in
M_\alpha $, the piece
$$
M_\alpha(f_{\emptyset },x_\alpha )=\left\{ \mathbf{C}_{(x_\alpha
,y )}\mid y\in M_\alpha \setminus \{ x_\alpha \}\right\}\cup \{
f_{\emptyset} \}.
$$
Clearly the map $\Psi_\alpha(x_\alpha)\colon M_\alpha \to
M_\alpha(f_{\emptyset },x_\alpha )$ that takes $x_\alpha
\rightarrow f_{\emptyset }$ and $y\rightarrow
\mathbf{C}_{(x_\alpha ,y )}$ for every $y\in M_\alpha \setminus \{
x_\alpha \}$ is an isometry.

For every other point $f\in \Pi\pp$ the piece
$M_\alpha(f,x_\alpha)$ containing $f$ is defined as
$\Phi_f\iv(M_\alpha(f_\emptyset,x_\alpha))$ where $\Phi_f$ is the
isometry defined in the proof of Lemma \ref{homogeneous}.

Notice that for every $\alpha\in I$, every $x_\alpha\in M_\alpha$
and every $f\in \Pi\pp$ we associate a unique isometric copy
$M_\alpha(f,x_\alpha)$ of $M_\alpha$ in $\Pi\pp$ and an isometry
$\Phi_f\iv\Psi_\alpha(x_\alpha)$ from $M_\alpha$ to
$M_\alpha(f,x_\alpha)$ that takes $x_\alpha$ to $f$. In other
words, for every $f\in \Pi\pp$ and every $x_\alpha\in M_\alpha$
there exists a copy of $M_\alpha$ in $\Pi\pp$ attached to $f$ by
$x_\alpha$.

Now let us show that different pieces have at most one point in
common (property ($T_1$) of tree-graded spaces). Suppose
$M_\alpha(f_{\emptyset },x_\alpha )\cap M_\beta(f,x_\beta )$
contains two distinct functions $g,h$. By Remark \ref{iv} we may
suppose, up to changing $f$ and $x_\beta$, that $M_\beta(f,x_\beta
)$ consists of the functions $f\sqcup \mathbf{C}_{(x_\beta ,y )}$
for every $y\neq x_\beta $ and of the function $f$. Since $g,h\in
M_\alpha(f_{\emptyset },x_\alpha )$ at least one of the two is of
the form $\mathbf{C}_{(x_\alpha ,x)}, x\neq x_\alpha$. Suppose it
is $g$.

If $h=f_{\emptyset}$ then $h\in M_\beta(f,x_\beta )$ implies
$h=f$. On the other hand $g=\mathbf{C}_{(x_\alpha ,x)}\in
M_\beta(f,x_\beta )$ implies $\alpha=\beta$ and $x_\alpha=x_\beta$
and hence $M_\alpha(f_{\emptyset },x_\alpha )= M_\beta(f,x_\beta
)$.

Suppose $h=\mathbf{C}_{(x_\alpha ,y)}\, ,\, y\neq x_\alpha$. Since
$h\in M_\beta(f,x_\beta )$ either $h=f$ or $h=f\sqcup
\mathbf{C}_{(x_\beta ,z)}$. In the latter case it follows that
$f=f_{\emptyset}$, $\alpha=\beta$ and $x_\alpha=x_\beta$ and we
can conclude. Suppose that $h=f$. Then $g=\mathbf{C}_{(x_\alpha
,y)}\sqcup \mathbf{C}_{(x_\beta ,u)}$, contradiction.

Now let us show that every simple geodesic triangle in $\Pi\pp$ is
contained in one piece (Property ($T_2$)). First we prove that for
geodesic simple bigons. It suffices to consider a bigon with
vertices $f_{\emptyset}$ and $f\in \Pi\pp\setminus \{ f_{\emptyset}
\}$. Let ${\mathfrak g}_1$ and ${\mathfrak g}_2$ be the two
geodesics forming this bigon. By Lemma \ref{every} $A_f=\, ]0,d(f)[$
\,: otherwise the bigon would not be simple. Hence
$f=\mathbf{C}_{(x,y)}$ for some $(x,y)\in M\times M$.

Again Lemma \ref{every} implies that
$\g_1(t)=\mathbf{C}_{(x,\pgot(t))}$ and
$\g_2(t)=\mathbf{C}_{(x,\rrr(t))}$ such that $\pgot$ and $\rrr$
are geodesics joining $x$ and $y$ in $M$. If $M=\ttt$ then $\g_1 =
\g_2$, a contradiction. Therefore $M=M_\alpha$, $\alpha\ne 0$, and
$\g_1 \cup \g_2 \subset M_\alpha(f_{\emptyset },x )$ as required.

Now consider a geodesic simple triangle $\Delta = \g_1\cup
\g_2\cup \g_3$. By homogeneity we may suppose that the common
endpoint of the geodesics $\g_1$ and $\g_2$ is $f_{\emptyset}$.
Let $f_1$ and $f_2$ be their other respective endpoints.

If $s=\di(f_1,f_2)>0$ then $s\not \in A_{f_1}\cup A_{f_2}$ and
${\mathfrak g}_1(s)=f_1|_{]0,s[}=f_2|_{]0,s[}={\mathfrak g}_2(s)$,
contradiction. Therefore $s=0$.

Suppose that $(f_1,f_2)$ is of type 2. Then
$D(f_1,f_2)=d(f_1)+d(f_2)$ and $\g_1\cup \g_2$ is a geodesic
between $f_1$ and $f_2$. It remains to apply the result about
bigons.

Suppose that $(f_1,f_2)$ is of type 1. Then, as $s=0$, there exist
$a_i>0 $ and $(x,y_i)\in M \times M$ such that
$f_i|_{]0,\sigma_i[}=\mathbf{C}_{(x,y_i)}$ for $i=1,2$. We have
\begin{equation}\label{fd}
D(f_1,f_2)=d(f_1)-a_1+d({f_2})-a_2+d(y_1,y_2).
\end{equation}

According to Lemma \ref{every}, $a_i\not\in A_{f_i}$ implies
$\g_i(a_i)=f_i|_{]0,a_i[}$ for $i=1,2$. We denote by $\g_i'$ the
sub-arc of $\g_i$ with endpoints $\g_i(a_i)$ and $f_i$.

Consider a geodesic $\pgot$ between $y_1$ and $y_2$ in $M$ and let
$\g(t)=\mathbf{C}_{(x,\pgot(t))}$ be the corresponding geodesic
between $f_1|_{]0,\sigma_1[}$ and $f_2|_{]0,\sigma_2[}$. Formula
(\ref{fd}) implies that $\g_3'=\g_1'\cup \g \cup \g_2'$ is a
geodesic joining $f_1$ and $f_2$.

If $\g_3'\cup \g_3$ is a simple bigon then it is contained in a
piece, $M_\alpha (f,x_\alpha )$. In particular
$\mathbf{C}_{(x,y_i)}, i=1,2$, are contained in $M_\alpha
(f,x_\alpha)$.

By Remark \ref{iv} we may suppose that
$\Phi_f\iv(\mathbf{C}_{(y_0,y)})$ is $f\sqcup
\mathbf{C}_{(x_\alpha ,y )}$ for every $y\neq x_\alpha\in
M_\alpha$ and is $f$ for $y=x_\alpha$.

Since $y_1\neq y_2$ it follows that at least one of the two
functions $\mathbf{C}_{(x,y_i)}, i=1,2$, is of the form $f\sqcup
\mathbf{C}_{(x_\alpha ,z)}$. Consequently, $f=f_{\emptyset }$ and
$(x,y_i)=(x_\alpha ,z)\in \Omega_{\alpha }$. In particular
$M=M_\alpha $, so ${\mathfrak g} \subset
M_\alpha(f_{\emptyset},x)$. It follows that the whole bigon is
contained in this piece. We conclude that $\Delta$ is also
contained in this piece.

Suppose ${\mathfrak g}_3'\cap {\mathfrak g}_3 \neq \{ f_1,f_2 \}$.
Since $\Delta$ is simple this implies ${\mathfrak g}\cap
{\mathfrak g}_3 \neq \emptyset$. Let $g$ be the nearest point of
$f_1$ which ${\mathfrak g}_3 $ has in common with ${\mathfrak g}$.
If $g= \mathbf{C}_{(x,y_1)}$ the simplicity of $\Delta$ implies
$g=f_1$. If $g\neq \mathbf{C}_{(x,y_1)}$ then the sub-arcs of
${\mathfrak g}_3$ and ${\mathfrak g}_3'$ with endpoints $f_1$ and
$g$ form a simple bigon intersecting ${\mathfrak g}$ in a
non-trivial sub-arc. By the previous argument it follows that the
whole bigon is in the piece $M_\alpha (f_{\emptyset},x)$,
therefore $f_1=\mathbf{C}_{(x,y_1)}$.

A similar argument implies that $f_2=\mathbf{C}_{(x,y_2)}$. A
decomposition of ${\mathfrak g}_3'\cup {\mathfrak g}_3$ into
simple bigons and geodesics joining their endpoints allows us to
conclude that ${\mathfrak g}_3'\cup {\mathfrak g}_3\subset
M_\alpha (f_{\emptyset},x)$, therefore $\Delta \subset M_\alpha
(f_{\emptyset},x)$.

It remains to prove that for every $f\in \Pi\pp$ the tree $T_f$ is
isometric to $\ttt$. It suffices to consider $f=f_{\emptyset }$.
The set $T_{\emptyset}$ of topological arcs with one endpoint
$f_{\emptyset }$ intersecting every piece in at most one point is
the same as the set of points in $\Pi\pp$ which can be joined to
$f_{\emptyset}$ by such arcs. By Lemma \ref{every} this set is an
$\R$-tree which is a union of trees isometric to $\ttt$. By the
uniqueness result from \cite{MNO,EP}, we conclude that $T_\emptyset$
is isometric to $\ttt$.
\end{proof}


\section{Universal $\qq$-trees}
\label{5}

Let $\qq $ be a collection of geodesic metric spaces. Recall that
all metric spaces in this paper are supposed to have cardinality at
most continuum.


\begin{defn}[{\bf Canonical isometries}]
Let $\free$ be a $\mathcal Q$-tree (see Definition \ref{def1}). For every $Q\in \qq$, we fix a set $D(Q)$ of orbit representatives of the diagonal action of ${\rm Isom } (Q)$ on $Q\times Q$. We also fix a set of orbit representatives $E(Q)$ of the action of ${\rm Isom } (Q)$ on $Q$. For every piece $P$ of $\free$ and every pair of points $(x,y)\in P\times P$ (resp. a point $x\in P$), we choose $Q\in \qq$ and an isometry $\iota \colon P\to Q$ such that $(\iota (x), \iota (y))\in D(Q)$ (resp. $\iota(x)\in E(Q)$). In what follows this isometry is called the {\it canonical} isometry associated to the triple $(P,x,y)$ (pair $(P,x)$). If a pair $(P,x)$ is canonically isometric to $(Q,y)$, then we say that $P$ is a {$(Q,y)$-piece at $x$}.
\end{defn}

\begin{defn}[{\bf Types over $\mathcal Q$}]
Let $U\subset ]0, \infty[$ be a (possibly empty) disjoint union of
bounded intervals
\begin{equation}\label{u}
U=\bigsqcup\limits_{\alpha \in A} ]a_\alpha ,b_\alpha[,
\end{equation}
where $a_\alpha , b_\alpha > 0$. Let also $f$ be a map that assigns
to each interval $]a_\alpha , b_\alpha[$ a  pair $(p,q)\in D(Q)$ for
some $Q\in \mathcal Q$ with $\d (p,q)=b_\alpha -a_\alpha$. We call
such pairs $(U,f)$ {\it $\mathcal Q$-types}. $(U,f)$ is called {\it
trivial} if $U=\emptyset$.
\end{defn}

\begin{defn}[{\bf Equivalent types}]
We say that $(U_0,f_0)$ is an {\it initial subtype} of a $\mathcal
Q$-type $(U,f)$ if $U_0=U\cap ]0,r[$ for some $r\notin U$, and
$f_0\equiv f\vert _{U_0}$. Two $\mathcal Q$-types are {\it
equivalent} if they have equal nontrivial initial subtypes.
\end{defn}

\begin{defn}[{\bf Limit connected components and their types}]
Let $s$ be a point in a \lqt $\free$. Let $C$ be a connected
component of $\free\setminus \{s\} $, $\gamma \colon [0, r]\to
C\cup\{ s\}$ be any geodesic parameterized by length such that
$\gamma (0)=s$. If there exists a piece $P$ of $\free$ such that
$P\cap \gamma$ is a nontrivial initial subsegment of $\gamma$, we
say that $C$ is a {\it non-limit} component. Otherwise we say that
$C$ is a {\it limit component} and define its {\it type} as follows.
Let $U$ be as in (\ref{u}), where for every $\alpha \in A$, $]\gamma
(a_\alpha), \gamma (b_\alpha )[=P_\alpha \cap \gamma $ for some
piece $P_\alpha$ of $\free$ and any piece $P\notin \{ P_\alpha \mid
\alpha \in A\} $ intersects $\gamma $ in at most $1$ point. Note
that $U\ne ]0,r[$ since in (\ref{u}), $a_\alpha>0$ by our
assumption. The map $f$ is defined by the rule $f]a_\alpha ,
b_\alpha[= (\iota (\gamma (a_\alpha)), \iota (\gamma (b_\alpha)))$,
where $\iota $ is the canonical isometry associated to the triple
$(P_\alpha, \gamma (a_\alpha), \gamma (b_\alpha ))$. We say that
$(U,f)$ is the {\it type of $\gamma $} and  define the {\it type of
$C$}, denoted $\tau (C)$, to be the equivalence class of $(U,f)$.
\end{defn}

It turns out that the notions of a limit and non-limit component and types of limit ones are well-defined, i.e., they are independent of the choice of a particular geodesic $\gamma $. The following well-known fact will be used several times: any continuous image of a closed interval in a Hausdorff space is arc-connected (i.e., connected by injective paths).

\begin{lem}\label{wd}
Let $s$ be a point in a \lqt $\free$ and let $C$ be a connected component of $\free\setminus \{s\} $. Then at least one of the following holds.
\begin{enumerate}
\item[(a)] There exists a unique piece $P$ of $\free$ such that for every $x\in C$, and every geodesic $[s,x]$ in $C\cup\{ s\}$, the intersection $P\cap [s,x]$ is a nontrivial initial subsegment of $[s,x]$.
\item[(b)] For any $x,y\in C$, any two geodesics $[s,x], [s,y]$ in $C\cup\{ s\}$ have at least one common point $u\ne s$. In particular, $[s,x]$ and $[s,y]$ have equivalent types.
\end{enumerate}
\end{lem}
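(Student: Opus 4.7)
The plan is to fix a basepoint $x_0\in C$ together with a geodesic $\gamma_0=[s,x_0]\subseteq C\cup\{s\}$ and run a dichotomy on whether $\gamma_0$ begins in a piece. Set $b_0=\sup\{t\ge 0 : \gamma_0([0,t])\text{ lies in a single piece}\}$; if $b_0>0$, Lemma~\ref{DS}(a) together with closedness of pieces supplies a piece $P$, unique by $(T_1)$, with $\gamma_0([0,b_0])\subseteq P$. I will show $b_0>0$ forces conclusion (a) with this $P$, while $b_0=0$ forces conclusion (b); the uniqueness in (a) then follows automatically from $(T_1)$ applied to two competing candidates.

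Both cases will use the same arc-extraction construction. Given any $y\in C$ and geodesic $\gamma_y=[s,y]\subseteq C\cup\{s\}$, use that $C$ is a path component of $\free\setminus\{s\}$ to pick a simple arc $\alpha\subseteq C$ from $x_0$ to $y$, let $v=\alpha(s_v)=\gamma_0(t_v)$ be the last intersection point of $\alpha$ with $\gamma_0$ along $\alpha$ (so $s_v$ is maximal in $\alpha\cap\gamma_0$), and form the simple arc $\mu=\gamma_0|_{[0,t_v]}\cup\alpha|_{[s_v,1]}$ from $s$ to $y$. Since $v\in\alpha\subseteq C$, we have $t_v>0$. In Case~A, pick $t\in(0,\min(t_v,b_0))$: the maximal sub-arc of $\mu$ containing $\mu(t)=\gamma_0(t)\in P$ in a single piece has the form $\mu([0,b])$ with $b\ge\min(t_v,b_0)>0$, and its containing piece must be $P$ by $(T_1)$. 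Lemma~\ref{DS}(c) then forces every arc from $s$ to $y$—in particular $\gamma_y$—to contain $\mu(b)\in P\setminus\{s\}$, whereupon Lemma~\ref{DS}(a) applied to $\gamma_y\cap P$ yields a nontrivial initial subsegment of $\gamma_y$ inside $P$.

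For Case~B ($b_0=0$), running the Case~A argument in reverse first upgrades to the symmetric statement that no geodesic $\gamma_y\subseteq C\cup\{s\}$ from $s$ can begin in any piece (else it would force $\gamma_0$ to begin in that piece too). Now for arbitrary $x,y\in C$ and geodesics $\gamma_x,\gamma_y\subseteq C\cup\{s\}$, repeat the above construction with $x_0,\gamma_0$ replaced by $x,\gamma_x$ to obtain a simple arc $\mu$ from $s$ to $y$ that starts along $\gamma_x$. For small $t\in(0,t_v)$, consider the maximal sub-arc of $\mu$ containing $\mu(t)=\gamma_x(t)$ in a single piece. If $\mu(t)$ lies in some piece, this sub-arc must be $\mu([t-a,t+b])$ with $t-a>0$ (otherwise $\gamma_x$ would start in that piece, contradicting the upgrade), and Lemma~\ref{DS}(c) forces $\gamma_y$ to contain $\mu(t-a)=\gamma_x(t-a)\ne s$; if $\mu(t)$ lies in no piece, decomposing the bigon $\mu\cup\gamma_y$ at its self-intersections into simple sub-loops and applying $(T_2)$ shows that a sub-loop avoiding $\mu(t)$ on the $\gamma_y$-side would trap $\mu(t)$ in a piece, a contradiction, so $\gamma_y$ must contain $\mu(t)\ne s$. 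Either way $\gamma_x\cap\gamma_y$ has a point distinct from $s$, giving (b); equivalence of types is then immediate since both types agree on the shared prefix.

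I expect the main obstacle to be the ``no-piece'' forcing used in Case~B: Lemma~\ref{DS}(c) as stated applies only to points lying in some piece, so I need the auxiliary observation that a point of a simple arc not contained in any piece must appear on every topological arc with the same endpoints; this is established by decomposing the union of two such arcs into simple bigons whose containing pieces are provided by $(T_2)$. The delicate bookkeeping is to ensure that the forcing point produced by either Lemma~\ref{DS}(c) or this auxiliary observation is genuinely distinct from $s$—rather than $s$ itself—which is exactly where the Case-B upgrade ``no $\gamma_y$ starts in any piece'' does its work.
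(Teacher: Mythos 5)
Your proposal is correct and follows essentially the same route as the paper's proof: both hinge on splicing together an initial segment of $[s,x]$ with an arc in $C$ to $y$ to produce a simple arc from $s$ to $y$, then invoking Lemma~\ref{DS} to transfer the ``begins in a piece'' property (giving (a)) or to force a common point near $s$ (giving (b)). Your version merely reorganizes the dichotomy around a fixed base geodesic with an explicit ``upgrade'' step, and is somewhat more careful than the paper about the degenerate case of Lemma~\ref{DS}(c) and the uniqueness of $P$.
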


\begin{proof}
Let us take any $x,y\in C$ and consider any geodesics $[s,x], [s,y]$ in $C\cup\{ s\}$. Let us also fix any geodesics $[x,y]\subseteq C$. Since both $[s,x]$ and $[x,y]$ are compact, there exists $u\in [s,x]$, $u\ne s$, such that the subsegment $[s,u]$ of $[s,x]$ intersects $[x,y]$ trivially. Since the concatenation $[s,x][x,y]$ is arc-connected, there exists an arc $[u,y]\subseteq [s,x][x,y]$. Clearly the concatenation $c=[s,u][u,y]$ is also an arc. Now there are two cases to consider.

First assume that there is a piece $P$ which contains a nontrivial initial subsegment of $[z,y]$. By Lemma \ref{DS}, $P$ also contains a nontrivial initial subsegment of $c$.  Since $c$ contains a nontrivial initial subsegment of $[s, x]$, $P\cap [s,x]$ is a nontrivial initial subsegment of $[s,x]$. Since $[s,x]$ was arbitrary, we obtain (a).

Now suppose that such a piece $P$ does not exist. Then again by Lemma \ref{DS} $]s,x]$ and $c$ have common points arbitrary close to $s$. Then $]s,x]$ has a common point with $]s,y]$ and we get (b). The statement ``in particular" follows immediately from Lemma \ref{DS}.
\end{proof}

\begin{defn}[{\bf Universal $\mathcal Q$-trees}]\label{defu}
Let $\T $ denote the set of all possible equivalence classes of $\mathcal Q$-types.
A  {\it $\mathcal Q$-tree} $\free$ is called {\it universal} if the following conditions hold:
\begin{enumerate}
\item[(a)] $\free$ is complete;

\item[(b)] for any $s\in \free$, any $\theta \in \Theta$ there are exactly continuum limit connected components of $\free\setminus \{ s\} $ of type $\theta $;

\item[(c)] for any $Q\in \mathcal Q$, and any $x\in E(Q)$  and exactly continuum $(Q,x)$-pieces of $\free$ containing $s$.
\end{enumerate}
\end{defn}

{\it Remark. } The above definition can be generalized by replacing
continuum with any other cardinal number (or even two different
cardinals in (b) and (c)) as long as these cardinals are larger
than the cardinality of the set $\mathcal Q$. We do not develop the
general theory since the particular case is sufficient for our
applications.

\begin{defn}[{\bf Good $\mathcal Q$-subtrees}]
Let $\free$ be a $\mathcal Q$-tree, $\free^\prime$ a subset of $\free$. We say that $\free^\prime$ is a {\it $\mathcal Q$-subtree} of $\free$ if $\free^\prime$ is path-connected and every piece $P$ of $\free$ that intersects $\free^\prime$ in at least $2$ points, is contained in $\free'$. Further we say that $\free^\prime$ is {\it good} if for every point $t\in \free^\prime $ one of the following conditions holds:
\begin{enumerate}
\item[(a)] $\free^\prime $ intersects nontrivially every connected component of $\free\setminus \{ t\}$. In this case $t$ is called a  {\it point of type I}.
\item[(b)] $\free^\prime $ intersects nontrivially at most $2$ limit connected components of $\free\setminus \{ t\}$ and at most $2$ pieces of $\free$ containing $t$. In this case $t$ is called a  {\it point of type II}.
\end{enumerate}
\end{defn}

\begin{lem}\label{int}
Let $\free^\prime$ be a \lqst of a \lqt $\free$. Then the following hold.
\begin{enumerate}
\item[(a)] $\free^\prime $ is totally geodesic in $\free$, i.e., for any two points $x,y\in \free^\prime $,  $\free^\prime$ contains all geodesics connecting $x$ to $y$ in $\free$. In particular, $\free^\prime $ is geodesic.
\item[(b)] For any $x\in \free$ and any $y\in \free^\prime$, the intersection of any geodesic $[y,x]$ and $\free^\prime$ is either a subsegment $[y,z]$ or a half-open subinterval $[y,z)$ of $[y,x]$.
\end{enumerate}
\end{lem}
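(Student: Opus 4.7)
The plan is to reduce both parts to Lemma \ref{DS}(c), which describes how arbitrary arcs with given endpoints in a tree-graded space are constrained by any one geodesic between those endpoints.

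For part (a), fix $x,y\in \free'$ and let $c\colon [0,d]\to \free$ be any geodesic from $y$ to $x$ in $\free$. Since $\free'$ is path-connected and any continuous image of a compact interval in a Hausdorff space is arc-connected (the fact quoted just before Lemma \ref{wd}), there is an actual arc $\alpha\subseteq \free'$ from $y$ to $x$. For every $t\in [0,d]$, let $c\bigl([t-a,t+b]\bigr)$ be the maximal sub-arc of $c$ through $c(t)$ which lies in a single piece $P$, with $a,b\ge 0$. Lemma \ref{DS}(c) forces $\alpha$ to pass through $c(t-a)$ and $c(t+b)$. I will split into two cases: if $a+b>0$ then $c(t-a)\ne c(t+b)$ are two distinct points of $P\cap\free'$, so by the defining property of a $\mathcal{Q}$-subtree the whole piece $P$ is contained in $\free'$, whence $c(t)\in P\subseteq \free'$; if $a=b=0$ then Lemma \ref{DS}(c) says the point $c(t)$ itself lies on every arc from $y$ to $x$, in particular on $\alpha\subseteq \free'$. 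Either way $c(t)\in \free'$, so the entire geodesic lies in $\free'$, proving total geodesicity. Geodesicity of $\free'$ (as a metric subspace) is then immediate, since any geodesic of $\free$ joining two points of $\free'$ is contained in $\free'$ and has the correct length.

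For part (b), parameterize a geodesic as $c\colon [0,d]\to \free$ with $c(0)=y$, $c(d)=x$, and set
\[
Z=\{t\in[0,d]\mid c(t)\in \free'\}.
\]
Clearly $0\in Z$. I claim $Z$ is an interval: if $s<t$ both lie in $Z$, then $c|_{[s,t]}$ is a sub-geodesic of $c$, hence itself a geodesic in $\free$ connecting the two points $c(s),c(t)\in \free'$, and part (a) applied to this sub-geodesic gives $c\bigl([s,t]\bigr)\subseteq \free'$, i.e.\ $[s,t]\subseteq Z$. Since $Z$ is a subinterval of $[0,d]$ containing $0$, it is of the form $[0,z]$ or $[0,z)$ for some $z\in[0,d]$, which translates to $[y,c(z)]$ or $[y,c(z))$ as claimed.

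The only place where some care is needed is in part (a), namely upgrading the path in $\free'$ supplied by path-connectedness to a genuine arc (so that Lemma \ref{DS}(c) can be invoked on it) and cleanly separating the ``piece'' case $a+b>0$ from the ``transversal'' case $a=b=0$; once this dichotomy is in place the rest is essentially bookkeeping.
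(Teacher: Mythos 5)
Your proof is correct and follows essentially the same route as the paper: both use path-connectedness of $\free'$ to produce an arc between $x$ and $y$ inside $\free'$, invoke Lemma \ref{DS}(c) to conclude that any geodesic can deviate from that arc only inside pieces met in more than one point, and then absorb those pieces into $\free'$ via the definition of a $\mathcal Q$-subtree; part (b) is likewise deduced from (a). Your version merely spells out the dichotomy $a+b>0$ versus $a=b=0$ and the interval argument for $Z$, which the paper leaves implicit.
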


\begin{proof}
(a) Since $\free^\prime $ is path-connected, it contains an arc $\gamma ^\prime $ connecting $x$ to $y$. By Lemma \ref{DS}, every geodesic $\gamma $ connecting $x$ and $y$ in $\free$ may differ from $\gamma^\prime $ only inside pieces, which $\gamma ^\prime$ intersects in more than one point. Since all such pieces are contained in $\free^\prime $ by the definition of a \lqst \ $\gamma $ is also contained in $\free^\prime $.

(b) This immediately follows from (a).
\end{proof}

\begin{lem}\label{comp}
Let $\free^\prime $ be a good \lqst of a \lqt $\free$ and let $t$ belongs to $\overline{\free^\prime} $, the closure of $\free^\prime $ in $\free$. Then $\free^\prime \cup\{ t\}$ is also a good \lqst of $\free$.
\end{lem}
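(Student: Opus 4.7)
The plan is to reduce to the case $t\notin\free'$ (otherwise $\free'\cup\{t\}=\free'$) and then establish the following key observation: no piece $P$ of $\free$ can contain both $t$ and a point of $\free'$. To prove this, suppose $x\in P\cap\free'$ with $x\neq t$ and choose $t_n\in\free'$ with $t_n\to t$. The geodesic $[x,t_n]$ lies in $\free'$ by Lemma \ref{int}(a), and by Lemma \ref{DS}(a) intersects $P$ in an initial sub-arc $[x,y_n]$ with $y_n\in P\cap\free'$. In the easy case, either $y_n\neq x$ for some $n$ or some $t_n$ lies in $P\setminus\{x\}$; in either situation, $P\cap\free'$ contains two distinct points, forcing $P\subseteq\free'$ and hence $t\in\free'$, a contradiction. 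The delicate remaining case is $y_n=x$ and $t_n\notin P$ for all large $n$; here the Triangle Lemma (Lemma \ref{ttg}) applied to $x,t,t_n$ is the right tool. Since $[x,t]\subseteq P$ while $[x,t_n]\cap P=\{x\}$, the branch point from $x$ must satisfy $A=x$, and a short case split on whether the middle piece equals $P$ (using property $(T_1)$) shows that the geodesic $[t,t_n]$ passes through $x$. This yields $d(t,t_n)\geq d(t,x)>0$, contradicting $d(t,t_n)\to 0$.

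With the key observation in hand, the subtree conditions follow. For the piece condition, $|P\cap(\free'\cup\{t\})|\geq 2$ implies either $|P\cap\free'|\geq 2$ (so $P\subseteq\free'\subseteq\free'\cup\{t\}$) or $t\in P$ with $P\cap\free'\neq\emptyset$---the latter excluded by the key observation. For path-connectedness, choosing $t_n\in\free'$ with $d(t_n,t)<2^{-n}$ gives geodesics $[t_n,t_{n+1}]\subseteq\free'$ (Lemma \ref{int}(a)) of summable lengths; their concatenation extends continuously to $t$, producing a path from $t_1$ to $t$ inside $\free'\cup\{t\}$, and prepending any path in $\free'$ from a chosen $y_0\in\free'$ to $t_1$ completes the connection.

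Finally, goodness will be verified point by point. At $t$: connectedness of $\free'$ together with $t\notin\free'$ places $\free'$ inside a single component of $\free\setminus\{t\}$, so $\free'\cup\{t\}$ meets at most one limit component of $\free\setminus\{t\}$, and by the key observation zero pieces at $t$ nontrivially; thus $t$ has type II. At any $t'\in\free'$, the type carries over from $\free'$ because (i) the component of $\free\setminus\{t'\}$ containing $t$ also contains $t_n$ for $n$ large (since $d(t_n,t')\to d(t,t')>0$), so it was already counted, and (ii) no piece at $t'$ can contain $t$, since such a piece would meet $\free'$ at $t'$ and the key observation would then force $t\in\free'$. The only substantive step is the key observation; its triangle-lemma sub-case is the main obstacle.
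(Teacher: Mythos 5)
Your proof is correct and follows the same overall skeleton as the paper's (path-connectedness of $\free'\cup\{t\}$, the piece condition, then goodness point by point), but the central technical step is carried out by a different mechanism. The paper disposes of the piece condition in one line: given two points of $P\cap(\free'\cup\{t\})$, it joins them by an arc inside $\free'\cup\{t\}$ (path-connectedness having been established first), invokes Lemma \ref{DS} to conclude that the arc lies in $P$, so $P\cap(\free'\cup\{t\})$ is infinite and hence $P\cap\free'$ already contains two points; your ``key observation'' then falls out as a corollary. You instead prove the key observation directly, via Lemma \ref{ttg} and a metric contradiction: if $P\cap\free'=\{x\}$ and $t\in P$, every geodesic $[t,t_n]$ is forced through $x$, so $d(t,t_n)\ge d(t,x)>0$, contradicting $t_n\to t$. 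Both routes ultimately rest on the arc-convexity of pieces; yours is more hands-on but does not presuppose path-connectedness of $\free'\cup\{t\}$, while the paper's is shorter. Your verification of goodness at a point $t'\in\free'$ (the component of $\free\setminus\{t'\}$ containing $t$ already contains $t_n$ for large $n$, since a geodesic $[t,t_n]$ of length less than $d(t,t')$ cannot pass through $t'$; and no piece at $t'$ contains $t$ by the key observation) likewise differs in detail from the paper's (which takes $r\in C\cap(\free'\cup\{t\})$ and notes that the half-open geodesic segment $]t',r]$ lies in $C$ and must meet $\free'$), but both arguments are sound.
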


\begin{proof}
Let $\freg=\free'\cup\{ t\}$. Recall that $\free$ is geodesic by Lemma \ref{int}. It is straightforward to verify that for any geodesic subspace $M$ of a metric space $N$ and any point $m\in \overline{M}$, the space $M\cup \{m\} $ is path-connected. Hence $\freg$ is path-connected. Further suppose that there is a piece $P$ of $\free$ such that $P\cap \freg$ contains at least two points, say $x$ and $y$. Let $c$ be an arc connecting $x$ to $y$ in $\freg$. By Lemma \ref{DS} $c$ is contained in $P$, thus $P\cap \freg$ is infinite. Thus $P\cap \free^\prime $ contains at least 2 points and hence $P\subseteq \free^\prime \subset \freg$ by the definition of a $\mathcal Q$-subtree. Thus $\freg$ is a \lqst of $\free$.

It remains to show that $\freg$ is good. Let $s\in \freg$. If $s\ne t$ and $s$ was of type I in $\free^\prime$, it remains of type I in $\freg$. Further suppose that $s\ne t$ was of type II in $\free^\prime $. Let $C$ be a limit connected component of $\free\setminus \{ s\} $, or a piece of $\free$ containing $s$. Suppose that $\freg$ intersects $C$ nontrivially. Let $r\in C\cap \freg$ and let $[s,r]$ be a geodesic in $\free$. Then $]s,r]\in C\cap \freg$ and hence $\free^\prime $ intersects $C$ nontrivially. Thus $s$ is of type II in $\freg$. Finally we note that $t$ is clearly of type II in $\freg$ since $\freg\setminus \{ t\} =\free^{\prime }$ is connected.
\end{proof}

The main result of this section is the following. We say that a {\it branching degree} of a $\qq $-tree $\free $ is at most continuum if for any $s\in \free$, any $\theta \in \Theta$, any $Q\in \mathcal Q$, and any $x\in E(Q)$, there are at most continuum limit connected components of $\free\setminus \{ s\} $ of type $\theta $ and at most continuum $(Q,x)$-pieces of $\free$ containing $s$.

\begin{thm}\label{univ} Let $\mathcal Q$ be a collection of homogeneous complete geodesic metric spaces, each of cardinality at most continuum. Then the following hold.
\begin{enumerate}
\item There exists a universal $\qq $-tree, namely the tree product of the collection consisting of continuum isometric copies  of every $Q\in \qq$.

\item Every $\qq$-tree of branching degree at most continuum isometrically embeds into a universal $\qq$-tree.

\item Every two universal $\mathcal Q$-trees are isometric and the isometry preserves pieces.
\end{enumerate}
\end{thm}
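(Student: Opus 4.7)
For part (1), take $\pp$ to be the disjoint union of continuum isometric copies of each $Q \in \qq$. By Theorem \ref{tpmain}, $\Pi\pp$ is a complete, homogeneous, tree-graded metric space whose pieces are isometric to elements of $\pp$, hence to elements of $\qq$; so $\Pi\pp$ is a $\qq$-tree and Definition \ref{defu}(a) holds via Lemma \ref{complete}. For (c), homogeneity reduces the check to $s=f_\emptyset$: the pieces of $\Pi\pp$ through $f_\emptyset$ are the sets $M_\alpha(f_\emptyset,x_\alpha)$ from Lemma \ref{tgstructure}, and for each $Q \in \qq$ and each $x \in E(Q)$ the continuum copies of $Q$ in $\pp$ each supply at least one $(Q,x)$-piece at $f_\emptyset$ (via the canonical isometry), giving continuum many. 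For (b), given a type $\theta=[(U_0,f_0)]$, I plan to construct continuum distinct germs at $0$ of functions $f \in \Pi\pp$ realizing $(U_0,f_0)$ on an initial interval: having traversed the prescribed breakpoints, one can enter any of the continuum copies of the next piece in $\pp$, and Lemmas \ref{wd} and \ref{every} then force distinct choices to yield pairwise distinct limit components of type $\theta$ at $f_\emptyset$.

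For part (2), let $\free$ be a $\qq$-tree of branching degree at most continuum and $\free^*$ a universal $\qq$-tree. I would apply Zorn's Lemma to the poset of pairs $(\free_0,\phi_0)$, where $\free_0$ is a good $\qq$-subtree of $\free$ and $\phi_0:\free_0 \to \free^*$ is a piece-preserving isometric embedding, ordered by extension. If a maximal pair has $\free_0 \neq \free$, one enlarges $\free_0$ to its closure via Lemma \ref{comp}, picks $t \in \free \setminus \free_0$, and considers a geodesic $[s,t]$ from $t$ to a nearest point $s \in \free_0$, which meets $\free_0$ only at $s$. Near $s$ the geodesic either enters a piece $P$ of $\free$ or enters a limit component of $\free \setminus \{s\}$ of some type $\theta$. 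In either case, since $\free$ has branching degree at most continuum while $\free^*$ has continuum pieces or limit components of each isometry class or type at $\phi_0(s)$, an unused one exists in $\free^*$; extending $\phi_0$ onto $P$ by a canonical isometry, or onto an initial portion of the new limit component, strictly enlarges $\free_0$ and contradicts maximality.

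For part (3), given two universal $\qq$-trees $\free_1, \free_2$, I would run a transfinite back-and-forth of length $\kappa \le 2^{\aleph_0}$. Well-order each of $\free_1, \free_2$ by $\kappa$ and inductively build a chain of piece-preserving isometric isomorphisms $\phi_\alpha : \free_1^{(\alpha)} \to \free_2^{(\alpha)}$ between closed good $\qq$-subtrees; at stage $\alpha+1$ force the $\alpha$-th element of $\free_1$ (resp. $\free_2$) into the domain (resp. range) using the extension mechanism of part (2) applied in both directions. This is possible because at stage $\alpha$ only fewer than $2^{\aleph_0}$ pieces or limit components at any given vertex have been used, while universality supplies continuum of each kind in each space. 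At limit stages I take unions and then closure via Lemma \ref{comp}; the final $\phi = \bigcup_\alpha \phi_\alpha$ is the required piece-preserving isometric bijection.

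The main obstacle I anticipate, in all three parts, is Definition \ref{defu}(b) on limit components. For (1) it requires verifying that distinct extensions of a fixed initial subtype land in genuinely distinct components of $\Pi\pp \setminus \{f_\emptyset\}$, which should follow from Lemma \ref{every}'s description of geodesics in $\Pi\pp$ together with Lemma \ref{wd}. For (2) and (3) the extension across a new limit component is subtler than across a new piece, since a component is itself an unbounded $\qq$-substructure rather than a single isometric copy of some $Q$; matching unused components of each type across the embedding demands precise cardinal bookkeeping, which is exactly where the branching-degree hypothesis in (2) and the universality of both spaces in (3) come in.
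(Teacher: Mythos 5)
Your parts (1) and (2) follow the paper's route: the universal tree is the tree product of continuum isometric copies of each $Q\in\qq$, and the embedding in (2) comes from Zorn's Lemma applied to isometric embeddings between good $\qq$-subtrees, extended at a boundary point across a fresh piece or a fresh limit component. One substantive omission in your part (2): the paper's poset consists of triples in which the embedding is required to \emph{preserve the types of points} (type I versus type II in the sense of good subtrees), and this is not cosmetic. At the maximal stage you locate the exit point $z$ of $[y,x]$ from the domain subtree and show $z$ has type II; but to find an \emph{unused} piece or limit component in the target at $\phi_0(z)$ you need to know that $\phi_0(z)$ is \emph{also} of type II, so that at most two components and two pieces are occupied there against continuum available. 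A maximal element of your poset, which does not encode type preservation, could a priori send a type II point to a type I point of the image subtree, leaving no room to extend. The goodness condition is precisely what makes the cardinal bookkeeping trivial (at most $2$ versus continuum), and type preservation is what transports it across the embedding; your closing remark about ``precise cardinal bookkeeping'' is resolved by this device rather than by comparing cardinalities of used and available components.

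For part (3) you genuinely diverge from the paper, which gets (3) almost for free from the same maximal triple: once $\freg_{\max}=\freg$, every point of the domain has type I, hence so does its image; if the image were proper, the exit point of a geodesic leaving $\free_{\max}$ would have type II, contradicting type preservation. Your transfinite back-and-forth can be made to work but is heavier: forcing arbitrary elements into the domain and range destroys the good-subtree invariant (a point of the partial subtree may come to meet three, four, \dots\ components without meeting all of them), so the extension step can no longer invoke the ``at most $2$'' bound and must instead be justified by counting --- at stage $\alpha$ fewer than continuum components of each type are occupied at any point, so universality still supplies a fresh one on either side. That count is correct provided the well-order has order type the initial ordinal of the continuum and each single extension move occupies only countably many pieces and components, and one must also check that the matching of occupied components by type is maintained so that later ``back'' steps can be answered; these are exactly the details the paper's type-preserving Zorn argument avoids. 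The trade-off is that your approach is the standard uniqueness argument for saturated-like structures and would generalize to other cardinals, while the paper's is shorter but leans on the specific I/II dichotomy.
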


 \begin{proof}
(1) For every space $Q\in \qq$ consider continuum isometric copies
of $Q$ and let $\qq'$ be the collection of all these copies. Then
the tree product $\Pi\qq'$ is a universal $\qq$-tree. Indeed, let
$s$ be a point in $\Pi\qq'$. Since $\Pi\qq'$ is homogeneous, we can
assume that $s=f_\emptyset$. Then the fact that for every $Q\in
\qq$, $x\in E(Q)$ there exists continuum $(Q,x)$-pieces containing
$s$ follows immediately from the proof of Lemma \ref{tgstructure}
and the fact that for every limit type $\tau$, there exist continuum
connected components of $\Pi\qq'\setminus \{s\}$ of type $\tau$
follows from Lemma \ref{geodesic} and the definition of the tree
product.

(2) Let $\freg$ be a $\qq$-tree of branching degree at most
continuum, $\free$ a universal $\mathcal Q$-tree. Consider the set
$\Delta $ of all triples $(\freg^\prime, \free^\prime, f)$, where
$\freg^\prime $ and $\free^\prime $ are good $\mathcal Q$-subtrees
of $\freg$ and $\free$, respectively, and $f\colon \freg^\prime \to
\free^\prime $ is an isometric embedding, which preserves the types
of points. $\Delta $ is non-empty since the triple $(\freg^\prime,
\free^\prime, f)$, where $\freg^\prime $ and $\free^\prime $ are
points and $f$ is the obvious map, belongs to $\Delta $.

We equip $\Delta $ with the standard ordering:
$$
(\freg^\prime, \free^\prime, f)\preceq (\freg^{\prime \prime}, \free^{\prime\prime}, g)\;\; {\rm iff} \;\; \freg^\prime\subseteq \freg^{\prime\prime},\; \free^\prime\subseteq \free^{\prime\prime},\; f\equiv g\vert_{\freg^\prime}.
$$
It is easy to show that the union of every chain of elements of $\Delta $ (defined in the standard way) is again an element of $\Delta $. Hence by the Zorn Lemma there exists a maximal triple $(\freg_{\max}, \free_{\max}, f_{\max})$. We are going to show that $\freg_{\max}=\freg$.

First assume that $\freg_{\max} \ne \freg$, i.e., there is $x\in \freg\setminus \freg_{\max} $. Let us fix any $y\in \freg_{\max}$. By Lemma \ref{int}, the intersection of a geodesic $[y,x]$ and $\freg_{\max}$ is either $[y,z]$ or $[y,z)$ for some $z\in [y,x]$. We deal with these cases separately and will arrive at a contradiction in both of them.
\smallskip

{\it Case 1.} $[y,x]\cap \freg_{\max}=[y,z]$. Let us show that $z$ is a vertex of type II in this case. Since $\freg_{\max}$ is good, it suffices to show that $\freg_{\max}\cap C= \emptyset $, where $C$ is the connected component of $\freg_{\max}\setminus \{ z\}$ that contains $x$.

We argue by contradiction. Suppose that there exists $t\in \freg_{\max}\cap C$. Let us fix any geodesic $[z,t]\subseteq C\cup \{ z\}$. If $C$ has a non-limit type, then by Lemma \ref{wd} there is a piece $P$ such that $P\cap [z,t]$ is a nontrivial initial subsegment of $[z,t]$. Since $[z,t]\in \freg_{\max}$, $P$ is contained in $\freg_{\max}$. As $P$ contains a nontrivial initial subsegment of $[z,x]$, this contradicts the choice of $z$. If $C$ has a limit type, then $]z,x]$ has a nontrivial intersection with $]z,t]$ by Lemma \ref{wd} and we get the same contradiction again.

Thus $z$ is a point of type II and hence so is $f_{\max}(z)$. Since $\freg$ and $\free$ are universal and $\freg_{\max}$, $\free_{\max}$ are good, there exists a 1-to-1 correspondence $g$  between connected components of $\freg\setminus (\{ z\}\cup \freg_{\max})$ and $\free\setminus (\{ f_{\max} (z)\}\cup \free_{\max})$ which preserves types. For every limit connected component $E$ of $\freg\setminus (\{ z\}\cup \freg_{\max})$  there exist geodesics $\gamma \subset E\cup\{ z\} $ and $\delta \subset g(E)\cup\{ f_{\max}(z)\} $ of equivalent types starting from $z$ and $f_{\max}(z)$, respectively. Passing to their initial subsegments, we can assume that $\gamma $ and $\delta $ have the same type. We join $\gamma $ and all pieces which intersect $\gamma $ in more than one point to $\freg_{\max}$ and do the same with $\delta $ and $\free_{\max}$. Further we add to $\freg_{\max}$ (respectively, $\free_{\max}$) all pieces of $\freg$ (respectively, $\free$) containing $s$ (respectively, $t$).

Let $\freg_1, \free_1$ be the subspaces of $\freg$ and $\free$ obtained from $\freg_{\max}$ and $\free_{\max}$ in this way. It is straightforward to check that $f_{\max}$ extends to an isometry between $\freg_1$ and $\free_1$. Let us first verify that $\freg_1$ and $\free_1$ are $\mathcal Q$-subtrees of $\freg$ and $\free$, respectively. Clearly they are path-connected. Further suppose that a piece $P$ intersects $\freg_1$ is at least $2$ points, say, $a$ and $b$. We have to show that $P\subset \freg_1$. There are 3 possibilities.

(1a) $a,b\in \freg_{\max}$. Then $P\subseteq \freg_{\max}\subset \freg_1$ as $\freg_{\max}$ is a $\mathcal Q$-subtree.

(1b) $a\in \freg_{\max}$, $b\notin \freg_{\max}$. Let $c_1, c_2$ be arcs connecting $a$ to $z$ and $z$ to $b$, respectively. Then $c_1\cap c_2=z$ as $c_1\setminus \{ z\}$ and $c_2\setminus \{ z\} $ belong to different connected components of $\freg\setminus \{ z\} $. Hence the concatenation $c=c_1c_2$ is an arc. By Lemma \ref{DS} (a) we have $c\subseteq P$. In particular, $z\in P$ and consequently $P\in  \freg_1$.

(1c) $a,b\notin \freg_{\max}$. If $a$ and $b$ are from different connected components of $\freg_{\max}\setminus\{z\}$, we obtain $P\in \freg_1$ arguing as in (1b). Otherwise it is easy to verify that $P\in \freg_1$ by our construction of $\freg_1$.

Finally we observe that $\freg_1$ and $\free_1$ are good. Indeed it is easy to see that types of points in $\freg_{\max}\setminus \{z\} $ do not change, $z$ becomes of type I, and all other points of $\freg_1$ are of type II. The same holds for $\free_1$.  This  contradicts maximality of the triple $(\freg_{\max}, \free_{\max}, f_{\max})$.

\smallskip

{\it Case 2.} $[y,x]\cap \freg_{\max}=[y,z[$. Let
$\freg_1=\freg_{\max}\cup \{ z\}$. Since $\free$ is complete,
$f_{\max}$ extends to an isometry $\freg_1\to
\free_1=\free_{\max}\cup \{ w\} $ for some $w\in \partial
\free_{\max}$.  Clearly this isometry preserves types of points
since both $z$ and $w$ have type II and $\freg_1$ and $\free_1$ are
good $\mathcal Q$-subtrees of $\freg$ and $\free$ respectively by
Lemma \ref{comp}. This contradicts maximality of the triple
$(\freg_{\max}, \free_{\max}, f_{\max})$ again.

Thus $\freg_{\max}=\freg$, i.e., $\freg $ isometrically embeds in $\free$.

(3) Suppose now that $\freg$ and $\free $ are two universal $\qq
$-trees. Let the triple $(\freg_{\max}, \free_{\max}, f_{\max})$ be
defined as above. We already know that $\freg_{\max}=\freg$. Suppose
that there exists $x\in \free\setminus \free_{\max}$. Let $y$ be any
point in $\free_{\max}$. Then $[y,x]\cap \free_{\max}=[y,z]$ as
$\free_{\max} $ is complete being the isometric image of a complete
space. Let $z_0$ be the preimage of $z$ in $\freg_{\max}$. Clearly
$z_0$ is of type I while $z$ is of type II. This contradicts our
assumption that $f_{\max}$ preserves types of points. Thus
$\free=\free_{\max}$, i.e., $\freg$ and $\free$ are isometric.
\end{proof}


\section{Asymptotic cones of relatively hyperbolic groups}


Throughout this section let $G$ denote a groups generated by a finite symmetric set $X$ and hyperbolic relative to a collection of proper subgroups $\{ H_1, \ldots , H_m\}$. Recall that the subgroups $H_1, \ldots , H_m$ are finitely generated in this situation. We assume that for every $i=1,\ldots , m$, $X$ contains a (finite) generating set $Y_i$ of $H_i$.

Given a non-principal ultrafilter $\omega $, and a scaling sequence $d$, let
$$
G(\omega, d)= \{ (g_i)^\omega \in \Pi ^\omega G \mid |g_i|=o_\omega (d_i)\} .
$$
Clearly $G(\omega, d)$ is a subgroup of $G^\omega $ and the induced action of $G(\omega, d)$ on $\CG $ fixes the point $e=(1,1,\ldots )^\omega \in \CG $. It follows from the Definition \ref{rhd} that $G(\omega, d)$ preserves the set of pieces of $\CG $, moreover a piece corresponding to $H_i$ is mapped to a piece of the same kind. In particular, $\CG $ preserves types of geodesics in $\CG$.  Hence $G(\omega, d)$ acts on the set $\mathcal {LC}_{\tau}(e)$ of limit connected components of $\CG\setminus \{ e\} $ of type $\tau $ as well as on the set $\mathcal P_i(e)$ of pieces of $\CG $ containing $e$ and corresponding to $H_i$. We say that an element $g\in G(\omega, d)$ is {\it hyperbolic} if $g=(g_i)^\omega $, where every $g_i$ is hyperbolic in $G$.

\begin{lem}\label{action}
Let $\omega $ a non-principal ultrafilter, $d$ a scaling sequence. Suppose that none of the peripheral subgroups is finite. Then no hyperbolic element of $G(\omega, d)$ preserves an element of $\mathcal P_i(e)$ or $\mathcal {LC}_\tau (e)$ for a non-trivial $\tau$.
\end{lem}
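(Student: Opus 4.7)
I would prove both parts by contradiction, reducing the limit-component case to the piece case.

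For \emph{pieces}: suppose $g=(g_n)^\omega$ were hyperbolic and stabilised $P\in\mathcal P_i(e)$. Since $e\in P$ one may write $P=\lio a_n H_i$ with $|a_n|=o_\omega(d_n)$. Then $gP=\lio g_n a_n H_i=P$, and Definition \ref{rhd}(a) forces $g_n a_n H_i=a_n H_i$ $\omega$-almost surely, i.e.\ $a_n^{-1}g_n a_n\in H_i$ $\omega$-a.s. Hence $g_n$ would be parabolic $\omega$-a.s., contradicting hyperbolicity. The same argument applies to any piece $P=\lio a_n H_i\subseteq\free$ (not only those meeting $e$) and to every power $g^k$ with $k\neq 0$, since $|a_n|=O_\omega(d_n)$ is all that is needed for Definition \ref{rhd}(a) to apply, and powers of hyperbolic elements remain hyperbolic.

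For \emph{limit components}: assume $g$ hyperbolic stabilises some $C\in\mathcal{LC}_\tau(e)$ with $\tau$ non-trivial. My goal is to produce a piece $P\subseteq C$ with $gP=P$ and then invoke the piece case. Pick a geodesic $\gamma\colon [0,r]\to C\cup\{e\}$ with $\gamma(0)=e$ realising type $\tau$; non-triviality yields a first piece $P$ met by $\gamma$ with entry $x=\gamma(a_1)$ ($a_1>0$) and exit $y=\gamma(b_1)$. Since $|g_n|=o_\omega(d_n)$ gives $ge=e$ and $gC=C$, the translate $g\gamma$ is another geodesic from $e$ into $C$ entering $gP$ at $gx$ and exiting at $gy$ at the same parameters. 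Lemma \ref{wd}(b) then gives that $\gamma$ and $g\gamma$ have equivalent types and share common points arbitrarily close to $e$. The initial sub-arcs $\gamma|_{[0,a_1]}$ and $g\gamma|_{[0,a_1]}$ both lie in the transversal tree $T_e$ (by Lemma \ref{DS}(c) together with Lemma \ref{ctree}), which is an $\R$-tree with unique geodesics; the shared common points near $e$ force agreement of these sub-arcs on a positive-length initial segment, and the type-equivalence will propagate this agreement all the way up to parameter $a_1$, yielding $gx=x$. A further application of Lemma \ref{DS}(c) at a common point beyond $b_1$ then gives $gy=y$, so $P\cap gP$ contains the two distinct points $x,y$, and property $(T_1)$ forces $gP=P$, reducing to the piece case.

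The \emph{main obstacle} is extending the initial-segment agreement of $\gamma$ and $g\gamma$ in $T_e$ all the way up to parameter $a_1$, and then locating a common point past $b_1$ in order to apply Lemma \ref{DS}(c). A priori, Lemma \ref{wd}(b) only supplies common points close to $e$, and the type-equivalence by itself does not pin down where $\gamma$ and $g\gamma$ branch inside $T_e$ (they could diverge into different sub-branches before reaching the first-piece entry). Bridging this gap will require combining the type-equivalence with the rigidity from Definition \ref{rhd}(a)---distinct ultrafilter-limit cosets cannot coincide in $\free$---and most likely iterating the Lemma \ref{wd}(b) argument on pairs $(\gamma,g^k\gamma)$ to control the $\langle g\rangle$-orbit of pieces attached to the branch of $T_e$ carrying $C$.
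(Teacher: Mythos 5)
Your treatment of the piece case is correct and self-contained: writing $P=\lio a_nH_i$ with $|a_n|=o_\omega(d_n)$ and invoking Definition~\ref{rhd}(a) to conclude $a_n^{-1}g_na_n\in H_i$ \oas, hence that $g_n$ is parabolic \oas, is a valid direct argument. The paper instead simply cites \cite[Lemma 4.19]{DS1} for this half, so here you have in effect reproved the external input; that is fine.

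The limit-component case, however, is not proved: you have correctly located the difficulty but not resolved it, and the step you leave open is the whole point. Two remarks. First, you miss the observation that makes Lemma~\ref{DS}(c) directly applicable: any common point $u=\gamma(t)=g\gamma(t)$ supplied by Lemma~\ref{wd}(b) satisfies $u=g(\gamma(t))$, i.e.\ $u$ is a \emph{fixed point} of $g$ (common points of two geodesics issuing from $e$ occur at equal parameters). Hence $\gamma|_{[0,t]}$ and $g\gamma|_{[0,t]}$ are two arcs with the \emph{same} endpoints $e$ and $u$; Lemma~\ref{DS}(c) forces them through the same entry and exit points of each piece, and then Lemma~\ref{DS}(a) together with $(T_1)$ shows that every piece meeting $\gamma|_{[0,t]}$ nontrivially is $g$-invariant. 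No propagation of agreement inside the transversal tree $T_e$ is needed; this is the paper's route. Second, what remains --- guaranteeing that such a piece exists, i.e.\ that a common point occurs \emph{past} the first piece entry $\gamma(a_1)$ --- is exactly the obstacle you name, and your proposed remedies (``combining the type-equivalence with the rigidity from Definition~\ref{rhd}(a)\dots most likely iterating\dots'') are intentions, not arguments. Note also that your intermediate goal $gx=x$ would not suffice even if achieved: $P$ and $gP$ could still meet only at the single point $x$, so you genuinely need the second identification $gy=y$, which again presupposes a common point of $\gamma$ and $g\gamma$ beyond $b_1$ --- the very thing that is missing. As written, the limit-component half of the lemma remains unproved.
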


\begin{proof}Suppose that an element $x\in G(\omega, d)$ fixes a piece $\lio (g_iH_k)$, where $k\in \{ 1, \ldots , m\}$. Then by \cite[Lemma 4.19]{DS1}, $x$ is not hyperbolic.

Further let $C\in \mathcal {LC}_{\tau}(e)$ where $\tau$ is non-trivial. Suppose that a hyperbolic element $x\in G(\omega, d)$ fixes $C$. Let $\gamma $ be any (nontrivial) geodesic segment in $C\cup\{ e\}$ originated at $e$. Since $x$ is an isometry of $\CG $, it maps $\gamma $ to another geodesic segment $\delta $ in $C\cup\{ e\}$. By Lemma \ref{wd}, $\gamma$ and $\delta $ have at least one common point $u\ne e$. Clearly $x$ fixes $e$. Lemma \ref{DS} (c) now implies that $x$ fixes all pieces which intersect nontrivially the initial subsegment $[e,u]$ of $\gamma $ (or $\delta $). That set of pieces is non-empty because $\tau$ is non-trivial. However this contradicts the assumption that $x$ is hyperbolic.
\end{proof}

Given a word $W=h_1\ldots h_k$, where $h_j\in \mathcal H$ for $j=1, \ldots , k$,  we say that a path $p$ in $\Cay (G,X)$ is {\it corresponding} to $W$ if the following hold.
\begin{enumerate}
\item The path $p$ decomposes as $p=q_1\ldots q_k$.
\item If $h_j\in H_i$, then $\Lab (q_j)$ is a shortest word in the alphabet $Y_i$ representing $h_j$, $j=1, \ldots, k$.
\end{enumerate}

\begin{lem}\label{o}
There exist constants $\lambda$, $c$, $D$ with the following property. Let $W=h_1\cdots h_k$ be a word as above, where $|h_i|_X\ge D$. Suppose that no two consecutive letters $h_i, h_{i+1}$ belong to the same $H_i$. Then every path $p$ in $\Cay (G,X)$ corresponding to $W$ is $(\lambda , c)$-quasi-geodesic.
\end{lem}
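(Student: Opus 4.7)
The plan is to bound the length of sub-paths of $p$ via Lemma~\ref{Omega} applied to a cycle in $\G=\Cay(G,X\cup\mathcal H)$. Given a subpath $p'=q_{j_1}\cdots q_{j_2}$ with $k'=j_2-j_1+1$ segments, I will pick a geodesic $r$ in $\cgx$ from $p'_+$ to $p'_-$ and, in $\G$, replace each $q_j$ by a single edge $e_j$ labeled $h_j\in\mathcal H$; together with $r$ this yields a cycle $\tilde o=e_{j_1}\cdots e_{j_2}\,r$ of combinatorial length $k'+\ell(r)$. Sub-paths whose endpoints do not align with segment boundaries can be handled by extending to the nearest boundary, with the bounded correction absorbed into the additive constant.

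The favorable case is when every $e_j$ is an isolated $H$-component of $\tilde o$ (pairwise non-connected, and not connected to any $H$-component of $r$). In that case Lemma~\ref{Omega} gives
\[
L(k'+\ell(r))\;\ge\;\sum_{j=j_1}^{j_2}l_X(e_j)\;=\;\sum_{j=j_1}^{j_2}|h_j|_X\;\ge\;Dk',
\]
so choosing $D>L$ yields $k'\le L\ell(r)/(D-L)$ and hence $\sum|h_j|_X\le LD\ell(r)/(D-L)$. Converting from the $X$-length $|h_j|_X$ to $\ell(q_j)=|h_j|_{Y_{i(j)}}$ via the undistortedness of peripheral subgroups in a finitely generated relatively hyperbolic group (which gives a constant $K$ with $|h|_{Y_i}\le K|h|_X$ for $h\in H_i$) produces $\ell(p')\le KLD\,\dx(p'_-,p'_+)/(D-L)$, the required quasi-geodesic estimate.

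The remaining work is to reduce to the isolated case. The hypothesis that consecutive $h_j,h_{j+1}$ lie in distinct peripheral subgroups immediately excludes connectedness of $e_j$ with $e_{j+1}$ (different types) and with $e_{j+2}$ (the latter would force $h_{j+1}\in H_{i(j)}$, contradicting the hypothesis). For $e_j$ connected to $e_{j'}$ with $j'-j\ge 3$, the product $h_{j+1}\cdots h_{j'-1}$ represents an element of $H_{i(j)}$, so there is an $H_{i(j)}$-edge in $\G$ from $(q_j)_+$ to $(q_{j'})_-$; I would proceed by strong induction on $k$, applying the inductive hypothesis to the strictly shorter sub-word $h_{j+1}\cdots h_{j'-1}$ and using it to shortcut and eliminate the offending connection in $\tilde o$. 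Connections between some $e_j$ and an $H$-component of $r$ are eliminated analogously, by shortcutting $r$ along the connecting edge.

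The main technical obstacle will be the bookkeeping in this shortcut-and-induct step: a collapsed sub-word can produce a letter of $X$-length below the threshold $D$, to which the induction cannot be directly reapplied. The standard resolution is to choose $D$ sufficiently large relative to $L$ (say $D\ge 3L$) so that the isolated-case estimate alone dominates the length bound, and to absorb the residual contributions from shortcuts and boundary adjustments into a bounded additive constant $c$, yielding uniform constants $(\lambda,c)$ valid for every subpath of $p$.
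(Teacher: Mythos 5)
Your overall architecture matches the paper's: collapse each $q_j$ to a single $\mathcal H$-edge $e_j$ in $\Cay(G,X\cup\mathcal H)$, close up with an $X$-geodesic, and feed the resulting cycle to Lemma \ref{Omega}; your computation in the favorable case (all $e_j$ isolated) is correct and is essentially the paper's. The gap is in the reduction to that case, which is the real content of the lemma. The paper shows outright that \emph{every} component of the collapsed path is isolated, by the following device: if $e_i$ and $e_{i+a}$ are connected, choose such a pair with $a\ge 2$ \emph{minimal}, and let $\bar e$ be the connecting $\mathcal H$-edge; then the short cycle $c=e_{i+1}\cdots e_{i+a-1}\bar e$ has all of its components isolated precisely because of that minimality, so Lemma \ref{Omega} applies to $c$ itself and yields $D(a-1)\le \sum_{b=1}^{a-1} l_X(e_{i+b})\le L\, l(c)=La$, which is false for every $a\ge 2$ once $D>2L$. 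No induction and no surgery on the original cycle is needed.

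Your shortcut-and-induct scheme does not substitute for this. Applying the inductive hypothesis to $h_{j+1}\cdots h_{j'-1}$ only tells you its path is quasi-geodesic, i.e.\ that the connecting element of $H_{i(j)}$ is long in $X$; that is not a contradiction, so nothing is "eliminated". And if you actually perform the shortcut, the letters $h_j,\dots,h_{j'}$ merge into a single $\mathcal H$-letter, so any estimate you then extract controls $l_X$ of the merged element rather than $\sum_{j\le m\le j'}|h_m|_X$ — the surgery destroys the quantity you are trying to bound. You correctly sense the obstruction ("a collapsed sub-word can produce a letter of $X$-length below the threshold"), but the proposed cure — enlarge $D$ to $3L$ and absorb residuals into $c$ — does not address it; the missing idea is the minimal-span trick above. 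Two smaller points: a connection between $e_j$ and $e_{j+2}$ does not literally contradict the hypothesis (it only forces $h_{j+1}\in H_{i(j)}\cap H_{i(j+1)}$ as a group element, which the hypothesis on designated subgroups does not exclude); it is excluded by the same argument applied to the length-two cycle $e_{j+1}\bar e$, which gives $|h_{j+1}|_X\le 2L<D$. Finally, your conversion from $\sum|h_j|_X$ to $\ell(p')$ uses undistortedness of the $H_i$; this is indeed a known fact, but it should be cited as an external input (e.g.\ from \cite{Osi06a}) rather than taken for granted, since the paper itself later derives undistortedness from the $k=1$ case of this very lemma.
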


\begin{proof}
Let $L$ be the constant from Lemma \ref{Omega}. Assume that
\begin{equation}\label{eD}
D=L+2
\end{equation}
(the exact value of $D$ will be specified later.) Let $p^\prime $ be the path in $\Cay (G, X\cup\mathcal H)$ obtained from $p$ by replacing every component of $p$ with a single edge, i.e., every $q_j$ is replaced by an edge $r_j$ labeled by $h_j$. Thus $p^\prime =r_1\ldots r_k$. Let us show that all components of $p^\prime $ are isolated.

Indeed suppose that some components, say $r_i$ and $r_{i+a}$, $a\ge 1$, of $p^\prime $ are connected.  By the assumption of the lemma, $a\ge 2$. Let $e$ be an edge in $\Cay (G, X\cup\mathcal H)$ connecting $(r_i)_+$ to $(r_{i+a})_-$. Consider the cycle $c=r_{i+1}\ldots r_{i+a-1}e^{-1}$. Without loss of generality we can assume that $a$ is minimal possible and hence all components of $c$ are isolated. The total $X$-length of all components of $c$ is at most $Ll(c)=L(a-1)+1$ by Lemma \ref{Omega}. On the other hand, it is at least $D(a-1)>(L+1)(a-1) \ge L(a-1)+1$ by assumptions of our lemma and (\ref{eD}). This contradiction implies that all components of $p^\prime $ are isolated.

Let now $v$ be a subpath of $p$ such that all components of $v$ have $X$-length at least $D$. We denote by $r$ the number of components in $v$. Let $w$ be a geodesic in $\cgx$ connecting $v_+$ to $v_-$. Since $\Lab (w)$ is a word in $X$, $w$ has no components at all.  Thus all components of the cycle $vw$ in $\Cay (G, X\cup\mathcal H)$ are isolated. By Lemma \ref{Omega} we have $l(v)\le L(l(w)+r)$, which yields
\begin{equation}\label{lv}
\dx (v_-, v_+)=l(w)\ge l(v)/L -r \ge (l(v)-r)/L\ge (D-1)l(v)/(DL).
\end{equation}
Now let $u$ be an arbitrary subpath of $p$. Note that every component of $u$, except possibly for the first and the last ones, has $X$-length at least $D$. Removing components of length less than $D$ from $u$ if necessary, we obtain a path $v$ which satisfies (\ref{lv}). Hence  $\dx (u_-, u_+)\ge (D-1)l(u)/(DL) - 2D$.
\end{proof}

\begin{lem}\label{ttr} The limit $\lio \la E(f_1)\ra $ in $\CG$ is a bi-infinite geodesic inside the transversal tree of $\CG$.
\end{lem}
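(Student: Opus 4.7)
The plan is to apply Lemma \ref{Eg}(b) to absorb $E(f_1)=E_G(f_1)$ into the peripheral collection; after this enlargement the conclusion becomes essentially automatic from the tree-graded structure of the enlarged relatively hyperbolic system. By Lemma \ref{Eg}(b), $G$ is hyperbolic relative to $\{H_1,\ldots ,H_m,E_G(f_1)\}$, so applying Definition \ref{rhd}(b) to this enlarged system shows that $\CG $ is tree-graded with respect to the enlarged collection of pieces $\mathcal L' = \mathcal L\cup \{\lio g_j E_G(f_1):|g_j|=O(d_j)\}$. In particular $\lio E_G(f_1)$ is itself a piece of $\mathcal L'$ passing through the basepoint $e$.

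Since $f_1$ is hyperbolic of infinite order and $E_G(f_1)$ is elementary, $E_G(f_1)$ is virtually $\Z $. As a peripheral subgroup in the enlarged system it is undistorted, so every asymptotic cone of $E_G(f_1)$ is isometric to $\R $; hence the piece $\lio E_G(f_1)$ is a bi-infinite geodesic in $\CG $. Because $\la f_1\ra $ has finite index in $E_G(f_1)$, one has $\lio \la E(f_1)\ra = \lio E_G(f_1)$ as subsets of $\CG $.

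To verify that this geodesic lies in the transversal tree $T_e$ with respect to the original tree-graded structure, fix any $y\in \lio E_G(f_1)$. Pieces of a tree-graded space are totally geodesic: by Lemma \ref{DS}(a) applied with $\mathcal L'$, the intersection of any geodesic $[e,y]$ with the piece $\lio E_G(f_1)$ is a connected sub-arc, and since it already contains both endpoints it must equal $[e,y]$. Thus every geodesic $[e,y]$ lies inside $\lio E_G(f_1)$. Now let $P=\lio g_jH_i\in \mathcal L$ be arbitrary. Because $f_1$ is hyperbolic, it is not conjugate into any $H_i$, so $E_G(f_1)\ne H_i$ for all $i$, and Definition \ref{rhd}(a) applied to the enlarged peripheral system forces $P\ne \lio E_G(f_1)$. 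By property $(T_1)$ in $\mathcal L'$ we obtain $|P\cap \lio E_G(f_1)|\le 1$, so every geodesic $[e,y]$ meets each original piece in at most one point. This is exactly the defining property of $T_e$, hence $\lio E_G(f_1)\subseteq T_e$.

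The essential ingredient is Lemma \ref{Eg}(b); without it, one would have to argue directly that no sub-arc of the bi-infinite quasi-geodesic $(f_1^n)^\omega $ shadows a coset of any $H_i$ for a distance comparable to $d_j$, which would require a careful quantitative argument via Lemmas \ref{Omega} and \ref{o}. Absorbing $E_G(f_1)$ into the peripheral structure makes this non-backtracking a free consequence of property $(T_1)$.
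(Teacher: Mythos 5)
Your proposal is correct and follows essentially the same route as the paper: the paper's one-line proof likewise invokes the fact that, after enlarging the peripheral collection via Lemma \ref{Eg}(b), $\CG$ is tree-graded with respect to the limits of cosets of the $H_i$ together with those of $E(g_1)$ (and $E(g_2)$), so that $\lio E(f_1)$ is a piece meeting each original piece in at most one point and hence lies in the transversal tree. Your write-up simply makes explicit the steps (undistortion of the virtually cyclic peripheral subgroup, property $(T_1)$, the definition of $T_e$) that the paper leaves to the reader.
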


\proof It follows from the fact that, by the definition of  relatively hyperbolic groups, $\CG$ is tree-graded with respect to limits of cosets of $H_i$ and limits of cosets of $\la E(f_i)\ra$, $i=1,2$.\endproof

We say that two collections of metric spaces $\mathcal Q=\{ Q_i\}_{i\in I}$ and $\mathcal R=\{ R_i\} _{i\in I}$ are {\it uniformly bi-Lipschitz equivalent} if there is a family of bi-Lipschitz maps $\phi_i\colon Q_i\to R_i$ with uniformly bounded constants. The lemma below follows immediately from the definition of the metric in tree products.

\begin{lem}\label{ble}
Suppose that two collections $\mathcal Q=\{ Q_i\}_{i\in I}$ and $\mathcal R=\{ R_i\} _{i\in I}$ of homogeneous complete geodesic spaces are uniformly bi-Lipschitz equivalent. Then the tree products $\Pi\qq$ and $\Pi\rrr$ are bi-Lipschitz equivalent.
\end{lem}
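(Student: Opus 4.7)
The plan is to construct an explicit bi-Lipschitz bijection $\Phi\colon\Pi\qq\to\Pi\rrr$.  Let $K\ge 1$ be a common bi-Lipschitz constant for the family $\{\phi_i\}$, extended by $\phi_0=\mathrm{id}_{\ttt}$ on the transversal-tree piece, so every piece index $\alpha\in I\cup\{0\}$ carries a $K$-bi-Lipschitz map $\phi_\alpha\colon Q_\alpha\to R_\alpha$.  For $f\in\Pi\qq\setminus\{f_\emptyset\}$, listing the maximal subintervals of $A_f$ as $\{]a_j,b_j[\}_{j\in J}$ with constant values $(x_j,y_j)\in\Omega_{\alpha_j}$, we define a continuous non-decreasing reparametrization $\Lambda_f\colon[0,d(f)]\to[0,\Lambda_f(d(f))]$ whose density equals $\ell_j/(b_j-a_j)\in[1/K,K]$ on each $]a_j,b_j[$, where $\ell_j:=d_{R_{\alpha_j}}(\phi_{\alpha_j}(x_j),\phi_{\alpha_j}(y_j))$, and equals $1$ on the complement of $\bigcup_j]a_j,b_j[$.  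Since every density lies in $[1/K,K]$, $\Lambda_f$ is $K$-bi-Lipschitz and in particular a homeomorphism.  Set $\Phi(f_\emptyset):=f_\emptyset$, and for $f\ne f_\emptyset$ let $\Phi(f)$ be the partial function on $]0,\Lambda_f(d(f))[$ with domain $\Lambda_f(A_f)$ and constant value $(\phi_{\alpha_j}(x_j),\phi_{\alpha_j}(y_j))$ on $\Lambda_f(]a_j,b_j[)$.

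Next we verify $\Phi(f)\in\Pi\rrr$ by checking $(\Pi_1)$--$(\Pi_5)$.  Properties $(\Pi_1)$--$(\Pi_3)$ are immediate from $\Lambda_f$ being a homeomorphism together with the identity $\Lambda_f(b_j)-\Lambda_f(a_j)=\ell_j=d(\phi_{\alpha_j}(x_j),\phi_{\alpha_j}(y_j))$; $(\Pi_4)$ is preserved since the forbidden ``fake exit'' configuration $(x_1,x_2),(x_2,x_3)$ inside a single piece is mapped by the injective $\phi_\alpha$ to the same forbidden configuration in the target piece.  The delicate step, which we expect to be the main obstacle, is $(\Pi_5)$: suppose for contradiction that $\Phi(f)$ has backtracking around some $p$ on $]p-q,p+q[$.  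The reflection $\sigma_p$ then induces an involution $\sigma$ on the maximal intervals of $A_{\Phi(f)}$ meeting $]p-q,p+q[$, forcing $\ell_j=\ell_{\sigma(j)}$ and $(x_{\sigma(j)},y_{\sigma(j)})=(y_j,x_j)$ in a common piece; therefore $b_j-a_j=b_{\sigma(j)}-a_{\sigma(j)}$, so the density $\Lambda_f'$ takes equal values on paired intervals in the $t$-coordinate.  Combined with $\Lambda_f'\equiv 1$ on the complement, density of $\{a_j,b_j\}_j$ in $\Lambda_f^{-1}(]p-q,p+q[)$, and continuity of $\Lambda_f$, this forces $\Lambda_f$ to be symmetric about $p':=\Lambda_f^{-1}(p)$; then the interval positions $]a_j,b_j[\leftrightarrow]a_{\sigma(j)},b_{\sigma(j)}[$ of $f$ become $\sigma_{p'}$-symmetric with reversed values, i.e.\ $f$ itself would backtrack, contradicting $f\in\Pi\qq$.

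The bi-Lipschitz bound is then a direct computation from Definition \ref{metric}.  For $f,g\in\Pi\qq$ with $s=\di(f,g)$, the reparametrizations $\Lambda_f,\Lambda_g$ coincide on $[0,s]$ because they depend only on the shared initial structure, so $\di(\Phi(f),\Phi(g))=\Lambda_f(s)$, and---using the injectivity of $\phi_\alpha$ to handle the subcase $x_f(g)\ne x_g(f)$---the pair $(\Phi(f),\Phi(g))$ falls into the same case as $(f,g)$.  Each summand in the formula for $D(\Phi(f),\Phi(g))$, namely $d(\Phi(f))-\Lambda_f(a_g(f))$ (or $d(\Phi(f))-\Lambda_f(s)$ in Case 2), its $g$-counterpart, and in Case 1 the piece distance $d(\phi(y_g(f)),\phi(y_f(g)))$, lies between $1/K$ and $K$ times the corresponding summand for $D(f,g)$ by the $K$-bi-Lipschitz property of $\Lambda_f,\Lambda_g,\phi$.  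This yields $D(\Phi(f),\Phi(g))\in[D(f,g)/K,\,K\,D(f,g)]$.  Bijectivity of $\Phi$ comes from the symmetric construction using $\phi_i^{-1}$, so $\Pi\qq$ and $\Pi\rrr$ are $K$-bi-Lipschitz equivalent.  Everything outside the $(\Pi_5)$-verification is routine case analysis that directly tracks the terms appearing in Definition \ref{metric}.
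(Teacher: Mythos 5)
Your proposal is correct: the paper states this lemma without proof (``follows immediately from the definition of the metric in tree products''), and your explicit reparametrized map $\Phi$ is precisely the construction that assertion presupposes, with the divergence point transforming as $\di(\Phi(f),\Phi(g))=\Lambda_f(\di(f,g))$, the case type preserved by injectivity of the $\phi_\alpha$, and the bound $D(\Phi(f),\Phi(g))\in[D(f,g)/K,KD(f,g)]$ following term-by-term from Definition \ref{metric} exactly as you compute. The one point needing genuine care --- that $\Phi(f)$ inherits $(\Pi_5)$, which you get because paired intervals carry equal densities (and the density is $1$ off $A_{\Phi(f)}$), so the conjugated reflection $\Lambda_f^{-1}\circ\sigma_p\circ\Lambda_f$ is again an isometric reflection and backtracking of $\Phi(f)$ would force backtracking of $f$ --- is handled correctly.
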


\begin{thm}\label{rhg} Let $G$ be a group generated by a finite set $X$ and
hyperbolic relative to a collection of subgroups $\{ H_1, \ldots ,
H_n\} $. Then for every non-principal ultrafilter $\omega $ and
every scaling sequence $d=(d_i)$, the asymptotic cone $\CG $ is
bi-Lipschitz equivalent to the universal $\mathcal Q$-tree, where
$\mathcal Q=\{ \Con ^\omega (H_i, d)\mid i=1, \ldots , n\}$.
\footnote{Although it is  true that a metric space which is
bi-Lipschitz equivalent to a $\mathcal Q$-tree is itself isometric
to a $\mathcal Q'$-tree with pieces from $\mathcal Q'$ bi-Lipschitz
equivalent to pieces from $\mathcal Q$, the size of $\mathcal Q'$
may not be the same as the size of $\mathcal Q$, and the $\mathcal
Q'$-tree may not be universal even if the $\mathcal Q$-tree is
universal.}
\end{thm}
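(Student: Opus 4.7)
The plan is to realize $\CG$ as a universal $\mathcal Q$-tree and then invoke Theorem \ref{univ}(3), using Lemma \ref{ble} to pass from isometry to bi-Lipschitz equivalence at the end. By Definition \ref{rhd}(b), $\CG$ is tree-graded with pieces the limits $\lio(g_jH_i)$, each isometric to $\Con^\omega(H_i,d)\in\mathcal Q$, so $\CG$ is already a $\mathcal Q$-tree in the sense of Definition \ref{def1}; by Lemma \ref{treesrem} I may further enlarge the collection of pieces by the transversal trees, putting $\CG$ in the form in which the universality conditions of Definition \ref{defu} are naturally checked.

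The first substantive step is to show that every transversal tree $T_x$ of $\CG$ is isometric to the universal $\mathbb R$-tree $\ttt$. By homogeneity of $\CG$ it suffices to treat $T_e$. Assuming $G$ is non-elementary with respect to its peripheral family (the elementary cases reduce to a single peripheral cone or a virtually cyclic group and are dispatched by inspection), pick a hyperbolic element $g\in G$. By Lemma \ref{Eg} one can add $E_G(g)$ to the peripheral family without changing $\CG$, and then Lemma \ref{ttr} realizes $\lio\langle g\rangle$ as a bi-infinite geodesic inside $T_e$. Acting on this geodesic by translates drawn from $G(\omega,d)$ that lie outside $E_G(g)$, and invoking Lemma \ref{action} to ensure that distinct translates produce geometrically distinct directions at $e$, one constructs continuum many geodesic rays in $T_e$ emanating from $e$ and pairwise meeting only at $e$. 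The uniqueness theorem of \cite{MNO,EP} then identifies $T_e$ isometrically with $\ttt$.

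Next I would verify the universality conditions (b) and (c) of Definition \ref{defu} at $e$; completeness is automatic. For continuum many $(Q,x)$-pieces at $e$ with $Q=\Con^\omega(H_k,d)$, the key is that the stabilizer of $e$ is exactly $G(\omega,d)$, and that by Definition \ref{rhd}(a) two sequences of cosets $(a_iH_k),(a_i'H_k)$ with $|a_i|,|a_i'|=o(d_i)$ determine the same piece of $\CG$ iff $a_iH_k=a_i'H_k$ $\omega$-almost surely. Continuum many pairwise inequivalent such cosets can be produced from ultraproducts of powers and conjugates of $g$ with exponents growing as $o(d_i)$; Lemma \ref{o} (the quasi-geodesic criterion for words in $\mathcal H$) prevents accidental collapse, and a suitable choice of basepoints in $Q$ realizes any prescribed $x\in E(Q)$. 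The same scheme yields continuum many limit components of any prescribed type $\theta$ at $e$: Lemma \ref{action} prevents hyperbolic elements of $G(\omega,d)$ from stabilizing a fixed limit component $C$, so the orbit of $C$ produces continuum distinct translates, each of the same type $\theta$ because $G(\omega,d)$ preserves the canonical isometry structure on pieces.

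With universality in hand, Theorem \ref{univ}(3) identifies $\CG$ isometrically with the universal $(\mathcal Q\cup\{\ttt\})$-tree; Theorem \ref{tpmain} then shows that this agrees with the tree product $\Pi\mathcal Q'$ of continuum copies of each $Q\in\mathcal Q$, i.e.\ with the universal $\mathcal Q$-tree of Theorem \ref{univ}(1), as a $\mathcal Q$-tree. The conclusion (bi-Lipschitz equivalence) follows. The main obstacle I expect is the continuum-counting in the preceding paragraph: guaranteeing that the orbit translates are of \emph{exactly} the prescribed type $\theta$ rather than merely an equivalent type in the sense of Section \ref{5} requires careful coordination between the $G(\omega,d)$-action and the canonical isometries $\iota\colon P\to Q$ and orbit representatives $D(Q)$, $E(Q)$ used to define types. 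Lemmas \ref{action} and \ref{o} are precisely tailored to enable this bookkeeping, which is the heart of the argument.
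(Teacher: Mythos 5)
Your overall strategy coincides with the paper's: show that $\CG$, viewed as a tree-graded space via Definition \ref{rhd}, is a \emph{universal} $\qq_0$-tree (with $\qq_0$ the cones of the $H_i$ in the induced metric), then use the uniqueness of universal trees and Lemma \ref{ble} (undistortedness of the $H_i$) to pass to bi-Lipschitz equivalence with the universal $\qq$-tree. Your treatment of condition (c) and of the trivial type via hyperbolic elements, Lemma \ref{Eg}, Lemma \ref{ttr} and Lemma \ref{action} is essentially the paper's argument.

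However, there is a genuine gap in your verification of condition (b) for a \emph{nontrivial} type $\theta$. You write that ``the orbit of $C$ produces continuum distinct translates, each of the same type $\theta$,'' but this presupposes that a limit connected component $C$ of type $\theta$ \emph{exists} at $e$ in the first place. Universality quantifies over \emph{all} abstract $\qq$-types $(U,f)$ --- an arbitrary countable disjoint union of intervals together with an arbitrary assignment of pairs from $D(Q)$ --- and nothing you have said produces a geodesic in $\CG$ realizing such a prescribed type. This is the technical heart of the paper's proof: one fixes a representative $(U,f)$ with $U\subset[0,a]$, approximates it by finite unions $U_m$, and builds explicit words $Z_m$ by concatenating, for each interval of $U_m$, a bounded power of a hyperbolic generator $g_1$ followed by a prefix of a shortest $Y_{k}$-word representing the prescribed element of $H_{k}$, and, for each complementary interval, a power of a second independent hyperbolic generator $g_2$. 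Lemma \ref{o}, applied to the enlarged peripheral family $\{H_1,\dots,H_n,E(g_1),E(g_2)\}$, guarantees these paths are uniformly quasi-geodesic, so their $\omega$-limit is a bi-Lipschitz arc; a tree-graded argument (intersections with pieces and transversal trees are geodesic, hence the arc is geodesic) then shows the limit is a geodesic of type $\tau$. Only after this existence step does Lemma \ref{action} yield continuum many components of type $\theta$. Note also that the ``obstacle'' you flag --- obtaining exactly the prescribed type rather than an equivalent one --- is not the real issue, since the type of a component is by definition an equivalence class and $G(\omega,d)$ preserves the peripheral structure; the missing ingredient is the realization of arbitrary types, which your proposal does not supply.
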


\begin{proof}

Clearly it suffices to prove the theorem for any particular finite
generating set $X$ of $G$. In what follows we assume (as above) that
$X$ contains a generating set $Y_i$ of $H_i$ for every $i=1, \ldots
, n$. In addition we assume that $X$ also contains two hyperbolic
elements $g_1,g_2\in G$ such that $G$ is hyperbolic relative to $\{
H_1, \ldots , H_n, E(g_1)\}$ as well as relative to $\{ H_1, \ldots
, H_n, E(g_1), E(g_2)\}$.

Recall that by Definition \ref{rhd}, $\CG$ is isometric to a
$\mathcal Q_0$-tree, where $\mathcal Q_0=\{ \Con ^\omega ((H_i, \d
_X \vert_{H_i}), d) \mid i=1, \ldots , n\}$. Here $\Con ^\omega
((H_i, \d _X \vert_{H_i}), d)$ is the asymptotic cone of $H_i$
endowed with metric induced from $G$. Recall that subgroups $H_i$
are undistorted in $G$. This is well known and follows, for example,
from Lemma \ref{o} applied in the case $k=1$. Hence $\Con ^\omega
((H_i, \d _X \vert_{H_i}), d)$ is bi-Lipschitz equivalent to
asymptotic cones $\Con ^\omega (H_i, d)$, where $H_i$ is endowed
with an intrinsic word metric (e.g., with respect to $Y_i$). Thus we
only need to show that the $\qq _0$-tree $\CG $ is universal. Then
applying Lemma \ref{ble} finishes the proof.

Since every asymptotic cone of a group is homogeneous, geodesic, and
complete, we only have to verify conditions (b) and (c) in
Definition \ref{defu}. Without loss of generality we can assume that
$s$ is represented by $(1,1, \ldots )^\omega $.

We start with (c). Since every $P$ is homogeneous (being an
asymptotic cone of $H_i$), the choice of $x$ does not matter, and
instead of $(Q,x)$-pieces containing $s$, we can consider $Q$-pieces
containing $s$ (i.e. pieces containing $s$ that are isometric to
$Q$). Note that for every sequence of numbers $n=(n_j)$ with
$n_j=o_\omega(d_j)$, the element of $G(\omega, d)$ represented by
$(g_1^{n_j})^\omega$ is hyperbolic. Clearly there are uncountably
many pairwise distinct such elements. Hence condition (c) holds by
Lemma \ref{action}.

Now let $\theta\in \Theta$ an equivalence class of a $\qq _0$-type.
Suppose first that $\theta$ is trivial, i.e., it is the equivalence
class of the type $(\emptyset, \emptyset)$. Let $\qq^\prime
=\qq_0\cup\{\lio E(g_1)\} $.  It follows from Definition \ref{rhd}
that we can also think of $\CG $ as a $\qq^\prime $-tree. Applying
Lemma \ref{action} to the piece $E=\lio E(g_1)$, we obtain a set of
cardinality continuum of isometric copies $\{E_\alpha \}_{\alpha \in
A}$ of $E$ such that $E_\alpha \cup E_\beta =\{ s\}  $ whenever
$\alpha \ne \beta $. Let $C_\alpha $ be a connected component of
$\CG \setminus \{ s\} $ which intersects $E_\alpha $ nontrivially.
By definition \ref{defu} applied to the collection of peripheral
subgroups $\{ H_1, \ldots , H_n, E(g_1)\}$, every $E_\alpha $
intersects every piece of the $\qq _0$-tree $\CG $ in at most one
point. In particular, $C_\alpha $ belongs to the transversal tree of
the $\qq _0 $-tree $\CG $ at $s$. By Lemma \ref{ctree}, every
$C_\alpha $ has type $\theta$. This proves (c) for trivial $\theta
$.

Now suppose that $\theta$ is not trivial. We fix a representative
$\tau=(U,f)$ of the equivalence class $\theta$. Without loss of
generality we can assume that $U\subset [0,a]$ for some $a\in
\mathbb R_+$. Let us enumerate intervals in $U$ in an arbitrary
order, and for every $m=1,2,...$, let $U_m$ be the subset of $U$
consisting of the union of the first $m$ intervals. We consider the
partition of the interval $[0,a]$ whose classes are intervals from
$U_m$ and the intervals of the complement $[0,a]\setminus U_m$.
Remove the endpoints of all intervals of that partition. The
resulting collection of open intervals will be denoted by $V_m$.
Clearly, $U_m\subseteq V_m$. We number intervals in $V_m$ from left
to right: $V_m^1,...,V_m^{s_m}$, where $s_m=\# V_m$.

The map $f$ associates to each interval $]p,q[$ in $U$ an element of $(u^{]p,q[},w^{]p,q[})\in D(Q(p,q))$ for some $Q(p,q)=\Con ^\omega ((H_{k(p,q)}, \d _X \vert_{H_{k(p,q)}}), d)\in \qq_0$. Since $Q(p,q)$ is homogeneous, we can assume that $u^{(p,q)}$ is the point of $\Con ^\omega ((H_{k(p,q)}, \d _X \vert_{H_{k(p,q)}}), d)$ represented by $(1)^\omega$. Suppose that $w^{(p,q)}$ is represented by $(w^{({p,q})}_i)^\omega$, where $w^{({p,q})}_i$ are elements of $H_{k(p,q)}$. In particular, we have
\begin{equation}\label{lwpq}
\lio |w^{({p,q})}_i|_X/d_i=q-p.
\end{equation}
We represent $w^{({p,q})}_m$ by a shortest word $W^{({p,q})}_m$ in the generators $Y_{k(p,q)}$.

Below we say ``$X$-length" instead of ``length with respect to the generating set $X$" for brevity. For each $m$ and each $(p,q)\in V_m$, we construct a word $Z^{({p,q})}_m$ as follows. If $(p,q)\in U_m$, we set $Z^{({p,q})}_m= T^{({p,q})}_mS^{({p,q})}_m $, where $S^{({p,q})}_m$ is a prefix of the word $W^{({p,q})}_m$ and $T^{({p,q})}_m$ is a power of $g_1$ such that
\begin{itemize}
\item[(Z$_1$)] The $X$-length of the element represented by the word $Z^{({p,q})}_m=T^{({p,q})}_mS^{({p,q})}_m$ is equal to $|w^{({p,q})}_m|_X+c^{({p,q})}_m$ where $|c^{({p,q})}_m|=O(1)$ $\omega$-a.s.
\item[(Z$_2$)] The $X$-length of the element represented by $T^{({p,q})}_m$ is at least $D$, where $D$ is the constant in Lemma \ref{o}, $\omega$-a.s.
\end{itemize}
If $(p,q)\notin U_m$, then $Z^{({p,q})}_m$ is a power of $g_2$ such that
\begin{itemize}
\item[(Z$_3$)] The $X$-length of $Z^{({p,q})}_m$ is $(q-p)d_m+c^{({p,q})}_m$ where $|c^{({p,q})}_m|=O(1)$.
\end{itemize}
Moreover, it is not hard to see that the (not necessarily positive) constants $c^{({p,q})}_m$ and words $T^{({p,q})}_m$ can be chosen in such a way that for some constant $r>0$ we have:
\begin{itemize}
\item [(Z$_4$)] For every $m$, and every $(p,q)\in V_m$, the $X$-length of the element represented by $T^{({p,q})}_m$ is at most $r$.
\item [(Z$_5$)] For every $k=1,...,|V_m|$, $\left| \sum_{j=1}^k c^{(V_m^j)}_m\right|\le r$.
\end{itemize}

Let us consider the concatenation of all words $Z^{({p,q})}_m$, ${(p,q)}\in V_m$, counting the subintervals of $[0,a]$ from $V_m$ from left to right. The set of words obtained in this way will be denoted by ${\mathcal Z}_m$.  Denote by $\g_m$ the path in $\Cay (G,X)$ starting at $1$ and labeled by a word $Z_m\in{\mathcal Z}_m$. We parameterize it by length $\g_m\colon [0, l_m]\to \Cay (G,X)$. Observe that $\lio (l_m/d_m)=a$ by (\ref{lwpq}), (Z$_1$)--(Z$_3$), and (Z$_5$).

Let us define a limit path $\g \colon [0,a]\to \CG $ by the rule $$\g (t)=\lio \g_m (\min\{ td_m, l_m\})$$ for $t\in [0,a]$. It is easy to see that $\g $ is indeed a path, i.e., the map is continuous. Note also that by Lemma \ref{o} applied to the collection of peripheral subgroups $\{ H_1, \ldots , H_n, E(g_1), E(g_2)\}$ and by properties (Z$_1$)--(Z$_3$), $\g_m$ is $(\lambda,c)$-quasi-geodesic for every $m$, where the constants $\lambda , c$ are independent of $m$. This implies that $\g$ is a $\lambda$-bi-Lipschitz map, thus it defines an arc in $\CG$.
We want to show that, in fact, $\g $ is geodesic and its type is $\tau $.

Note that if for a simple path $\p$ is a tree-graded space, all nontrivial intersections with transversal trees and pieces are geodesic, then $\p$ is geodesic. Indeed let $\q$ be a geodesic connecting the endpoints of $\p$. Let $p_1, p_2, \ldots $ (respectively $q_1,q_2,\ldots $) be the list of all nontrivial intersections of $\p $ (respectively, $\q$) with pieces or  transversal trees. By part (c) of Lemma \ref{DS}, we have $p_i$ has the same endpoints as $q_i$ (up to changing the enumeration). Note that $l(\p )=\sum\limits_{i} l(p_i)$ and $l(\q )=\sum\limits_{i} l(q_i)$ as $\p \setminus (p_1\cup p_2\cup \ldots )$ has measure $0$ and similarly for $\q$. As every $p_i$ is geodesic, $l(p_i)=l(q_i)$. Therefore the above sums are equal, i.e. $l (\p )=l(\q)$. This implies that $p$ is geodesic.

It is straightforward to verify using (Z$_1$)--(Z$_5$), (\ref{lwpq}), and the definition of $\g $ that if $]p,q[\in U$ then the subpath $\g[p,q]$ is a geodesic inside a piece isometric to $Q(p,q)$ and there is an isometry that takes this piece to $Q(p,q)$ while taking $\g(p)$ to the point represented $(1)^\omega$, and $\g(q)$ to $w_{]p,q[}$. Similarly if an interval $]p,q[$ is a connected component of the complement $[0,a]\setminus U$, then $\g]p,q[$ is a geodesic inside a transversal tree of $\CG$ by Lemma \ref{ttr}. In particular, it intersects all pieces of $\CG$ trivially. Thus the only nontrivial (i.e., consisting of more than one point) intersections of $\g $ with pieces of $\CG$ are segments $\g[p,q]$.

The previous two paragraphs imply that the path $\g $ is geodesic and its type is $\tau $. Hence $\CG\setminus \{s\} $ has a limit connected component of type $\theta $. It remains to note that by Lemma \ref{action}, the cardinality  of the set of connected components of type $\theta$ is continuum.
\end{proof}

Corollaries \ref{cor1} and \ref{cor2} follow from the theorem and Lemma \ref{ble} immediately.

\section{Tree-graded asymptotic cones and the Continuum Hypothesis}

We start by recalling some ideas from \cite{KSTT}. Although the authors of \cite{KSTT} only deal with metric spaces arising from finitely generated groups, most of the theory generalizes to the general case without any changes.

Consider the first order language $\mathcal L$ (with equality) consisting of a constant symbol $e$ and countably many binary predicates $\{ R_r \}_{r\in \mathbb Q^+} $ indexed by positive rational numbers. We say that an $\mathcal L$-structure $A$ is a {\it KSTT structure} if $A$ satisfies the following axioms.
\begin{enumerate}
\item[(A$_1$)] $\forall \, r\in \mathbb Q^+$ $\forall \, x\in A$ $R_r(x,x)$.
\item[(A$_2$)] $\forall \, r\in \mathbb Q^+$ ($R_r(x,y)$ $\Rightarrow $ $R_r(y,x)$).
\item[(A$_3$)] $\forall $ $r,s\in \mathbb Q^+$ such that $r<s$ ($R_r(x,y)$ $\Rightarrow$ $R_s(x,y)$).
\item[(A$_4$)] $\forall $ $r,s\in \mathbb Q^+$ ($R_r(x,y)$ $\& $ $R_s(y,z)$ $\Rightarrow$ $R_{r+s}(x,z)$).
\end{enumerate}

To each KSTT structure $A$, one can canonically associate a pointed metric space $\mu (A)$ as follows. Let $$A_b=\{x\in A\mid \exists r>0\, A\models R_r(x,e)\}.$$ Let $\approx$ be the relation on $A_b$ defined by $$x\approx y\;\;\; \Leftrightarrow\;\;\; \forall r>0\, A\models R_r(a,b).$$ It is easy to check that $\approx $ is an equivalence relation. Let $\mu (A)=A_b/\approx $. We define a distance function on $\mu (A)$ by the rule $$\dist([x],[y])=\inf\{r\mid A\models R_r(x,y)\}.$$  Axioms (A$_1$)--(A$_4$) guarantee that $\dist$ is well-defined (i.e., is independent of the choice of representatives of the equivalence classes $[x]$ and $[y]$) and satisfies all properties of a metric.

Conversely, every metric space $M$ with a basepoint can be considered as a KSTT structure in a natural way. The universe of the structure is $M$,  $M\models R_r(x,y)$ iff $\dist (x,y)\le r$, and $e$ interprets as the fixed point.

The following two lemmas are essentially contained in \cite{KSTT}. The proofs are straightforward and we leave them to the reader. By $\cong $ we denote the isometry relation between metric spaces.

\begin{lem}\label{am1}
If two KSTT structures $A_1, A_2$ are isomorphic, then $\mu (A_1)\cong \mu (A_2)$.
\end{lem}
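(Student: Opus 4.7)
The plan is to verify directly that any $\mathcal{L}$-isomorphism $\phi\colon A_1\to A_2$ of KSTT structures descends to an isometry $\bar{\phi}\colon \mu(A_1)\to\mu(A_2)$. Since $\phi$ preserves the constant symbol $e$ and each binary predicate $R_r$, I have $\phi(e^{A_1})=e^{A_2}$ and $A_1\models R_r(x,y)$ if and only if $A_2\models R_r(\phi(x),\phi(y))$ for every $r\in\mathbb{Q}^+$ and every $x,y\in A_1$.

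First I would check that $\phi$ restricts to a bijection $(A_1)_b\to (A_2)_b$. Indeed, if $x\in (A_1)_b$ then $A_1\models R_r(x,e)$ for some $r>0$, so $A_2\models R_r(\phi(x),e)$ and hence $\phi(x)\in(A_2)_b$; applying the same argument to $\phi^{-1}$ gives the reverse inclusion. Next, $\phi$ respects the equivalence relation $\approx$: if $x\approx y$ in $A_1$, then $A_1\models R_r(x,y)$ for every rational $r>0$, whence $A_2\models R_r(\phi(x),\phi(y))$ for every such $r$, so $\phi(x)\approx\phi(y)$. Therefore $\phi$ induces a well-defined map $\bar{\phi}\colon \mu(A_1)\to\mu(A_2)$ by $[x]\mapsto[\phi(x)]$, and $\phi^{-1}$ induces a two-sided inverse of $\bar{\phi}$.

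To see that $\bar{\phi}$ is an isometry, I would invoke the definition $\dist([x],[y])=\inf\{r\in\mathbb{Q}^+\mid A\models R_r(x,y)\}$. Because $\phi$ preserves every $R_r$ in both directions, the sets $\{r\in\mathbb{Q}^+\mid A_1\models R_r(x,y)\}$ and $\{r\in\mathbb{Q}^+\mid A_2\models R_r(\phi(x),\phi(y))\}$ coincide, so their infima agree, giving $\dist([\phi(x)],[\phi(y)])=\dist([x],[y])$.

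There is really no obstacle here: the axioms (A$_1$)--(A$_4$) are used only to ensure that $\mu(A_i)$ is a well-defined metric space (via Lemma setup preceding the statement), and the verification of the isometry property never appeals to anything beyond the fact that an isomorphism of $\mathcal{L}$-structures preserves all atomic formulas. The one point worth being careful about is that the basepoint $e$ is built into the signature, so the isometry $\bar{\phi}$ is automatically basepoint-preserving, which is the notion of isomorphism appropriate to pointed metric spaces implicitly used throughout the paper.
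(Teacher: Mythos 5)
Your proof is correct and is exactly the straightforward verification the paper has in mind (the authors explicitly leave this lemma to the reader as an easy exercise): an $\mathcal L$-isomorphism preserves $e$ and each $R_r$, hence restricts to a bijection of the bounded parts, respects $\approx$, and preserves the infima defining the distance. Nothing further is needed.
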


\begin{lem}\label{am2}
Let $S$ be a metric space, $\omega $ an ultrafilter, $d=(d_i)$ a scaling sequence, $o=(o_i)$ a sequence of observation points. Then $\Con ^\omega (S, d,o)=\mu (\prod ^\omega S_i)$, where $S_i$ is the metric space $(S, \frac1{d_i} \dist )$ with fixed point $o_i$ considered as a KSTT structure.
\end{lem}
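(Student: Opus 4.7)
The proof will proceed by unwinding the definitions on both sides and verifying that they produce the same pointed metric space. The key observation is that the scaling already gets baked into the structures $S_i$, so each $S_i$ carries its own metric $\tfrac{1}{d_i}\dist$ and distinguished point $o_i$, and hence the KSTT relations $R_r$ on $S_i$ are $S_i\models R_r(x,y) \Leftrightarrow \dist(x,y)\le rd_i$, with $o_i$ interpreting the constant $e$.

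First I would describe the universe of $\prod^\omega S_i$. By the usual convention, its elements are equivalence classes $[(x_i)]^\omega$ of sequences in $S$ under pointwise $\omega$-equality. By \L{}o\'s's theorem (which amounts, for the atomic formulas $R_r$, to the defining property of the ultralimit), one has
$$\prod^\omega S_i \models R_r([(x_i)^\omega],[(y_i)^\omega]) \iff \{i : \dist(x_i,y_i)\le rd_i\}\in\omega \iff \lio \tfrac{\dist(x_i,y_i)}{d_i}\le r,$$
and similarly the constant $e$ is interpreted by $[(o_i)^\omega]$. Thus membership in the bounded part $(\prod^\omega S_i)_b$ is the condition $\exists r>0$ with $\lio \dist(x_i,o_i)/d_i\le r$, which is precisely the defining condition of the set $\mathcal B$ in the asymptotic cone construction.

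Next I would match the equivalence relations and the metrics. The relation $\approx$ on the bounded part reads $\forall r>0\ \lio \dist(x_i,y_i)/d_i\le r$, i.e.\ $\lio \dist(x_i,y_i)/d_i=0$, which is exactly the equivalence on $\mathcal B$ used to form $\Con^\omega(S,d,o)$. Hence the underlying sets of $\mu(\prod^\omega S_i)$ and $\Con^\omega(S,d,o)$ are naturally identified via $[(x_i)^\omega]\mapsto [(x_i)^\omega]$. For the metric, the KSTT distance is
$$\dist_\mu([(x_i)],[(y_i)]) = \inf\{r\in\Q^+ : \lio \dist(x_i,y_i)/d_i\le r\} = \lio \dist(x_i,y_i)/d_i,$$
which coincides with the asymptotic cone metric.

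The argument is almost purely definitional, so there is no real obstacle. The only subtlety worth flagging is that the predicates $R_r$ are indexed by positive \emph{rationals}, so the identification $\inf\{r\in\Q^+ : \lio\dist/d_i\le r\}=\lio\dist/d_i$ uses density of $\Q$ in $\R$; this is routine. I would close by noting that the basepoint $e$ of $\mu(\prod^\omega S_i)$ is the equivalence class $[(o_i)^\omega]$, matching the basepoint of $\Con^\omega(S,d,o)$, so the identification is a pointed isometry.
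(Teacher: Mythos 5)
Your proof is correct and is exactly the routine definitional verification that the paper itself omits (it states that Lemmas \ref{am1} and \ref{am2} are ``essentially contained in \cite{KSTT}'' and leaves the straightforward proofs to the reader). One cosmetic point: the displayed biconditional $\{i:\dist(x_i,y_i)\le rd_i\}\in\omega\iff\lio \dist(x_i,y_i)/d_i\le r$ fails from right to left at the boundary (e.g.\ $\dist(x_i,y_i)/d_i=r+1/i$ has $\omega$-limit $r$ but the set is empty), yet each place you invoke it needs only the true forward direction or a strict-inequality variant, so the identification of the bounded part, of $\approx$, and of the two metrics goes through unchanged.
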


Let $M$ be a metric space, $\omega$ an ultrafilter, and $(o_i)$ a sequence of points $o_i\in M$. Recall that the $\omega$-limit of $M$ with respect to $(o_i)$, denoted by $\lio (M, o_i)$, is defined in the same way as the asymptotic cone of $M$ but with all scaling constants equal to $1$. If all $o_i$ are the same, i.e. $o=o_i, i=1,2,...$, the $\omega$-limit does not depend (as a metric space without basepoint) on the choice of $o$, and we denote it by $\lio(M)$.

It is a standard fact (see \cite[Corollary 3.24]{DS}) that any $\omega$-limit of an asymptotic cone of a metric space is again an asymptotic cone of that space. The next lemma shows that, assuming Continuum Hypothesis, we do not get new asymptotic cones this way. (This lemma was presented as a remark in \cite{DMS} without a proof.)

\begin{lem}[Assuming CH is true]\label{ch}
Let $C=\Con ^\rho (S,(d_i),(o_i))$ be an asymptotic cone of a metric space $S$ of cardinality at most continuum. Then for every ultrafilter $\omega $, the $\omega$-limit $\lio C$ is isometric to $C$.
\end{lem}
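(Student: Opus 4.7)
The plan is to combine the KSTT framework of Lemmas~\ref{am1} and~\ref{am2} with the uniqueness of saturated models under CH. Fix a basepoint $e\in C$ and view $C$ as a KSTT structure in the language $\mathcal{L}$. Because $C$ is a metric space, every two points are at finite distance and distinct points are at positive distance, so $\mu(C)=C$. Applying Lemma~\ref{am2} with the constant sequence $C_i=C$ and basepoint $e$ yields $\lio C=\mu\bigl(\prod^\omega C\bigr)$, where $\prod^\omega C$ is the $\mathcal{L}$-ultrapower. By the theorem of {\L}o\'{s}, $\prod^\omega C\equiv C$ as $\mathcal{L}$-structures. It therefore suffices to produce an $\mathcal{L}$-isomorphism $C\cong\prod^\omega C$: this will automatically fix the constant symbol $e$, and Lemma~\ref{am1} will then deliver the desired isometry $C=\mu(C)\cong\mu\bigl(\prod^\omega C\bigr)=\lio C$.

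The model-theoretic core is that both $C$ and $\prod^\omega C$ are $\aleph_1$-saturated $\mathcal{L}$-structures. For $\prod^\omega C$ this is the classical fact that any countably incomplete ultraproduct in a countable language is $\aleph_1$-saturated, applied to the non-principal ultrafilter $\omega$ on $\mathbb{N}$. For $C$ itself the same argument works with $\omega$ replaced by the ultrafilter $\rho$ from the statement: $C$ is the $\mu$-quotient of the ultraproduct $\prod^\rho S_i$ of the rescaled copies of $S$, and a countable consistent partial type over $C$ lifts to a countable consistent type over $\prod^\rho S_i$, all of whose parameters lie in the bounded part. The standard diagonal realization along $\rho$ produces a witness in $\prod^\rho S_i$, and because every parameter is already at finite distance from $e$, the triangle inequality forces the witness into the bounded part, so it descends to a realization in $C$.

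Under CH we have $2^{\aleph_0}=\aleph_1$, and consequently $|C|\le\aleph_1$ and $|\prod^\omega C|\le|C|^{\aleph_0}\le\aleph_1$. The classical uniqueness theorem for saturated models of a fixed complete theory at a fixed cardinality $\kappa=\aleph_1$ then produces the required $\mathcal{L}$-isomorphism $C\cong\prod^\omega C$: a back-and-forth construction exhausts both structures in $\aleph_1$ steps, fed by $\aleph_1$-saturation on each side. In the degenerate cases where $C$ is finite or countable, the ultrapower simplifies directly and the claim is verified by inspection, since the equivalence classes in $\prod^\omega C$ at finite distance from $(e,e,\ldots)^\omega$ are parametrized by $\omega$-limits of elements of $C$. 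The main obstacle I anticipate is the verification of $\aleph_1$-saturation for the metric quotient $C$ itself rather than for the raw ultraproduct $\prod^\rho S_i$, since a priori the operation $\mu$ could destroy saturation; as sketched above this does not happen because boundedness of parameters confines every candidate witness to the bounded part automatically.
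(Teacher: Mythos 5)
Your reduction to Lemmas~\ref{am1} and~\ref{am2} and to the uniqueness of saturated models is the right general idea, but the specific plan of producing an $\mathcal{L}$-isomorphism $C\cong\prod^\omega C$ cannot work, and the step you flagged as the anticipated obstacle is exactly where it breaks. The structure $C=\mu(A)$ is essentially never $\aleph_1$-saturated: the $1$-type $p(x)=\{\neg R_n(x,e)\mid n\in\mathbb N\}$ is finitely satisfiable in any unbounded $C$ (take points farther and farther from $e$), but it has no realization, since every point of a metric space is at finite distance from the basepoint. Likewise the $2$-type $\{x\ne y\}\cup\{R_{1/n}(x,y)\mid n\in\mathbb N\}$ is finitely satisfiable in any non-discrete $C$ but omitted, while it is realized in $\prod^\omega C$. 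So $\mu$ \emph{does} destroy saturation, precisely by discarding the unbounded part and collapsing infinitesimals, and consequently $C$ and $\prod^\omega C$ are genuinely non-isomorphic $\mathcal{L}$-structures (one realizes $p$, the other omits it) whenever $C$ is unbounded --- which is the case for every asymptotic cone of interest. Your proposed repair --- ``the parameters are at finite distance from $e$, so the triangle inequality forces the witness into the bounded part'' --- fails for exactly these types: a finitely satisfiable type can force its witness arbitrarily far from all of its parameters. (There is also a secondary problem: the quotient map $A_b\to\mu(A)$ is not elementary --- $R_r$ is only preserved up to $\varepsilon$ --- so ``lifting a type over $C$ to a type over $\prod^\rho S_i$'' is not even well defined.)

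The way to avoid this is to never compare $C$ with its own ultrapower, but to stay at the level of raw ultraproducts, which is what the paper does. Set $A=\prod^\rho S_i$, so that $C=\mu(A)$. Both $A$ and $\prod^\omega A$ are countably incomplete ultraproducts in a countable language, hence $\aleph_1$-saturated of cardinality $2^{\aleph_0}$; under CH they are saturated, elementarily equivalent by {\L}o\'s, and therefore isomorphic, giving $\mu(\prod^\omega A)\cong\mu(A)=C$ via Lemma~\ref{am1}. The price of this detour is an extra identification that your plan does not supply: one must show that $\mu\bigl(\prod^\omega A\bigr)$ is isometric to $\mu\bigl(\prod^\omega\mu(A)\bigr)=\lio C$, i.e., that taking the metric quotient before or after the second ultrapower gives the same space. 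The paper proves this by an explicit computation with the predicates $R_r$, checking that $(x_i)^\omega\mapsto([x_i])^\omega$ carries bounded parts to bounded parts, respects $\approx$, and preserves distances. You would need to add both this identification and the substitution of $A$ for $C$ throughout your saturation argument to obtain a correct proof.
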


\proof
Let us denote by $A$ the ultraproduct $\prod^\rho S_i$ of the metric spaces $S_i=(S, \frac1{d_i} \dist )$ with fixed points $o_i\in S_i$ considered as KSTT structures. Recall that $e$ interprets as $o=(o_i)^\rho $ in $A$. To each $x=(x_i)^\omega \in \prod^\omega A$, we associate the element $\bar x =([x_i])^\omega \in \prod^\omega \mu (A)$.  Observe that for every $r\in \mathbb Q^+$, we have
$$
\begin{array}{rl}
A \models R_r(x,e)\;\Leftrightarrow \;& \omega (\{ i\in \mathbb N\mid A\models R_r(x_i, o_i)  \} )=1 \;\Leftrightarrow\;\\&\\
& \omega (\{ i\in \mathbb N\mid \dist _{\mu (A)} (x_i, o_i)\le r  \} )=1 \;\Leftrightarrow\;\\&\\
& \omega (\{ i\in \mathbb N\mid \mu (A) \models R_r([x_i], [o_i])  \} )=1 \;\Leftrightarrow\;\\&\\
& \prod^\omega \mu (A) \models R_r (\bar x, e)
\end{array}
$$
This implies that the map $x\mapsto \bar x$ that takes $\left(\prod ^\omega A\right)_b$ to $\left(\prod ^\omega \mu (A)\right)_b$, preserves the equivalence relation $\approx $, and induces an isometry $\mu \left(\prod^\omega A\right)\to \mu \left(\prod^\omega \mu(A)\right)$. Furthermore, by Lemma \ref{am2}  $\mu \left(\prod ^\omega \mu(A)\right)=\lio C$.  Hence
\begin{equation}\label{ch1}
\mu \left(\prod ^\omega A\right)\cong\lio C.
\end{equation}

Finally we note that every ultraproduct with respect to a
non-principal ultrafilter is $\aleph_1$-saturated \cite{CK} and has
cardinality $2^{\aleph_0}$. Thus if CH holds it is saturated.
Therefore $\prod ^\omega A$ and $A=\prod ^\rho S_i$ are both
saturated (since the cardinality of $A$ is at most continuum). By
the \L o$\acute{s}$ Theorem $A$ and $\prod ^\omega A $ are
elementary equivalent. Recall that every two elementary equivalent
saturated models having the same cardinality are isomorphic
\cite{CK}. Thus $\prod ^\omega A$ is isomorphic to $A$. Now Lemma
\ref{am1} implies that $\mu \left(\prod ^\omega A\right) \cong \mu
(A)$. Comparing this to (\ref{ch1}) and applying Lemma \ref{am2}
again, we conclude that $\lio C\cong \mu (A)=C$.
\endproof

\begin{rem}
Note that even if CH holds, the natural isometric embedding $C\to
\lim^\omega(C)$ is not surjective in general.
\end{rem}

The next lemma follows from \cite[Proposition 3.20]{DMS} and  \cite[Theorem 3.30]{DS}.

\begin{lem}\label{dms} Let $S$ be a homogeneous tree-graded metric space. Then every ultralimit $\lio S$ has a tree-graded structure with a non-trivial transversal tree at every point. Moreover every piece of $\lio S$ is naturally isometric to $\lio (P_i,o_i)$ for some sequence $(P_i,o_i)$, where $P_i$ are pieces of $S$ and $o_i$ are observation points.
\end{lem}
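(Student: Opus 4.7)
My plan is to verify both assertions directly by working with representative sequences. First, for the tree-graded structure on $\lio S$, I would take as candidate pieces the collection $\mathcal P'$ of all ultralimits $\lio(P_i,o_i)$ where $(P_i)$ is a sequence of pieces of $S$ and $(o_i)$ is a basepoint sequence with $o_i\in P_i$ and $(o_i)^\omega \in \lio S$. Each such ultralimit is automatically a closed geodesic subspace of $\lio S$ since $S$ is complete and each $P_i$ is closed and geodesic; so it remains to check axioms $(T_1)$ and $(T_2)$ for $\mathcal P'$.

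Property $(T_1)$ would be verified by contradiction: if two elements of $\mathcal P'$ share two distinct points $x\ne y$, then $x$ and $y$ admit simultaneous representatives $(x_i),(y_i)$ with $x_i,y_i\in P_i$ and $(x'_i),(y'_i)$ with $x'_i,y'_i\in P'_i$, so that $\lio d(x_i,x'_i)=\lio d(y_i,y'_i)=0$. For $\omega$-a.s. $i$ the pieces $P_i$ and $P'_i$ then share points at distance arbitrarily close to $d_{\lio S}(x,y)>0$, and property $(T_1)$ in $S$ forces $P_i=P'_i$ $\omega$-a.s., so the two ultralimit pieces coincide as subsets of $\lio S$. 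Property $(T_2)$ would be checked by representing any simple geodesic triangle $\Delta$ in $\lio S$ as an $\omega$-limit of triangles $\Delta_i$ in $S$ and invoking Lemma \ref{ttg}: each $\Delta_i$ decomposes into three outer pinches meeting at distinguished vertices $A_i,B_i,C_i$ together with a middle subtriangle contained in a single piece $P_i$. If any pinch had length bounded away from $0$ along an $\omega$-large set, the $\omega$-limit triangle would fail to be simple (a side of $\Delta$ and the reverse of an adjacent side would coincide on a positive-length interval), so the pinches collapse in the limit and $\Delta$ lies in $\lio(P_i,\cdot)\in\mathcal P'$.

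For the non-triviality of transversal trees, by homogeneity of $\lio S$ (inherited from that of $S$) it suffices to exhibit a single non-trivial transversal arc at one point $o=(o_i)^\omega$. Using homogeneity of $S$ together with the non-trivial tree-graded structure (existence of cut points), one constructs for each $i$ a geodesic arc $\gamma_i$ of uniform positive length based at $o_i$ whose intersection with every piece of $S$ is empty or a single point---for example by letting $\gamma_i$ traverse a short chain of successively distinct pieces meeting only at cut points. The $\omega$-limit $\gamma=\lio\gamma_i$ is then a non-degenerate geodesic in $\lio S$ starting at $o$ which intersects every piece of $\lio S$ in at most a single point, so the transversal tree at $o$ is non-trivial.

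The main obstacle is the construction of these uniform-length transversal arcs $\gamma_i$ in $S$ at arbitrary basepoints: this step requires a careful interaction of homogeneity with the cut-point structure of $S$, and is precisely the content of \cite[Proposition 3.20]{DMS} and \cite[Theorem 3.30]{DS} cited in the statement.
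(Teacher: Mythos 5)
The paper itself gives no argument here: it simply cites \cite[Proposition 3.20]{DMS} and \cite[Theorem 3.30]{DS}, which contain exactly the two halves of the statement (the non-trivial transversal trees and the identification of pieces with ultralimits of pieces). Your attempt to reprove these results directly is the natural strategy, and your choice of candidate pieces is the right one, but two of your verifications have real gaps as written.

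For $(T_1)$: from $\lio d(x_i,x_i')=\lio d(y_i,y_i')=0$ you conclude that $P_i$ and $P_i'$ ``share points'' and then invoke $(T_1)$ in $S$ to force $P_i=P_i'$. But the pieces need not share any points -- they are merely close at two far-apart locations -- so $(T_1)$ of $S$ says nothing. What you actually need is the stronger separation property of tree-graded spaces (the projection of one piece onto a distinct piece is a single point, so every path from $P_i'$ to $P_i$ passes through a fixed gate $p_i\in P_i$); then $d(x_i,y_i)\le d(x_i,p_i)+d(p_i,y_i)\le d(x_i,x_i')+d(y_i,y_i')\to 0$ contradicts $d(x,y)>0$. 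For $(T_2)$: you assume a given simple geodesic triangle $\Delta$ in $\lio S$ can be represented as an $\omega$-limit of geodesic triangles $\Delta_i$ in $S$. This is false in general: the ultralimit of geodesics $[a_i,b_i]$ is \emph{a} geodesic between the limit vertices, but need not be the particular side of $\Delta$ you started with, so the collapse of the pinches of $\Delta_i$ only shows that \emph{some} geodesic triangle on the vertices of $\Delta$ lies in a limit piece, not that $\Delta$ does. Bridging this (e.g.\ via Lemma \ref{DS}(c), comparing $\Delta$ with the limit triangle piece by piece, or via the alternative characterizations of tree-graded spaces used in \cite{DS}) is the substantive part of \cite[Theorem 3.30]{DS}. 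Finally, your transversal arc is described inconsistently: an arc in $S$ traversing a chain of pieces does \emph{not} meet each piece of $S$ in at most one point; the correct statement is that if $\gamma_i$ crosses $N_i\to\infty$ pieces each contributing length $o(1)$, then the \emph{limit} arc meets each limit piece in at most a point. Producing such chains at an arbitrary basepoint from homogeneity and the existence of cut points is the content of \cite[Proposition 3.20]{DMS}, which you correctly identify as the remaining input.
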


\begin{thm} [Assuming CH is true]\label{ttt5}
Let $\free $ be an asymptotic cone of a geodesic metric space of cardinality at most continuum.
Suppose that $\free $ is homogeneous and has cut points. Then $\free $ is a universal $\qq$-tree, where $\qq$ consists of representatives of isometry classes of maximal connected subspaces without cut points in $\free $.
\end{thm}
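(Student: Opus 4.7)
My plan is to leverage the self-similarity of $\free$ under CH, reducing the universality conditions to a counting argument inside the $\omega$-limit.

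First, since $\free$ is homogeneous and contains at least one cut point, every point of $\free$ is a cut point. By \cite[Lemma 2.31]{DS}, the collection of maximal connected subspaces of $\free$ without cut points endows $\free$ with its canonical tree-graded structure, and with $\qq$ as in the statement this realizes $\free$ as a $\qq$-tree in the sense of Definition \ref{def1}. Condition (a) of Definition \ref{defu} (completeness) is automatic for an asymptotic cone, and by homogeneity it suffices to verify conditions (b) and (c) at a single fixed point $s\in\free$.

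Next, I would invoke the self-similarity provided by Lemma \ref{ch}: under CH there is an isometry $\Psi\colon\lio\free\to\free$ sending the diagonal class $\widetilde s=[(s,s,\ldots)]$ to $s$. By Lemma \ref{dms}, the ultralimit $\lio\free$ carries a tree-graded structure in which every transversal tree is non-trivial and every piece has the form $\lio(P_i,o_i)$ for a sequence of pieces $P_i$ of $\free$ and observation points $o_i\in P_i$. A reconciliation step, exploiting homogeneity of $\free$ and the fact that pieces of $\free$ have no cut points (hence neither do their ultralimits), shows that this inherited structure agrees with the natural one on $\lio\free$, so $\Psi$ is an isomorphism of $\qq$-trees. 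It is therefore enough to exhibit continuum many pieces and limit components of each prescribed type at $\widetilde s$.

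For (c), given $Q\in\qq$ and $x\in E(Q)$, the definition of $\qq$ together with homogeneity of $\free$ furnishes a $(Q,x)$-piece $P_0$ at $s$. For each sequence $\bar u=(u_i)$ of points of $P_0$ lying in the $\mathrm{Isom}(Q)$-orbit of $x$ and converging to $s$, choose isometries $g_i\in\mathrm{Isom}(\free)$ with $g_i(u_i)=s$; then each $g_iP_0$ is a $(Q,x)$-piece at $s$ in $\free$, and $\lio(g_iP_0,s)$ is a $(Q,x)$-piece at $\widetilde s$ in $\lio\free$. Letting $\bar u$ range over an almost disjoint family of subsets of $\mathbb{N}$ of cardinality $2^{\aleph_0}$ (encoded as sequences in $P_0$), one obtains continuum many pairwise $\omega$-inequivalent sequences, hence continuum many distinct $(Q,x)$-pieces at $\widetilde s$; pulling back through $\Psi$ proves (c). Condition (b) is treated analogously: for a non-trivial type $\theta$, one fixes a geodesic at $s$ of type $\theta$ (which exists by the construction of the type and the homogeneity of $\free$) and transports it along independent sequences converging to $s$ to manufacture continuum many pairwise distinct limit components of type $\theta$ at $\widetilde s$; for the trivial type, the non-trivial transversal $\mathbb{R}$-tree at $\widetilde s$ supplied by Lemma \ref{dms} contains continuum many directions and thus furnishes the required trivial-type components.

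The main obstacle is the cardinality count. The naive ultrapower of a finite set over a non-principal ultrafilter collapses to its original cardinality, so one cannot simply branch between finitely many alternatives; the trick is to exploit a continuum-sized reservoir of choices inside $P_0$ and a standard almost disjoint family of subsets of $\mathbb{N}$ to produce continuum many $\omega$-inequivalent sequences. A secondary subtlety is the identification of the inherited and natural tree-graded structures on $\lio\free$, which is handled by the reconciliation step above via homogeneity and the no-cut-points property of pieces.
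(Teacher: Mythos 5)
Your high-level skeleton --- reduce to $\lio\free$ via Lemma \ref{ch}, invoke Lemma \ref{dms}, and verify conditions (b) and (c) of Definition \ref{defu} by a counting argument --- matches the paper's, but both counting arguments have genuine gaps. For (c), you build the candidate pieces $\lio(g_iP_0,s)$ out of pieces $g_iP_0$ of $\free$ that all \emph{already contain} $s$. First, $\lio(g_iP_0,s)$ is isometric to an ultralimit of $Q$ with moving basepoints, which need not be isometric to $Q$, so it is not clear you obtain $(Q,x)$-pieces at all. Second, and more seriously, distinct $\omega$-classes of sequences $(u_i)$ in $P_0$ need not yield distinct pieces: if every $g_iP_0$ happens to equal $P_0$ (you cannot rule this out --- a priori $\free$ might have only one $Q$-piece at $s$, and that is exactly what must be disproved), then $\lio(g_iP_0,s)=\lio(P_0,s)$ for every choice of $\bar u$, and your almost-disjoint family produces a single piece. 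The paper's construction differs precisely where yours breaks: it chooses continuum many $\omega$-inequivalent sequences $(a_i)$ of attachment points inside the \emph{transversal tree} $T_o$ of $\free$ converging to $o$ (non-trivial by Lemmas \ref{dms} and \ref{ch}), translates the given sequence of pieces $Q_i$ (with $(Q,x)=\lio(Q_i,o)$, so the isometry type of the ultralimit is preserved exactly) by isometries $\alpha_i$ with $\alpha_i(o)=a_i$, and then observes that pieces attached at distinct points of $T_o$ remain a definite distance apart away from the attachment points, hence have distinct ultralimits. Your proposal never uses the non-triviality of the transversal tree for (c), and without it the count cannot go through.

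For (b) there is a second, independent gap: you assert that a geodesic at $s$ of an arbitrary prescribed type $\theta\in\Theta$ ``exists by the construction of the type and the homogeneity of $\free$.'' It does not: $\Theta$ ranges over \emph{all} abstract $\qq$-types, and realizing each of them is the hardest step of the proof. The paper does this by adapting the explicit construction from the proof of Theorem \ref{rhg}, concatenating paths inside pieces with paths inside transversal trees according to the combinatorial data $(U,f)$, checking that the concatenation is (the limit of) a uniform quasi-geodesic, and verifying its type. Likewise, for the trivial type you claim the transversal tree at $\widetilde s$ has ``continuum many directions,'' but Lemma \ref{dms} only gives non-triviality. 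So while the reduction to $\lio\free$ is correct, both verification steps need to be replaced by arguments along the lines of the paper's.
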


\begin{proof}

Recall that every space with cut points is tree-graded with respect
to maximal connected subsets without cut points. Hence $\free $ is a
$\qq$-tree.  As every asymptotic cone is complete, we only have to
check conditions (b) and (c) in Definition \ref{defu}. Since we
assume CH, by Lemma \ref{ch}, $\free$ is isometric to $\free_1=\lio
\free $. Hence it suffices to verify these conditions for $\free
_1$.

Let us start with (c). We fix some $x\in \free _1$. Without loss of
generality we can assume that $x$ is the $\omega $-limit of a
constant sequence $(o,o, \ldots )$ for some $o\in \free $. Let $Q$
be a piece of $\free _1$ containing $x$. By Lemma \ref{dms},
$(Q,x)=\lio (Q_i,o_i)$ for some pieces $Q_i$ of $\free $ and
observation points $o_i\in Q_i$. Again without loss of generality we
can assume that $o_i=o $ for all $i$.

Let $(a_i)$, $(b_i)$ be sequences of elements of $\free $ such that
$\lio a_i=\lio b_i=x$ and $a_i,b_i\in T_o$. Let $\alpha _i$ and
$\beta _i$ be some isometries of $\free $ such that $\alpha_i
(o)=a_i$, $\beta _i(o)=b_i$. Finally let $(A,x)=\lio (\alpha
_i(Q_i), a_i)$ and $(B,x)=\lio (\beta _i (Q_i), b_i)$. Clearly
$(A,x)$ and $(B,x)$ are pieces isometric to $(Q,x)$. Observe that if
$(a_i)^\omega \ne (b_i)^\omega $, then $A\ne B$. Indeed let $p=\lio
p_i$ be a point in $A$ such that $d(p, x)>\e>0$. Then $d(p_i,
\beta_i (Q_i))=d(p_i,b_i)>\e $ \oas. Hence $A\ne B$. Since $T_o$ is
nontrivial by Lemmas \ref{dms} and \ref{ch}, this implies part (c)
of Definition \ref{defu}.

Arguing as in the proof of Theorem \ref{rhg}, one can show that
every $\theta\in \Theta $ is realized as the type of a limit
connected component at some point of $\free _1$. One has to use
paths in pieces of $\free $ (respectively, in transversal trees)
instead of words in the alphabets $Y_i$ (respectively, powers of
$f_1$ and $f_2$). Finally, the same proof as for pieces, shows that
there are uncountably many connected components of type $\theta $ at
every point.
\end{proof}

Note that even without CH the proof of Theorem \ref{ttt5} together
with \cite[Corollary 3.24]{DS} still implies that {\em some}
asymptotic cone (probably with respect to a different ultrafilter)
is a universal $\qq$-tree.

\end{document}